\def\input@path{{figures/}}\makeatother
\newtheorem{theorem}{Theorem}%[section]
\newtheorem{proposition}[theorem]{Proposition}
\newtheorem{lemma}[theorem]{Lemma}
\newtheorem*{theorem*}{Theorem}%[section]
\theoremstyle{definition}
\newtheorem{definition}[theorem]{Definition}
\newtheorem{example}[theorem]{Example}
\newtheorem{remark}[theorem]{Remark}
\newtheorem{problem}[theorem]{Problem}
\newtheorem{notation}[theorem]{Notation}
\crefname{notation}{Notation}{Notations}
\crefname{problem}{Problem}{Problems}
\newcommand{\R}{\mathbb{R}} % reals
\newcommand{\N}{\mathbb{N}} % naturals
\newcommand{\HH}{\mathbb{H}} % hyperplane
\renewcommand{\b}[1]{{\boldsymbol{#1}}} % bold letters
\newcommand{\set}[2]{\left\{ #1 \;\middle|\; #2 \right\}} % set notation
\newcommand{\ssm}{\smallsetminus} % small set minus
\newcommand{\dotprod}[2]{\left\langle \, #1 \; \middle| \; #2 \, \right\rangle} % dot product
\newcommand{\one}{\b{1}} % the all one vector
\newcommand{\eqdef}{\mbox{\,\raisebox{0.2ex}{\scriptsize\ensuremath{\mathrm:}}\ensuremath{=}\,}} % :=
\DeclareMathOperator{\cone}{cone} % cone hull
\newcommand{\ie}{\textit{i.e.}~} % id est
\newcommand{\aka}{\textit{a.k.a.}~} % also known as
\definecolor{darkblue}{rgb}{0,0,0.7} % darkblue color
\definecolor{green}{RGB}{57,181,74} % darkblue color
\definecolor{violet}{RGB}{147,39,143} % darkblue color
\newcommand{\darkblue}{\color{darkblue}} % darkblue command
\newcommand{\defn}[1]{\textsl{\darkblue #1}} % emphasis of a definition
\newcommand{\para}[1]{\bigskip\noindent\uline{#1.}} % paragraph
\newcommand{\OEIS}[1]{{\rm \href{http://oeis.org/#1}{\texttt{#1}}}}
\newcommand{\imagetop}[1]{\vtop{\null\hbox{#1}}} % image aligned top
\newcommandx{\projDown}[1][1={}]{\smash{\pi_\downarrow^{#1}}} % down projection map
\newcommandx{\projUp}[1][1={}]{\smash{\pi^\uparrow_{#1}}} % up projection map
\newcommandx{\ray}[1][1=J]{\b{r}_{#1}} % ray
\newcommandx{\rays}[1][1=J]{\b{R}_{#1}} % ray
\newcommandx{\Fan}[1][1=]{\mathcal{F}_{#1}} % fan
\newcommand{\polytope}[1]{\mathds{#1}} % font polytope
\newcommand{\Cone}{\polytope{C}} % cone
\newcommandx{\pebbleColors}[1][1=\Gamma]{#1} % color set
\newcommandx{\pebbleColor}[1][1=\gamma]{#1} % color
\newcommandx{\pebbleNumber}[2][1=S, 2=\pebbleColor]{\pi_{#2}(#1)} % pebble number
\newcommandx{\leaves}[1][1=S]{\mathsf{L}(#1)} % leaves
\newcommandx{\leavesNumber}[1][1=S]{\ell(#1)} % leaves number
\newcommandx{\pebbleDefault}[2][1=S, 2=\pebbleColor]{\Delta_{#2}(#1)} % pebble default
\newcommandx{\balanced}[1][1=S]{\mathsf{B}(#1)} % balanced
\newcommandx{\unbalanced}[1][1=S]{\mathsf{U}(#1)} % unbalanced
\newcommandx{\pebbleTrees}[2][1=\ell,2={b,u}]{\smash{\mathcal{P}_{#1}^{#2}}} % pebble trees
\newcommandx{\pebbleTreePoset}[2][1=\ell,2={b,u}]{\smash{\mathcal{P\!P}_{#1}^{#2}}} % pebble tree poset
\newcommandx{\pebbleTreeComplex}[2][1=\ell,2={b,u}]{\smash{\mathcal{PC}_{#1}^{#2}}} % pebble tree complex
\newcommandx{\pebbleTreeFan}[2][1=\ell,2={b,u}]{\smash{\mathcal{P\!F}_{#1}^{#2}}} % pebble tree fan
\newcommandx{\pebbleTreeGeneratingFunction}[1][1={b,u}]{\mathfrak{P}^{#1}} % pebble tree generating function
\newcommandx{\maximalPebbleTreeGeneratingFunction}[1][1={b,u}]{\mathfrak{M}^{#1}} % maximal pebble tree generating function
\newcommandx{\pebbleTreePolytope}[2][1=\ell,2={b,u}]{\polytope{PTP}_{#1}^{#2}} % pebble tree polytope
\tikzset{
 arrowdown/.style={postaction={decorate,decoration={
        markings,
        mark=at position .7 with {\arrow{>}}
      }}},
 arrowup/.style={postaction={decorate,decoration={
        markings,
        mark=at position .7 with {\arrow{<}}
      }}}
}
\newcommandx{\pebbleTree}[3][2=7pt, 3=3pt]{
	\imagetop{
		\begin{forest}
		fairly nice empty nodes,
		for tree={l=#2, l sep=#2, inner sep=1pt, s sep=#3} #1
		\end{forest}
	}
}
\newcommand{\leaf}{\phantom{$\circ$}} % leaf in pebble trees
\newcommand{\yinyang}{%
    \begin{tikzpicture}[scale=0.45]
      \draw[line width = 0.05ex,transform canvas={yshift=0.12ex}] (0,0) circle (1ex);
      \path[fill=black,transform canvas={yshift=0.12ex}] (90:1ex) arc (90:-90:0.5ex)
                        (0,0)    arc (90:270:0.5ex)
                        (0,-1ex) arc (-90:-270:1ex);
    \end{tikzpicture}}
\def\l@part{\@tocline{1}{8pt}{0pc}{}{}}
\def\l@section{\@tocline{1}{4pt}{0pc}{}{}}
\let\oldtocpart=\tocpart
\renewcommand{\tocpart}[2]{\sc\large\oldtocpart{#1}{#2}}
\let\oldtocsection=\tocsection
\renewcommand{\tocsection}[2]{\bf\oldtocsection{#1}{#2}}
\let\oldtocsubsubsection=\tocsubsubsection
\renewcommand{\tocsubsubsection}[2]{\quad\oldtocsubsubsection{#1}{#2}}
\title{Pebble trees}
\thanks{Partially supported by the Spanish grant PID2022-137283NB-C21 of MCIN/AEI/10.13039/501100011033 / FEDER, UE, by Departament de Recerca i Universitats de la Generalitat de Catalunya (2021 SGR 00697), by the French grant CHARMS (ANR-19-CE40-0017), and by the French--Austrian project PAGCAP (ANR-21-CE48-0020 \& FWF I 5788).}
\author{Vincent Pilaud}
\address{Universitat de Barcelona}
\email{vincent.pilaud@ub.edu}
\urladdr{\url{https://www.ub.edu/comb/vincentpilaud/}}
\begin{document}

\begin{abstract}
A pebble tree is an ordered tree where each node receives some colored pebbles, in such a way that each unary node receives at least one pebble, and each subtree has either one more or as many leaves as pebbles of each color.
We show that the contraction poset on pebble trees is isomorphic to the face poset of a convex polytope called pebble tree polytope.
Beside providing intriguing generalizations of the classical permutahedra and associahedra, our motivation is that the faces of the pebble tree polytopes provide realizations as convex polytopes of all assocoipahedra constructed by K.~Poirier and T.~Tradler only as polytopal complexes.
\end{abstract}

\vspace*{-.5cm}
\maketitle

\vspace{-.5cm}
\tableofcontents
\vspace{-.7cm}

%%%%%%%%%%%%%%%%%%%%%%%%%%%%%%%%%%%%%%

\section{Introduction}

Permutahedra and associahedra are among the most classical polytopes in algebraic combinatorics.
The $(n-1)$-dimensional permutahedron has a vertex for each permutation of~$[n] \eqdef \{1, \dots, n\}$ and an edge for each pair of permutations related by the transposition of two adjacent entries.
The $(n-1)$-dimensional associahedron has a vertex for each binary tree on~$n$ nodes and an edge for each pair of binary trees related by a rotation.
These two families of polytopes admit common generalizations explaining their similar behavior, including the permutreehedra of~\cite{PilaudPons-permutrees}, the quotientopes of~\cite{PilaudSantos-quotientopes}, and the $(m,n)$-multiplihedra of~\cite{ChapotonPilaud}.
All these polytopes are actually deformed permutahedra (defined as generalized permutahedra in~\cite{Postnikov,PostnikovReinerWilliams}), meaning that their normal fans all coarsen the braid fan.
This paper is devoted to another common generalization to the permutahedra and associahedra, which are not deformed permutahedra in general.

The combinatorics of this generalization is based on pebble trees.
A \defn{pebble tree} is an ordered tree where each node receives some colored pebbles in such a way that each unary node receives at least one pebble, and each subtree has either one more or as many leaves as pebbles of each color (see \cref{fig:pebbleTrees}).
We consider the set of pebble trees with a fixed number of leaves and fixed sets of unbalanced and balanced colors (\ie for which colors the whole pebble tree as one more or as many leaves as pebbles).
It is immediate from the definition that these pebble trees are closed by arc contractions, and our main result is that the contraction poset is the face poset of a simple convex polytope, that we call the \defn{pebble tree polytope} (see \cref{fig:pebbleTreePolytopes}).
For this, we first construct the \defn{pebble tree fan}, associating to each pebble tree a polyhedral cone in a very natural way, and then prove that this fan is the normal fan of a polytope by checking that its wall-crossing inequalities are satisfiable.
\begin{figure}
	\centerline{
		\pebbleTree{[ [ ${\circ}{\circ}{\circ}{\bullet}$ [ $\bullet$ [ ${\circ}{\bullet}$ [ \leaf ] [ \leaf ] ] [ $\circ$ [ \leaf ] ] ] [ ${\circ}{\circ}{\bullet}{\bullet}$ [ \leaf ] [ \leaf ] [ \leaf ] ] [ ${\circ}{\bullet}$ [ ${\circ}{\bullet}$ [ \leaf ] [ $\bullet$ [ \leaf ] ] ] [ \leaf ] ] ] ]}
		\quad
		\pebbleTree{[ [ ${\circ}{\circ}{\circ}{\bullet}$ [ $\bullet$ [ ${\circ}{\bullet}$ [ \leaf ] [ \leaf ] ] [ $\circ$ [ \leaf ] ] ] [ ${\circ}{\circ}{\bullet}{\bullet}$ [ \leaf ] [ \leaf ] [ \leaf ] ] [ ${\circ}{\circ}{\bullet}{\bullet}$ [ \leaf ] [ $\bullet$ [ \leaf ] ] [ \leaf ] ] ] ]}
		\quad
		\pebbleTree{[ [ [ $\circ$ [ [ $\circ$ [ \leaf ] ] [ $\bullet$ [ \leaf ] ] ] ] [ $\circ$ [ $\bullet$ [ \leaf ] ] ] ] ]}
		\quad
		\pebbleTree{[ [ $\circ$ [ [ [ $\circ$ [ \leaf ] ] [ $\bullet$ [ \leaf ] ] ] [ $\circ$ [ $\bullet$ [ \leaf ] ] ] ] ] ]}
	}
	\caption{Some $\circ$-balanced and $\bullet$-unbalanced $\{\circ, \bullet\}$-pebble trees. The first two are related by a contraction (\cref{subsec:pebbleTreePoset}), while the last two are related by a flip (\cref{subsec:pebbleTreeFlipGraph}).}
	\label{fig:pebbleTrees}
\end{figure}

Our construction recovers the combinatorics and geometry of the permutahedra and associahedra in two degenerate situations.
On the one hand, pebble trees with a single leaf can be seen as ordered partitions of their balanced colors, and the pebble tree polytope is the permutahedron.
On the other hand, pebble trees with no pebbles are Schr\"oder trees, and the pebble tree polytope is the associahedron.
But the special situation which motivated this paper is when there is exactly one color of pebbles.
The pebble trees are then in bijection with some specific oriented planar trees considered by K.~Poirier and T.~Tradler in~\cite{PoirierTradler} for the combinatorics of algebraic structures endowed with a binary product and a co-inner product.
These structures are closely connected to the $V_\infty$-algebras of T.~Tradler and M.~Zeinalian~\cite{TradlerZeinalian} that arose in a tentative algebraic model for string topology operations defined by M.~Chas and D.~Sullivan \cite{ChasSullivan}.
It is proved in~\cite{PoirierTradler} that the contraction posets on these oriented planar trees are face lattices of the \defn{assocoipahedra}, which are polytopal complexes refining the boundary complex of the Cartesian product of an associahedron with a simplex.
We prove here that all assocoipahedra can actually be realized as convex polytopes using faces of pebble tree polytopes (see \cref{fig:assocoipahedra}).

The paper is organized as follows.
\cref{sec:pebbleTreeCombinatorics} is devoted to the combinatorics of pebble trees.
In \cref{subsec:pebbleTrees}, we provide more precise definitions and notations for pebble trees, we introduce some natural maps between families of pebble trees, and we give the precise bijection with the oriented planar trees of~\cite{PoirierTradler}.
We introduce in \cref{subsec:pebbleTreePoset} the pebble tree contraction poset, prove in \cref{subsec:pebbleTreeComplex} that it is the face poset of a pseudomanifold called the pebble tree complex, and discuss in \cref{subsec:pebbleTreeFlipGraph} the adjacency graph of this complex called the pebble tree flip graph.
\cref{sec:pebbleTreeGeometry} is devoted to the geometry of pebble trees.
After quickly reminding some geometric preliminaries in \cref{subsec:geometricPreliminaries}, we construct the pebble tree fan in \cref{subsec:pebbleTreeFan} and the pebble tree polytope in \cref{subsec:pebbleTreePolytope}.
Finally, \cref{sec:pebbleTreeNumerology} is devoted to the numerology of pebble trees.
We compute the generating functions of the maximal pebble trees in \cref{subsec:enumerationMPT} and of all the pebble trees in \cref{subsec:enumerationPT}, and gather explicit expansions of these generating functions in \cref{subsec:generatingFunctions}.
While the methods are standard computations based on generatingfunctionology~\cite{FlajoletSedgewick}, the results reveal a few surprises.

%%%%%%%%%%%%%%%%%%%%%%%%%%%%%%%%%%%%%%

\section{Pebble tree combinatorics}
\label{sec:pebbleTreeCombinatorics}

In this section, we define pebble trees (\cref{subsec:pebbleTrees}) and study the pebble tree contraction poset (\cref{subsec:pebbleTreePoset}), the pebble tree complex (\cref{subsec:pebbleTreeComplex}), and the pebble tree flip graph (\cref{subsec:pebbleTreeFlipGraph}).

%%%%%%%%%%

\subsection{Pebble trees}
\label{subsec:pebbleTrees}

Recall that an \defn{ordered tree} is either a \defn{leaf} or a \defn{node} with an ordered list of subtrees.
These subtrees are the \defn{children} of the node, and this node is the \defn{parent} of these subtrees.
As we only consider ordered trees, we omit the adjective ordered and only say tree.
For a node~$n$ in a tree~$T$, we denote by~$T_n$ the subtree of~$T$ rooted at~$n$.
For a subtree~$S$, we denote by~$\leaves$ the set of leaves of~$S$.

\pagebreak

\begin{definition}
\label{def:pebbleTree}
For a finite set of colors~$\pebbleColors$, a \defn{$\pebbleColors$-pebble tree} is a tree with pebbles colored by~$\pebbleColors$ placed on its nodes such that
\begin{enumerate}[(i)]
\item each leaf receives no pebble, each node with a single child receives at least one pebble, and each node with at least two children receives arbitrary many pebbles (possibly none), \label{cond:pebbleNodeResctrictions}
\item for each subtree~$S$ and each color~$\pebbleColor \in \pebbleColors$, the number of leaves minus the number of pebbles of color~$\pebbleColor$ in~$S$ is either~$0$ or~$1$. \label{cond:localBalanced}
\end{enumerate}
\end{definition}

\begin{example}
\label{exm:pebbleTrees}
Two classical combinatorial objects are extreme examples of pebble trees:
\begin{itemize}
\item pebble trees with a single leaf can be seen as ordered partitions of their pebble colors,
\item pebble trees with no pebbles are Schr\"oder trees (where each node has either none or at least two children).
\end{itemize}
Some more generic examples of pebble trees are illustrated in \cref{fig:pebbleTrees}.
\end{example}

\begin{notation}
We call \defn{$\pebbleColor$-pebbles} the pebbles of color~$\pebbleColor$.
We call \defn{$\pebbleColor$-pebble default} of a subtree~$S$ the difference~$\pebbleDefault$ between the number of leaves and the number of $\pebbleColor$-pebbles of~$S$.
We say that $S$ is \defn{$\pebbleColor$-balanced} (resp.~\defn{$\pebbleColor$-unbalanced}) if~$\pebbleDefault = 0$ (resp.~$\pebbleDefault = 1$).
We denote by~$\balanced \eqdef \set{\pebbleColor \in \pebbleColors}{\pebbleDefault = 0}$ (resp.~$\unbalanced \eqdef \set{\pebbleColor \in \pebbleColors}{\pebbleDefault = 1}$) the set of colors~$\pebbleColor \in \pebbleColors$ for which~$S$ is~$\pebbleColor$-balanced (resp.~$\pebbleColor$-unbalanced).
\end{notation}

\begin{notation}
We denote by~$\pebbleTrees[L][\pebbleColors]$ the set of all pebble trees with leaves~$L$ and pebble colors~$\pebbleColors$, and by~$\pebbleTrees[L][B,U]$ the subset of $B$-balanced and $U$-unbalanced pebble trees of~$\pebbleTrees[L][\pebbleColors]$ for any~${B \sqcup U = \pebbleColors}$.
For~$\ell, b, u \in \N$, we define~$\pebbleTrees[\ell][b,u]$ as~$\pebbleTrees[{[\ell]}][{[b],[b+1,b+u]}]$.
Note that~$\pebbleTrees[L][B,U]$ is isomorphic to~$\pebbleTrees[\ell][b,u]$ for arbitrary~$L, B, U$ with~$|L| = \ell$, $|B| = b$ and~$|U| = u$.
It is however convenient to keep the notation~$\pebbleTrees[L][B,U]$ to define certain operations on pebble trees (see \cref{def:mirroringMap,def:balancingMap,def:insertingMap,def:rerootingMap}) and for recursive decompositions of the pebble trees (see \cref{sec:pebbleTreeNumerology}).
\end{notation}

\begin{remark}
Some immediate consequences of~\cref{def:pebbleTree}:
\begin{itemize}
\item $\pebbleTrees[L][\pebbleColors]$ is finite for any~$L$ and~$\pebbleColors$, thus $\pebbleTrees$ is finite for any~${\ell, b, u \in \N}$.
\item The number of $\pebbleColor$-pebbles at a node~$p$ with children~$c_1, \dots, c_k$ is $\big( \sum_{i = 1}^k \pebbleDefault[T_{c_i}] \big) - \pebbleDefault[T_p]$.
Hence, the number of $\pebbleColor$-unbalanced children of~$p$ is the number of $\pebbleColor$-pebbles at~$p$, plus~$1$ if~$p$ is $\pebbleColor$-unbalanced.
\item Each unary node has at least one pebble, and at most one of each color.
\item There is no consecutive chain formed by $|\pebbleColors| + 1$ unary nodes
\end{itemize}
\end{remark}

\enlargethispage{.3cm}
We now define five natural maps between pebble trees (see \cref{fig:balancingInsertingMirroring,fig:rerooting}), that will induce isomorphisms in \cref{prop:pebbleTreePosetOperations,prop:pebbleTreeComplexOperations,prop:pebbleTreePolytopeOperations}.
In \cref{def:insertingMap,def:uprootingMap}, we call $\pebbleColor$-leaf the only pebble tree of~$\pebbleTrees[1][\{\pebbleColor\},\varnothing]$, \ie whose root has a single pebble of color~$\pebbleColor$ and a single child which is a leaf.

\begin{figure}
	\centerline{
		\begin{tabular}{cccc}
			\pebbleTree{[ [ ${\circ}{\circ}{\circ}{\bullet}$ [ $\bullet$ [ ${\circ}{\bullet}$ [ \leaf ] [ \leaf ] ] [ $\circ$ [ \leaf ] ] ] [ ${\circ}{\circ}{\bullet}{\bullet}$ [ \leaf ] [ \leaf ] [ \leaf ] ] [ ${\circ}{\circ}{\bullet}{\bullet}$ [ \leaf ] [ $\bullet$ [ \leaf ] ] [ \leaf ] ] ] ]} &
			\pebbleTree{[ [ ${\circ}{\circ}{\circ}{\bullet}$ [ ${\circ}{\circ}{\bullet}{\bullet}$ [ \leaf ] [ $\bullet$ [ \leaf ] ] [ \leaf ] ] [ ${\circ}{\circ}{\bullet}{\bullet}$ [ \leaf ] [ \leaf ] [ \leaf ] ] [ $\bullet$ [ $\circ$ [ \leaf ] ] [ ${\circ}{\bullet}$ [ \leaf ] [ \leaf ] ] ] ] ]} &
			\pebbleTree{[ [ $\bullet$ [ ${\circ}{\circ}{\circ}{\bullet}$ [ $\bullet$ [ ${\circ}{\bullet}$ [ \leaf ] [ \leaf ] ] [ $\circ$ [ \leaf ] ] ] [ ${\circ}{\circ}{\bullet}{\bullet}$ [ \leaf ] [ \leaf ] [ \leaf ] ] [ ${\circ}{\circ}{\bullet}{\bullet}$ [ \leaf ] [ $\bullet$ [ \leaf ] ] [ \leaf ] ] ] ] ]} &
			\pebbleTree{[ [ ${\circ}{\circ}{\circ}{\bullet}$ [ $\bullet$ [ ${\circ}{\bullet}$ [ $\star$ [ \leaf ] ] [ $\star$ [ \leaf ] ] ] [ $\circ$ [ $\star$ [ \leaf ] ] ] ] [ ${\circ}{\circ}{\bullet}{\bullet}$ [ $\star$ [ \leaf ] ] [ $\star$ [ \leaf ] ] [ $\star$ [ \leaf ] ] ] [ ${\circ}{\circ}{\bullet}{\bullet}$ [ $\star$ [ \leaf ] ] [ $\bullet$ [ $\star$ [ \leaf ] ] ] [ $\star$ [ \leaf ] ] ] ] ]}
			\\[-.2cm]
			$T$ &
			$\mu(T)$ &
			$\beta_\bullet(T)$ &
			$\iota_\star(T)$
		\end{tabular}
	}
	\caption{A $\circ$-balanced and $\bullet$-unbalanced $\{\circ, \bullet\}$-pebble tree (left), the $\circ$-balanced and \mbox{$\bullet$-unba}\-lanced $\{\circ, \bullet\}$-pebble tree obtained by mirroring it (middle left), the $\{\circ,\bullet\}$-balanced \mbox{$\{\circ, \bullet\}$-pebble} tree obtained by $\bullet$-balancing it (middle right), and the $\{\circ,\star\}$-balanced and $\bullet$-unbalanced \mbox{$\{\circ, \bullet, \star\}$-pebble} tree obtained by $\star$-inserting it (right).}
	\label{fig:balancingInsertingMirroring}
\end{figure}

\begin{figure}
	\centerline{
		\begin{tabular}{cccc@{\qquad\qquad}cc}
			\pebbleTree{[ $r$ [ $\circ$ [ [ $\bullet$ [ $\circ$ [ $x$ ] ] [ $\bullet$ [ $y$ ] ] ] [ ${\circ}{\bullet}$ [ $z$ ] ] ] ] ]} &
			\pebbleTree{[ $x$ [ $\circ$ [ $\bullet$ [ $\bullet$ [ $y$ ] ] [ [ ${\circ}{\bullet}$ [ $z$ ] ] [ $\circ$ [ $r$ ] ] ] ] ] ]} &
			\pebbleTree{[ $y$ [ $\bullet$ [ $\bullet$ [ [ ${\circ}{\bullet}$ [ $z$ ] ] [ $\circ$ [ $r$ ] ] ] [ $\circ$ [ $x$ ] ] ] ] ]} &
			\pebbleTree{[ $z$ [ ${\circ}{\bullet}$ [ [ $\circ$ [ $r$ ] ] [ $\bullet$ [ $\circ$ [ $x$ ] ] [ $\bullet$ [ $y$ ] ] ] ] ] ]} &
			\pebbleTree{[ $r$ [ $\bullet$ [ $\circ$ [ $x$ ] ] [ $y$ ] [ ${\circ}{\bullet}$ [ $z$ ] ] ] ]} &
			\pebbleTree{[ $z$ [ ${\circ}{\bullet}$ [ ${\star}{\bullet}$ [ $\circ$ [ $x$ ] ] [ $\star$ [ $y$ ] ] ] ] ]}
			\\[.2cm]
			$T$ &
			$\rho_x(T)$ &
			$\rho_y(T)$ &
			$\rho_z(T)$ &
			$T$ &
			$\theta_\star(T)$
		\end{tabular}
	}
	\caption{A fully balanced $\{\circ, \bullet\}$-pebble tree and the fully balanced $\{\circ, \bullet\}$-pebble trees obtained by rerooting it at leaves $x$, $y$ and~$z$ respectively (left). A fully unbalanced $\{\circ, \bullet\}$-pebble tree and the fully balanced $\{\circ,\bullet,\star\}$-pebble tree obtained by $\star$-uprooting it (right).}
	\label{fig:rerooting}
\end{figure}

\begin{definition}
\label{def:mirroringMap}
The \defn{mirroring map} sends a pebble tree~$T$ of~$\pebbleTrees[L][B,U]$ to the pebble tree~$\mu(T)$ of~$\pebbleTrees[L][B,U]$ obtained by a vertical symmetry of the tree, meaning that~$\mu(T)$ is defined inductively by
\begin{itemize}
\item if~$T$ is just a leaf, then~$\mu(T)$ is just a leaf,
\item if~$T$ is a node with pebbles~$P$ and children~$C_1, \dots, C_j$, then~$\mu(T)$ is a node with pebbles~$P$ and children~$\mu(C_j), \dots, \mu(C_1)$.
\end{itemize}
\end{definition}

\begin{definition}
\label{def:balancingMap}
If~$\pebbleColor \in U$, the \defn{$\pebbleColor$-balancing map} sends a pebble tree~$T$ of~$\pebbleTrees[L][B,U]$ to the pebble tree~$\beta_{\pebbleColor}(T)$ of~$\pebbleTrees[L][B \cup \{\pebbleColor\}, U \ssm \{\pebbleColor\}]$ whose root has a single pebble of color~$\pebbleColor$ and a single child~$T$.
\end{definition}

\begin{definition}
\label{def:insertingMap}
If~$\pebbleColor \notin B \cup U$, the \defn{$\pebbleColor$-inserting map} sends a pebble tree~$T$ of~$\pebbleTrees[L][B,U]$ to the pebble tree~$\iota_{\pebbleColor}(T)$ of~$\pebbleTrees[L][B \cup \{\pebbleColor\}, U]$ obtained from~$T$ by replacing each leaf by a $\pebbleColor$-leaf.
\end{definition}

\begin{definition}
\label{def:rerootingMap}
If~$x \in L$ and~$U = \varnothing$, the \defn{$x$-rerooting map} sends a pebble tree~$T$ of~$\pebbleTrees[L][B,\varnothing]$ to the pebble tree~$\rho_x(T) \in \pebbleTrees[L][B,\varnothing]$ obtained by hanging~$T$ from its leaf~$x$, \ie defined inductively~by
\begin{itemize}
\item if~$T$ is just the leaf~$x$, then~$\rho_x(T)$ is just a leaf denoted~$r$,
\item if~$T$ is a node with pebbles~$P$ and children~$C_1, \dots, C_j$ and~$i \in [j]$ is such that~$x \in \leaves[C_i]$, then~$\rho_x(T)$ is obtained by replacing the leaf~$r$ of~$\rho_x(C_i)$ by a node with pebbles~$P$ and children~$C_{i+1}, \dots, C_j, r, C_1, \dots, C_{i-1}$.
\end{itemize}
\end{definition}

\begin{definition}
\label{def:uprootingMap}
If~$\ell > 1$, $B = \varnothing$ and~$\pebbleColor \notin U$, the \defn{$\pebbleColor$-uprooting map} sends a pebble tree~$T$ of~$\pebbleTrees[{[\ell]}][\varnothing, U]$ to the pebble tree~$\theta_{\pebbleColor}(T)$ of~$\pebbleTrees[{[\ell-1]}][U \cup \{\pebbleColor\}, \varnothing]$ obtained from~$T$ by first hanging~$T$ from its rightmost leaf, then deleting the leftmost leaf and placing a $\pebbleColor$-pebble at its parent, and finally replacing all remaining leaves except the first by a $\pebbleColor$-leaf.\end{definition}

Finally, our next three remarks connect pebble trees with other relevant families of trees.

\begin{remark}
\label{rem:bijectionPebbleTreesOrientedTrees1}
Consider a word~$\alpha \in \{\textsc{i}, \textsc{o}\}^{\ell+1}$ starting with~$\textsc{o}$ (here, $\textsc{i}$ and $\textsc{o}$ stand for incoming and outgoing).
An \defn{$\alpha$-tree} is a rooted oriented planar tree such that
\begin{itemize}
\item labeling the external arrows counterclockwise starting from the root, the $i$th arrow is incoming if~$\alpha_i = \textsc{i}$ and outgoing if~$\alpha_i = \textsc{o}$,
\item each internal node has at least one outgoing arrow,
\item there is no node with precisely one incoming and one outgoing arrow.
\end{itemize}
These trees arise in the combinatorics of algebras endowed with a binary product and a co-inner product.
They are studied in details in~\cite{PoirierTradler}.
It turns out that they can be understood from pebble trees.

First, as illustrated in \cref{fig:orientation}, there are simple bijections between the pebble trees of~$\pebbleTrees[\ell][1,0]$ and the $\textsc{o}^{\ell+1}$-trees:
\begin{itemize}
\item Starting from a pebble tree~$T \in \pebbleTrees[\ell][1,0]$, orient each arc~$(p,c)$ of~$T$ from~$c$ to~$p$ if $c$ is balanced, and from $p$ to $c$ if $c$ is unbalanced, and forget all pebbles.
\item Starting from a $\textsc{o}^{\ell+1}$-tree, place at each node one less pebbles than its outdegree, and forget the orientations.
\end{itemize}
In the present paper, we prefer our interpretation as pebble trees as it enables us to consider several pebble colors simultaneously.

Consider now an arbitrary signature~$\alpha$.
Although not explicit in~\cite{PoirierTradler}, there is a clear map from $\alpha$-trees to~$\textsc{o}^{\ell+1}$-trees, which consists in replacing each incoming external arrow (like~$\uparrow$) by a node with a pair of outgoing arrows (like~$\updownarrow$).
This leads to a bijection between the $\alpha$-trees and the pebble trees of~$\pebbleTrees[\ell][1,0]$ where the parent of the $i$th leaf is a unary node marked with a pebble.
\begin{figure}
	\centerline{
		\begin{tabular}{c@{\,}c@{\,}c@{\,}c@{\,}c@{\,}c@{\,}c@{\,}c}
			\pebbleTree{[ [ [ $\bullet$ [ [ $\bullet$ [ \leaf ] ] [ \leaf ] ] ] [ $\bullet$ [ \leaf ] ] ] ]} &
			\pebbleTree{[ [ $\bullet$ [ [ $\bullet$ [ \leaf ] ] [ \leaf ] ] [ $\bullet$ [ \leaf ] ] ] ]} &
			\pebbleTree{[ [ $\bullet$ [ $\bullet$ [ [ $\bullet$ [ \leaf ] ] [ \leaf ] ] ] [ \leaf ] ] ]} &
			\pebbleTree{[ [ [ $\bullet$ [ $\bullet$ [ \leaf ] ] [ \leaf ] ] [ $\bullet$ [ \leaf ] ] ] ]} &
			\pebbleTree{[ [ [ $\bullet$ [ $\bullet$ [ \leaf ] [ \leaf ] ] ] [ $\bullet$ [ \leaf ] ] ] ]} &
			\pebbleTree{[ [ $\bullet$ [ $\bullet$ [ \leaf ] [ \leaf ] ] [ $\bullet$ [ \leaf ] ] ] ]} &
			\pebbleTree{[ [ ${\bullet}{\bullet}$ [ $\bullet$ [ \leaf ] [ \leaf ] ] [ \leaf ] ] ]} &
			\pebbleTree{[ [ ${\bullet}{\bullet}{\bullet}$ [ \leaf ] [ \leaf ] [ \leaf ] ] ]}
			\\[-.3cm]
			\pebbleTree{ [ [ {}, edge=arrowup [ {}, edge=arrowup [ {}, edge=arrowdown [ {}, edge=arrowup [ \leaf, edge=arrowdown ] ] [ \leaf, edge=arrowdown ] ] ] [ {}, edge=arrowup [ \leaf, edge=arrowdown ] ] ] ]}[9pt][4pt] &
			\pebbleTree{[ [ {}, edge=arrowup [ {}, edge=arrowdown [ {}, edge=arrowup [ \leaf, edge=arrowdown ] ] [ \leaf, edge=arrowdown ] ] [ {}, edge=arrowup [ \leaf, edge=arrowdown ] ] ] ]}[9pt][4pt] &
			\pebbleTree{[ [ {}, edge=arrowup [ {}, edge=arrowup [ {}, edge=arrowdown [ {}, edge=arrowup [ \leaf, edge=arrowdown ] ] [ \leaf, edge=arrowdown ] ] ] [ \leaf, edge=arrowdown ] ] ]}[9pt][4pt] &
			\pebbleTree{[ [ {}, edge=arrowup [ {}, edge=arrowup [ {}, edge=arrowup [ \leaf, edge=arrowdown ] ] [ \leaf, edge=arrowdown ] ] [ {}, edge=arrowup [ \leaf, edge=arrowdown ] ] ] ]}[9pt][4pt] &
			\pebbleTree{[ [ {}, edge=arrowup [ {}, edge=arrowup [ {}, edge=arrowdown [ \leaf, edge=arrowdown ] [ \leaf, edge=arrowdown ] ] ] [ {}, edge=arrowup [ \leaf, edge=arrowdown ] ] ] ]}[9pt][4pt] &
			\pebbleTree{[ [ {}, edge=arrowup [ {}, edge=arrowdown [ \leaf, edge=arrowdown ] [ \leaf, edge=arrowdown ] ] [ {}, edge=arrowup [ \leaf, edge=arrowdown ] ] ] ]}[9pt][4pt] &
			\pebbleTree{[ [ {}, edge=arrowup [ {}, edge=arrowdown [ \leaf, edge=arrowdown ] [ \leaf, edge=arrowdown ] ] [ \leaf, edge=arrowdown ] ] ]}[9pt][4pt] &
			\pebbleTree{[ [ {}, edge=arrowup [ \leaf, edge=arrowdown ] [ \leaf, edge=arrowdown ] [ \leaf, edge=arrowdown ] ] ]}[9pt][4pt]
		\end{tabular}
	}
	\caption{Examples of the bijection between the pebble trees of~$\pebbleTrees[3][1,0]$ and the $\textsc{o}^4$-trees.}
	\label{fig:orientation}
\end{figure}
\end{remark}

\begin{remark}
\label{rem:CoriJacquardSchaeffer}
The reader familiar with the bijective combinatorics of planar maps might also see some connections with the $\beta$-trees of~\cite{JacquardSchaeffer,CoriSchaeffer}.
Indeed, labeling each node~$p$ of a pebble tree by the pebble default~$\pebbleDefault[T_p]$, we obtain a $\beta(1,1)$-tree.
However, this map is clearly injective but not surjective, and the additional condition given by the pebble trees is unclear on $\beta(1,1)$-trees.
\end{remark}

\begin{remark}
\label{rem:multiplihedra}
There is also a natural map from the pebble trees of~$\pebbleTrees[\ell][1,0]$ to the painted trees corresponding to the faces of the multiplihedron~\cite{Stasheff-HSpaces, Forcey-multiplihedra, ChapotonPilaud}.
Namely, we can just forget all pebbles which have a pebble on the path to the root of the pebble tree to obtain a painted tree.
\end{remark}

%%%%%%%%%%

\subsection{Pebble tree contraction poset}
\label{subsec:pebbleTreePoset}

We now define the contraction poset on pebble trees, and will see that it is the face poset of a simplicial complex in \cref{subsec:pebbleTreeComplex} and of a polytope in \cref{subsec:pebbleTreePolytope}.

\begin{figure}[p]
	\centerline{
		\begin{tikzpicture}[yscale=2.7]
			\begin{scope}[xscale=.9]
      			\node (v1) at (-9,4.1) {\pebbleTree{[ [ [ \leaf ] [ $\bullet$ [ [ $\bullet$ [ \leaf ] ] [ \leaf ] ] ] ] ]}};
      			\node (v2) at (-7,4.1) {\pebbleTree{[ [ [ $\bullet$ [ \leaf ] ] [ [ $\bullet$ [ \leaf ] ] [ \leaf ] ] ] ]}};
      			\node (v3) at (-5,4.1) {\pebbleTree{[ [ [ $\bullet$ [ \leaf ] ] [ [ \leaf ] [ $\bullet$ [ \leaf ] ] ] ] ]}};
      			\node (v4) at (-3,4.1) {\pebbleTree{[ [ [ \leaf ] [ $\bullet$ [ [ \leaf ] [ $\bullet$ [ \leaf ] ] ] ] ] ]}};
      			\node (v5) at (-1,4.1) {\pebbleTree{[ [ [ \leaf ] [ [ $\bullet$ [ \leaf ] ] [ $\bullet$ [ \leaf ] ] ] ] ]}};
      			\node (v6) at (1,4.1) {\pebbleTree{[ [ [ [ $\bullet$ [ \leaf ] ] [ $\bullet$ [ \leaf ] ] ] [ \leaf ] ] ]}};
      			\node (v7) at (3,4.1) {\pebbleTree{[ [ [ $\bullet$ [ [ $\bullet$ [ \leaf ] ] [ \leaf ] ] ] [ \leaf ] ] ]}};
      			\node (v8) at (5,4.1) {\pebbleTree{[ [ [ [ $\bullet$ [ \leaf ] ] [ \leaf ] ] [ $\bullet$ [ \leaf ] ] ] ]}};
      			\node (v9) at (7,4.1) {\pebbleTree{[ [ [ [ \leaf ] [ $\bullet$ [ \leaf ] ] ] [ $\bullet$ [ \leaf ] ] ] ]}};
      			\node (v10) at (9,4.1) {\pebbleTree{[ [ [ $\bullet$ [ [ \leaf ] [ $\bullet$ [ \leaf ] ] ] ] [ \leaf ] ] ]}};
			\end{scope}
			\begin{scope}[xscale=1.2]
      			\node (e1) at (-7,3) {\pebbleTree{[ [ $\bullet$ [ \leaf ] [ [ $\bullet$ [ \leaf ] ] [ \leaf ] ] ] ]}};
      			\node (e2) at (-6,3) {\pebbleTree{[ [ [ $\bullet$ [ \leaf ] ] [ $\bullet$ [ \leaf ] [ \leaf ] ] ] ]}};
      			\node (e3) at (-5,3) {\pebbleTree{[ [ [ \leaf ] [ $\bullet$ [ $\bullet$ [ \leaf ] [ \leaf ] ] ] ] ]}};
      			\node (e4) at (-4,3) {\pebbleTree{[ [ $\bullet$ [ \leaf ] [ [ \leaf ] [ $\bullet$ [ \leaf ] ] ] ] ]}};
      			\node (e5) at (-3,3) {\pebbleTree{[ [ [ \leaf ] [ $\bullet$ [ $\bullet$ [ \leaf ] ] [ \leaf ] ] ] ]}};
      			\node (e6) at (-2,3) {\pebbleTree{[ [ [ \leaf ] [ $\bullet$ [ \leaf ] [ $\bullet$ [ \leaf ] ] ] ] ]}};
      			\node (e7) at (-1,3) {\pebbleTree{[ [ [ $\bullet$ [ \leaf ] ] [ $\bullet$ [ \leaf ] ] [ \leaf ] ] ]}};
      			\node (e8) at (0,3) {\pebbleTree{[ [ [ $\bullet$ [ \leaf ] ] [ \leaf ] [ $\bullet$ [ \leaf ] ] ] ]}};
      			\node (e9) at (1,3) {\pebbleTree{[ [ [ \leaf ] [ $\bullet$ [ \leaf ] ] [ $\bullet$ [ \leaf ] ] ] ]}};
      			\node (e10) at (2,3) {\pebbleTree{[ [ [ $\bullet$ [ $\bullet$ [ \leaf ] ] [ \leaf ] ] [ \leaf ] ] ]}};
      			\node (e11) at (3,3) {\pebbleTree{[ [ [ $\bullet$ [ \leaf ] [ $\bullet$ [ \leaf ] ] ] [ \leaf ] ] ]}};
      			\node (e12) at (4,3) {\pebbleTree{[ [ $\bullet$ [ [ $\bullet$ [ \leaf ] ] [ \leaf ] ] [ \leaf ] ] ]}};
      			\node (e13) at (5,3) {\pebbleTree{[ [ [ $\bullet$ [ $\bullet$ [ \leaf ] [ \leaf ] ] ] [ \leaf ] ] ]}};
      			\node (e14) at (6,3) {\pebbleTree{[ [ [ $\bullet$ [ \leaf ] [ \leaf ] ] [ $\bullet$ [ \leaf ] ] ] ] ]}};
      			\node (e15) at (7,3) {\pebbleTree{[ [ $\bullet$ [ [ \leaf ] [ $\bullet$ [ \leaf ] ] ] [ \leaf ] ] ]}};
			\end{scope}
			\begin{scope}[xscale=1.8]
      			\node (f1) at (-3,2) {\pebbleTree{[ [ $\bullet$ [ \leaf ] [ $\bullet$ [ \leaf ] [ \leaf ] ] ] ]}};
      			\node (f2) at (-2,2) {\pebbleTree{[ [ [ \leaf ] [ ${\bullet}{\bullet}$ [ \leaf ] [ \leaf ] ] ] ]}};
      			\node (f3) at (-1,2) {\pebbleTree{[ [ $\bullet$ [ $\bullet$ [ \leaf ] ] [ \leaf ] [ \leaf ] ] ]}};
      			\node (f4) at (0,2) {\pebbleTree{[ [ $\bullet$ [ \leaf ] [ $\bullet$ [ \leaf ] ] [ \leaf ] ] ]}};
      			\node (f5) at (1,2) {\pebbleTree{[ [ $\bullet$ [ \leaf ] [ \leaf ] [ $\bullet$ [ \leaf ] ] ] ]}};
      			\node (f6) at (2,2) {\pebbleTree{[ [ [ ${\bullet}{\bullet}$ [ \leaf ] [ \leaf ] ] [ \leaf ] ] ]}};
      			\node (f7) at (3,2) {\pebbleTree{[ [ $\bullet$ [ \leaf ] [ $\bullet$ [ \leaf ] [ \leaf ] ] ] ]}};
			\end{scope}
			\begin{scope}[xshift=-1]
      			\node (p) at (0,1.2) {\pebbleTree{[ [ ${\bullet}{\bullet}$ [ \leaf ] [ \leaf ] [ \leaf ] ] ]}};
			\end{scope}
			\draw[color=gray] (p.north) -- (f1.south);
			\draw[color=gray] (p.north) -- (f2.south);
			\draw[color=gray] (p.north) -- (f3.south);
			\draw[color=gray] (p.north) -- (f4.south);
			\draw[color=gray] (p.north) -- (f5.south);
			\draw[color=gray] (p.north) -- (f6.south);
			\draw[color=gray] (p.north) -- (f7.south);
			\draw[color=gray] (f1.north) -- (e1.south);
			\draw[color=gray] (f1.north) -- (e2.south);
			\draw[color=gray] (f1.north) -- (e3.south);
			\draw[color=gray] (f1.north) -- (e4.south);
			\draw[color=gray] (f2.north) -- (e3.south);
			\draw[color=gray] (f2.north) -- (e5.south);
			\draw[color=gray] (f2.north) -- (e6.south);
			\draw[color=gray] (f3.north) -- (e2.south);
			\draw[color=gray] (f3.north) -- (e7.south);
			\draw[color=gray] (f3.north) -- (e8.south);
			\draw[color=gray] (f3.north) -- (e10.south);
			\draw[color=gray] (f3.north) -- (e12.south);
			\draw[color=gray] (f4.north) -- (e1.south);
			\draw[color=gray] (f4.north) -- (e5.south);
			\draw[color=gray] (f4.north) -- (e7.south);
			\draw[color=gray] (f4.north) -- (e9.south);
			\draw[color=gray] (f4.north) -- (e11.south);
			\draw[color=gray] (f4.north) -- (e15.south);
			\draw[color=gray] (f5.north) -- (e4.south);
			\draw[color=gray] (f5.north) -- (e6.south);
			\draw[color=gray] (f5.north) -- (e8.south);
			\draw[color=gray] (f5.north) -- (e9.south);
			\draw[color=gray] (f5.north) -- (e14.south);
			\draw[color=gray] (f6.north) -- (e10.south);
			\draw[color=gray] (f6.north) -- (e11.south);
			\draw[color=gray] (f6.north) -- (e13.south);
			\draw[color=gray] (f7.north) -- (e12.south);
			\draw[color=gray] (f7.north) -- (e13.south);
			\draw[color=gray] (f7.north) -- (e14.south);
			\draw[color=gray] (f7.north) -- (e15.south);
			\draw[color=gray] (e1.north) -- (v2.south);
			\draw[color=gray] (e1.north) -- (v1.south);
			\draw[color=gray] (e2.north) -- (v2.south);
			\draw[color=gray] (e2.north) -- (v3.south);
			\draw[color=gray] (e3.north) -- (v1.south);
			\draw[color=gray] (e3.north) -- (v4.south);
			\draw[color=gray] (e4.north) -- (v3.south);
			\draw[color=gray] (e4.north) -- (v4.south);
			\draw[color=gray] (e5.north) -- (v1.south);
			\draw[color=gray] (e5.north) -- (v5.south);
			\draw[color=gray] (e6.north) -- (v4.south);
			\draw[color=gray] (e6.north) -- (v5.south);
			\draw[color=gray] (e7.north) -- (v2.south);
			\draw[color=gray] (e7.north) -- (v6.south);
			\draw[color=gray] (e8.north) -- (v3.south);
			\draw[color=gray] (e8.north) -- (v8.south);
			\draw[color=gray] (e9.north) -- (v5.south);
			\draw[color=gray] (e9.north) -- (v9.south);
			\draw[color=gray] (e10.north) -- (v6.south);
			\draw[color=gray] (e10.north) -- (v7.south);
			\draw[color=gray] (e11.north) -- (v6.south);
			\draw[color=gray] (e11.north) -- (v10.south);
			\draw[color=gray] (e12.north) -- (v7.south);
			\draw[color=gray] (e12.north) -- (v8.south);
			\draw[color=gray] (e13.north) -- (v7.south);
			\draw[color=gray] (e13.north) -- (v10.south);
			\draw[color=gray] (e14.north) -- (v8.south);
			\draw[color=gray] (e14.north) -- (v9.south);
			\draw[color=gray] (e15.north) -- (v10.south);
			\draw[color=gray] (e15.north) -- (v9.south);
		\end{tikzpicture}
	}
	\caption{The pebble tree contraction poset~$\pebbleTreePoset[3][0,1]$.}
	\label{fig:contractionPoset301}
\end{figure}

\begin{figure}[p]
	\centerline{
		\begin{tikzpicture}[yscale=2.7]
			\begin{scope}[xscale=.9]
      			\node (v1) at (-9,4.1) {\pebbleTree{[ [ $\circ$ [ [ \leaf ] [ $\circ$ [ $\bullet$ [ \leaf ] ] ] ] ] ]}};
      			\node (v2) at (-7,4.1) {\pebbleTree{[ [ $\circ$ [ [ \leaf ] [ $\bullet$ [ $\circ$ [ \leaf ] ] ] ] ] ]}};
      			\node (v3) at (-5,4.1) {\pebbleTree{[ [ [ $\circ$ [ \leaf ] ] [ $\bullet$ [ $\circ$ [ \leaf ] ] ] ] ]}};
      			\node (v4) at (-3,4.1) {\pebbleTree{[ [ [ $\circ$ [ \leaf ] ] [ $\circ$ [ $\bullet$ [ \leaf ] ] ] ] ]}};
      			\node (v5) at (-1,4.1) {\pebbleTree{[ [ $\circ$ [ [ $\circ$ [ \leaf ] ] [ $\bullet$ [ \leaf ] ] ] ] ]}};
      			\node (v6) at (1,4.1) {\pebbleTree{[ [ $\circ$ [ [ $\bullet$ [ \leaf ] ] [ $\circ$ [ \leaf ] ] ] ] ]}};
      			\node (v7) at (3,4.1) {\pebbleTree{[ [ [ $\circ$ [ $\bullet$ [ \leaf ] ] ] [ $\circ$ [ \leaf ] ] ] ]}};
      			\node (v8) at (5,4.1) {\pebbleTree{[ [ [ $\bullet$ [ $\circ$ [ \leaf ] ] ] [ $\circ$ [ \leaf ] ] ] ]}};
      			\node (v9) at (7,4.1) {\pebbleTree{[ [ $\circ$ [ [ $\bullet$ [ $\circ$ [ \leaf ] ] ] [ \leaf ] ] ] ]}};
      			\node (v10) at (9,4.1) {\pebbleTree{[ [ $\circ$ [ [ $\circ$ [ $\bullet$ [ \leaf ] ] ] [ \leaf ] ] ] ]}};
			\end{scope}
			\begin{scope}[xscale=1.2]
      			\node (e1) at (-7,3) {\pebbleTree{[ [ $\circ$ [ [ \leaf ] [ ${\circ}{\bullet}$ [ \leaf ] ] ] ] ]}};
      			\node (e2) at (-6,3) {\pebbleTree{[ [ $\circ$ [ \leaf ] [ $\bullet$ [ $\circ$ [ \leaf ] ] ] ] ]}};
      			\node (e3) at (-5,3) {\pebbleTree{[ [ $\circ$ [ \leaf ] [ $\circ$ [ $\bullet$ [ \leaf ] ] ] ] ]}};
      			\node (e4) at (-4,3) {\pebbleTree{[ [ [ $\circ$ [ \leaf ] ] [ ${\circ}{\bullet}$ [ \leaf ] ] ] ]}};
      			\node (e5) at (-3,3) {\pebbleTree{[ [ $\circ$ [ $\circ$ [ \leaf ] [ $\bullet$ [ \leaf ] ] ] ] ]}};
      			\node (e6) at (-2,3) {\pebbleTree{[ [ $\circ$ [ $\circ$ [ \leaf ] ] [ $\bullet$ [ \leaf ] ] ] ]}};
      			\node (e7) at (-1,3) {\pebbleTree{[ [ $\circ$ [ $\bullet$ [ \leaf ] [ $\circ$ [ \leaf ] ] ] ] ]}};
      			\node (e8) at (0,3) {\pebbleTree{[ [ $\bullet$ [ $\circ$ [ \leaf ] ] [ $\circ$ [ \leaf ] ] ] ]}};
      			\node (e9) at (1,3) {\pebbleTree{[ [ $\circ$ [ $\bullet$ [ $\circ$ [ \leaf ] ] [ \leaf ] ] ] ]}};
      			\node (e10) at (2,3) {\pebbleTree{[ [ $\circ$ [ $\bullet$ [ \leaf ] ] [ $\circ$ [ \leaf ] ] ] ]}};
      			\node (e11) at (3,3) {\pebbleTree{[ [ $\circ$ [ $\circ$ [ $\bullet$ [ \leaf ] ] [ \leaf ] ] ] ]}};
      			\node (e12) at (4,3) {\pebbleTree{[ [ [ ${\circ}{\bullet}$ [ \leaf ] ] [ $\circ$ [ \leaf ] ] ] ]}};
      			\node (e13) at (5,3) {\pebbleTree{[ [ $\circ$ [ $\circ$ [ $\bullet$ [ \leaf ] ] ] [ \leaf ] ] ]}};
      			\node (e14) at (6,3) {\pebbleTree{[ [ $\circ$ [ $\bullet$ [ $\circ$ [ \leaf ] ] ] [ \leaf ] ] ]}};
      			\node (e15) at (7,3) {\pebbleTree{[ [ $\circ$ [ [ ${\circ}{\bullet}$ [ \leaf ] ] [ \leaf ] ] ] ]}};
			\end{scope}
			\begin{scope}[xscale=1.8]
      			\node (f1) at (-3,2) {\pebbleTree{[ [ $\circ$ [ \leaf ] [ ${\circ}{\bullet}$ [ \leaf ] ] ] ]}};
      			\node (f2) at (-2,2) {\pebbleTree{[ [ ${\circ}{\circ}$ [ \leaf ] [ $\bullet$ [ \leaf ] ] ] ]}};
				\node (f3) at (-1,2) {\pebbleTree{[ [ ${\circ}{\bullet}$ [ \leaf ] [ $\circ$ [ \leaf ] ] ] ]}};
      			\node (f4) at (0,2) {\pebbleTree{[ [ $\circ$ [ ${\circ}{\bullet}$ [ \leaf ] [ \leaf ] ] ] ]}};
      			\node (f5) at (1,2) {\pebbleTree{[ [ ${\circ}{\bullet}$ [ $\circ$ [ \leaf ] ] [ \leaf ] ] ]}};
      			\node (f6) at (2,2) {\pebbleTree{[ [ ${\circ}{\circ}$ [ $\bullet$ [ \leaf ] ] [ \leaf ] ] ]}};
      			\node (f7) at (3,2) {\pebbleTree{[ [ $\circ$ [ ${\circ}{\bullet}$ [ \leaf ] ] [ \leaf ] ] ]}};
			\end{scope}
			\begin{scope}[xshift=-1]
      			\node (p) at (0,1.2) {\pebbleTree{[ [ ${\circ}{\circ}{\bullet}$ [ \leaf ] [ \leaf ] ] ]}};
			\end{scope}
			\draw[color=gray] (p.north) -- (f1.south);
			\draw[color=gray] (p.north) -- (f2.south);
			\draw[color=gray] (p.north) -- (f3.south);
			\draw[color=gray] (p.north) -- (f4.south);
			\draw[color=gray] (p.north) -- (f5.south);
			\draw[color=gray] (p.north) -- (f6.south);
			\draw[color=gray] (p.north) -- (f7.south);
			\draw[color=gray] (f1.north) -- (e1.south);
			\draw[color=gray] (f1.north) -- (e2.south);
			\draw[color=gray] (f1.north) -- (e3.south);
			\draw[color=gray] (f1.north) -- (e4.south);
			\draw[color=gray] (f2.north) -- (e3.south);
			\draw[color=gray] (f2.north) -- (e5.south);
			\draw[color=gray] (f2.north) -- (e6.south);
			\draw[color=gray] (f3.north) -- (e2.south);
			\draw[color=gray] (f3.north) -- (e7.south);
			\draw[color=gray] (f3.north) -- (e8.south);
			\draw[color=gray] (f3.north) -- (e10.south);
			\draw[color=gray] (f3.north) -- (e12.south);
			\draw[color=gray] (f4.north) -- (e1.south);
			\draw[color=gray] (f4.north) -- (e5.south);
			\draw[color=gray] (f4.north) -- (e7.south);
			\draw[color=gray] (f4.north) -- (e9.south);
			\draw[color=gray] (f4.north) -- (e11.south);
			\draw[color=gray] (f4.north) -- (e15.south);
			\draw[color=gray] (f5.north) -- (e4.south);
			\draw[color=gray] (f5.north) -- (e6.south);
			\draw[color=gray] (f5.north) -- (e8.south);
			\draw[color=gray] (f5.north) -- (e9.south);
			\draw[color=gray] (f5.north) -- (e14.south);
			\draw[color=gray] (f6.north) -- (e10.south);
			\draw[color=gray] (f6.north) -- (e11.south);
			\draw[color=gray] (f6.north) -- (e13.south);
			\draw[color=gray] (f7.north) -- (e12.south);
			\draw[color=gray] (f7.north) -- (e13.south);
			\draw[color=gray] (f7.north) -- (e14.south);
			\draw[color=gray] (f7.north) -- (e15.south);
			\draw[color=gray] (e1.north) -- (v2.south);
			\draw[color=gray] (e1.north) -- (v1.south);
			\draw[color=gray] (e2.north) -- (v2.south);
			\draw[color=gray] (e2.north) -- (v3.south);
			\draw[color=gray] (e3.north) -- (v1.south);
			\draw[color=gray] (e3.north) -- (v4.south);
			\draw[color=gray] (e4.north) -- (v3.south);
			\draw[color=gray] (e4.north) -- (v4.south);
			\draw[color=gray] (e5.north) -- (v1.south);
			\draw[color=gray] (e5.north) -- (v5.south);
			\draw[color=gray] (e6.north) -- (v4.south);
			\draw[color=gray] (e6.north) -- (v5.south);
			\draw[color=gray] (e7.north) -- (v2.south);
			\draw[color=gray] (e7.north) -- (v6.south);
			\draw[color=gray] (e8.north) -- (v3.south);
			\draw[color=gray] (e8.north) -- (v8.south);
			\draw[color=gray] (e9.north) -- (v5.south);
			\draw[color=gray] (e9.north) -- (v9.south);
			\draw[color=gray] (e10.north) -- (v6.south);
			\draw[color=gray] (e10.north) -- (v7.south);
			\draw[color=gray] (e11.north) -- (v6.south);
			\draw[color=gray] (e11.north) -- (v10.south);
			\draw[color=gray] (e12.north) -- (v7.south);
			\draw[color=gray] (e12.north) -- (v8.south);
			\draw[color=gray] (e13.north) -- (v7.south);
			\draw[color=gray] (e13.north) -- (v10.south);
			\draw[color=gray] (e14.north) -- (v8.south);
			\draw[color=gray] (e14.north) -- (v9.south);
			\draw[color=gray] (e15.north) -- (v10.south);
			\draw[color=gray] (e15.north) -- (v9.south);
		\end{tikzpicture}
	}
	\caption{The pebble tree contraction poset~$\pebbleTreePoset[2][1,1]$.}
	\label{fig:contractionPoset211}
\end{figure}

\begin{definition}
For any node~$c$ (not a leaf, nor the root) with parent~$p$ in a pebble tree~$T$, the \defn{contraction} of~$c$ in~$T$ is the pebble tree~$T/c$ obtained by replacing~$c$ by its children in the list of children of~$p$ and adding to~$p$ the pebbles of~$c$.
The \defn{pebble tree contraction poset}~$\pebbleTreePoset$ is the poset of contractions on pebble trees of~$\pebbleTrees$. 
\end{definition}

\begin{example}
In the extreme situations of \cref{exm:pebbleTrees}:
\begin{itemize}
\item the pebble tree contraction poset~$\pebbleTreePoset[1][b,u]$ is the refinement poset on ordered partitions of~$[b]$,
\item the pebble tree contraction poset~$\pebbleTreePoset[\ell][0,0]$ is the contraction poset on Schr\"oder trees with~$\ell$ leaves.
\end{itemize}
The pebble tree contraction posets~$\pebbleTreePoset[3][0,1]$ and~$\pebbleTreePoset[2][1,1]$ are illustrated in \cref{fig:contractionPoset301,fig:contractionPoset211}.
The fact that~$\pebbleTreePoset[3][0,1]$ and~$\pebbleTreePoset[2][1,1]$ are isomorphic can be seen applying successively Points~\eqref{prop:pebbleTreePosetUprootingMap}, \eqref{prop:pebbleTreePosetRerootingMap}, and~\eqref{prop:pebbleTreePosetBalancingMap} of \cref{prop:pebbleTreePosetOperations} below.
\end{example}

\begin{remark}
\label{rem:rankedContractionPoset}
Observe that:
\begin{itemize}
\item The set of pebble trees~$\pebbleTrees$ is clearly closed under contraction. Hence, the pebble tree contraction poset is a simplicial poset (a poset where each interval is a boolean algebra).
\item The pebble tree contraction poset is ranked: the rank of a pebble tree is \mbox{its number of nodes}.
\item The \defn{maximal} pebble trees of~$\pebbleTreePoset$ are the pebble trees with only unary nodes containing a single pebble and binary nodes containing no pebble (hence, they have $\ell(b+u)-u$ unary nodes and $\ell-1$ binary nodes, thus rank~$\ell (1 + b + u) - u - 1$).
\item The \defn{minimal} pebble tree of~$\pebbleTreePoset$ is the corolla with $\ell$ leaves and $\ell(b+u)-u$ pebbles at the root (hence it has rank~$1$).
\end{itemize}
\end{remark}

Observe now that the mirroring, balancing, inserting, rerooting and uprooting maps of \cref{def:mirroringMap,def:balancingMap,def:insertingMap,def:rerootingMap,def:uprootingMap} obviously commute with contractions.
This implies the following statement.

\begin{proposition}
\label{prop:pebbleTreePosetOperations}
Consider the operations of \cref{def:mirroringMap,def:balancingMap,def:insertingMap,def:rerootingMap,def:uprootingMap}.
\begin{enumerate}
\item \label{prop:pebbleTreePosetMirroringMap} The mirroring map of \cref{def:mirroringMap} defines a poset automorphism of~$\pebbleTreePoset[\ell][b,u]$.
\item \label{prop:pebbleTreePosetBalancingMap} If~$u > 1$, the balancing map of \cref{def:balancingMap} defines a poset isomorphism from the pebble tree contraction poset~$\pebbleTreePoset$ to a principal upper set of the pebble tree contraction poset~$\pebbleTreePoset[\ell][b+1,u-1]$. Hence, $\pebbleTreePoset$ is isomorphic to a principal upper set of~$\pebbleTreePoset[\ell][b+u,0]$.
\item \label{prop:pebbleTreePosetInsertingMap} The inserting map of \cref{def:insertingMap} defines a poset isomorphism from the pebble tree contraction poset~$\pebbleTreePoset$ to a principal upper set of the pebble tree contraction poset~$\pebbleTreePoset[\ell][b+1,u]$.
\item \label{prop:pebbleTreePosetRerootingMap} The rerooting maps of \cref{def:rerootingMap} define poset automorphisms of~$\pebbleTreePoset[\ell][b,0]$.
\item \label{prop:pebbleTreePosetUprootingMap} If~$\ell > 1$, the uprooting map of \cref{def:uprootingMap} defines a poset isomorphism from the pebble tree contraction poset~$\pebbleTreePoset[\ell][0,u]$ to a principal upper set of the pebble tree contraction~poset~$\pebbleTreePoset[\ell-1][u+1,0]$.
\end{enumerate}
\end{proposition}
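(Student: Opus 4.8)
The plan is to lean on the observation, just recorded, that all five maps commute with contractions: this makes each of them order-preserving, and the same then holds for its inverse wherever that inverse is defined, so once I know a given map $f$ is a bijection onto an explicitly described image, it is automatically a poset isomorphism onto that image. For each of the five maps I therefore reduce to three routine checks: that $f$ is well defined (its output is a pebble tree of the announced type), that $f$ is a bijection onto an explicit image with an explicit inverse, and that this image is the announced subposet. For $\mu$ and $\rho_x$ the image will be the whole target poset; for $\beta_\gamma$, $\iota_\gamma$ and $\theta_\gamma$ it will be a principal upper set, and there one inclusion is free — writing $M$ for the minimal corolla, the image of $f$ is contained in the principal upper set generated by $f(M)$ since $T \geq M$ and $f$ is order-preserving — so the substance lies in the reverse inclusion.

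For well-definedness only the balance condition~\eqref{cond:localBalanced} of \cref{def:pebbleTree} is at stake, condition~\eqref{cond:pebbleNodeResctrictions} being built into the constructions. I would dispatch $\mu$ (nothing to check); then $\beta_\gamma$ and $\iota_\gamma$ by noting that putting a unary node with a single $\gamma$-pebble above the whole tree, respectively above each leaf, lowers the $\gamma$-default of the affected subtrees by exactly $1$ and leaves all other defaults unchanged, turning $\gamma$ from unbalanced to balanced; then $\rho_x$ by observing that every subtree of $\rho_x(T)$ is, up to the relabeling exchanging $x$ with the new root leaf, either a subtree $S$ of $T$ or the complement of such an $S$ in $T$, and that when $U = \varnothing$ every such $S$ is balanced, hence so is its complement, whose number of leaves, respectively of pebbles of any given color, is $\ell$ minus the corresponding count for $S$ (using that $T$ then has exactly $\ell$ pebbles of each color); and finally $\theta_\gamma$ by writing it as the composite of hanging $T$ from its rightmost leaf, deleting the leftmost leaf while capping its parent with a $\gamma$-pebble, and $\iota_\gamma$-inserting the remaining leaves but the first, checking these steps in turn. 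At this stage $\mu$ is visibly an involution, hence a poset automorphism of~$\pebbleTreePoset[\ell][b,u]$, and $\rho_x$ is a bijection of~$\pebbleTrees[L][B,\varnothing]$ with inverse the rerooting at the leaf that replaced the old root, hence a poset automorphism of~$\pebbleTreePoset[\ell][b,0]$.

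For the three remaining maps I would cut out the image by a normal form: a tree of~$\pebbleTrees[\ell][b+1,u-1]$ lies in the image of $\beta_\gamma$ exactly when its root is unary with a single $\gamma$-pebble (inverse: delete the root); a tree of~$\pebbleTrees[\ell][b+1,u]$ lies in the image of $\iota_\gamma$ exactly when its $\gamma$-pebbles are precisely one at the necessarily unary parent of each leaf and none elsewhere (inverse: delete these unary leaf-parents); and the image of $\theta_\gamma$ is the composite of the three images coming from its factorization. I would also note that these inverses commute with contractions — contracting one of the distinguished $\gamma$-pebble-bearing nodes of a normal form would leave the image, so such nodes are never contracted along a chain inside the image — which upgrades $f$ to a poset isomorphism onto its image. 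The one substantial point, which I expect to be the main obstacle, is the reverse inclusion: that every pebble tree above $f(M)$ is already in normal form. I would derive it from two facts about contractions — the set of children and the set of pebbles of any surviving node can only grow under contraction (so the root, never being contracted, keeps its children and pebbles), and a unary node must carry at least one pebble by~\eqref{cond:pebbleNodeResctrictions}. For $T' \geq \beta_\gamma(M)$ these force the root of $T'$ to have at most one child, hence to be unary ($T'$ not being a single leaf), hence to carry exactly the $\gamma$-pebble; for $T' \geq \iota_\gamma(M)$ they force, leaf by leaf, the parent of each leaf to be a unary node carrying exactly the $\gamma$-pebble, and a $\gamma$-pebble to occur nowhere else; and chaining the corresponding statements handles $\theta_\gamma$. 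Identifying the resulting principal upper sets — and iterating the balancing isomorphism for the last assertion of~\eqref{prop:pebbleTreePosetBalancingMap} — then gives all five claims.
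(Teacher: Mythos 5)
The paper's entire ``proof'' of this proposition is the one-sentence observation preceding it, namely that the five maps obviously commute with contractions; so your route is the same as the paper's, only fleshed out, and the extra structure you add is sound where it is spelled out: commuting with contractions gives order-preservation both ways, the image of each non-surjective map is contained in the principal upper set generated by the image of the minimal corolla, and your surviving-node argument (the pebbles and children of a node that is never contracted only accumulate along contractions, a unary node must carry a pebble, and the total number of pebbles of each color is preserved) correctly pins down the images of $\beta_\gamma$ and $\iota_\gamma$ as the stated upper sets.

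There are, however, two concrete defects. First, your well-definedness argument for the rerooting map rests on a false claim: it is not true that every subtree $S$ of a fully balanced pebble tree is balanced (a leaf has $\gamma$-default $1$ for every color $\gamma$, and the trees of \cref{fig:rerooting} have unbalanced proper subtrees), and your count of the ``complement'' forgets the new leaf $r$ created at the old root; with your numbers the complement's $\gamma$-default would be $-\Delta_\gamma(S)$, which is negative whenever $S$ is $\gamma$-unbalanced, so the step fails as written. The correct bookkeeping is that the complementary subtree has $\ell - |\leaves[S]| + 1$ leaves and $\ell - \pi_\gamma(S)$ pebbles of color $\gamma$ (here $U = \varnothing$ is used to say $T$ has exactly $\ell$ pebbles of each color), hence $\gamma$-default $1 - \Delta_\gamma(S) \in \{0,1\}$ with no balancedness assumption on $S$. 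Second, the uprooting case is treated too loosely: the intermediate stages of the three-step factorization are \emph{not} pebble trees (after merely hanging a fully unbalanced tree from its rightmost leaf, the subtree rooted at the old root has default $2 - \Delta_\gamma \in \{1,2\}$ in every old color), so ``checking these steps in turn'' and ``the image of $\theta_\gamma$ is the composite of the three images'' do not parse as statements about maps between pebble-tree posets; you need to verify conditions (i) and (ii) of \cref{def:pebbleTree} on the final tree $\theta_\gamma(T)$ directly, and to spell out the normal form of the image (the parent of every leaf except the first is unary with a single $\gamma$-pebble, and there is exactly one further $\gamma$-pebble, for a total of $\ell - 1$) before running the same surviving-node argument. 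A minor further slip: $\iota_\gamma$ lowers the $\gamma$-default of a subtree with $k$ leaves by $k$, not by $1$; what you need is that every subtree inherited from $T$ becomes $\gamma$-balanced while each new $\gamma$-leaf has default $0$ for $\gamma$ and $1$ for the other colors.
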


Finally, we connect the $\alpha$-trees of~\cite{PoirierTradler} to an upper set of the pebble tree contraction poset.

\begin{remark}
\label{rem:bijectionPebbleTreesOrientedTrees2}
Following \cref{rem:bijectionPebbleTreesOrientedTrees1}, observe that for any signature~$\alpha \in \textsc{o} \cdot \{\textsc{i}, \textsc{o}\}^\ell$, the $\alpha$-tree contraction poset is isomorphic to the principal upper set of the pebble tree contraction poset~$\pebbleTreePoset[\ell][1,0]$ generated by the pebble tree whose root has~$\ell - |\alpha|_{\textsc{i}}$ pebbles and $\ell$ children, and whose $i$th children is a leaf if~$\alpha_i = \textsc{o}$ and a unary node with one pebble and a leaf if~$\alpha_i = \textsc{i}$.
\end{remark}

\begin{remark}
The following observations are consequences of \cref{rem:bijectionPebbleTreesOrientedTrees1,rem:bijectionPebbleTreesOrientedTrees2,prop:pebbleTreePosetOperations}:
\begin{itemize}
\item the $\textsc{o}^{\ell+1}$-tree contraction poset is isomorphic to the pebble tree contraction poset~$\pebbleTreePoset[\ell][1,0]$,
\item for any~$\alpha \in \textsc{o} \cdot \{\textsc{i}, \textsc{o}\}^\ell$ with a single occurrence of~$\textsc{i}$, the $\alpha$-tree contraction poset is isomorphic to the pebble tree contraction poset~$\pebbleTreePoset[\ell][0,1]$,
\item for~$\alpha = \textsc{oo} \textsc{i}^{\ell-1}$ or~$\alpha = \textsc{o} \textsc{i}^{\ell-1} \textsc{o}$, the $\alpha$-tree contraction poset is isomorphic to the pebble tree contraction poset~$\pebbleTreePoset[\ell+1][0,0]$ (\ie the contraction poset on Schr\"oder trees).
\end{itemize}
\end{remark}

%%%%%%%%%%

\subsection{Pebble tree complex}
\label{subsec:pebbleTreeComplex}

As mentioned in \cref{rem:rankedContractionPoset}, the pebble tree contraction poset~$\pebbleTreePoset$ is a simplicial poset.
We now construct the corresponding simplicial complex.
Recall that we denote by~$\leaves$ and~$\balanced$ the sets of leaves and of balanced colors in a subtree~$S$.
We will moreover need the following notations.

\begin{notation}
\label{def:encoding}
For an interval~$L \eqdef [s,t] \subseteq [\ell]$ and a subset~$B \subseteq [b+u]$, we define the sets
\[
L \otimes B \eqdef \bigcup_{p \in B} [\ell p + s - 1, \ell p + t - 1]
\qquad\text{and}\qquad
L \boxtimes B \eqdef [s,t-1] \cup ( L \otimes B ).
\]
\end{notation}

\begin{definition}
\label{def:pebbleTreeComplex}
The \defn{pebble tree complex}~$\pebbleTreeComplex$ is the simplicial complex whose simplices are the sets~$\Lambda(T) \eqdef \set{\lambda(S)}{S \text{ subtree of } T}$ for all pebble trees~$T \in \pebbleTrees$, where~$\lambda(S) \eqdef \leaves \boxtimes \balanced$.
\end{definition}

\begin{example}
In the extreme situations of \cref{exm:pebbleTrees}:
\begin{itemize}
\item the simplices of the pebble tree complex~$\pebbleTreeComplex[1][b,u]$ are the flags~$B_1 \subsetneq B_2 \subsetneq \dots \subsetneq B_k \subseteq [b]$,
\item the simplices of the pebble tree complex~$\pebbleTreePoset[\ell][0,0]$ are the collections of pairwise nested or non-adjacents intervals of~$[\ell-1]$.
\end{itemize}
\cref{fig:pebbleTreeSimplices} illustrates some more generic examples of simplices~$\Lambda(T)$.

\begin{figure}
	\centerline{
		\pebbleTree{[ [ {}, label={[label distance=0pt]180:{$12345$}} [ {$\circ$}, label={[label distance=-4pt]180:{$134$}} [ {}, label={[label distance=0pt]180:{$1$}} [ {$\circ$}, label={[label distance=-4pt]180:{$3$}} [ \leaf ] ] [ {$\bullet$}, label={[label distance=-4pt]0:{$\mathrlap{7}$}} [ \leaf ] ] ] ] [ {$\circ$}, label={[label distance=-4pt]0:{$58$}} [ {$\bullet$}, label={[label distance=-4pt]0:{$8$}} [ \leaf ] ] ] ] ]}[7pt][6pt]
		\quad
		\pebbleTree{[ [{$\circ$}, label={[label distance=-4pt]180:{$12345$}} [{$\circ$}, label={[label distance=-4pt]180:{$12$}} [{}, label={[label distance=0pt]180:{$1$}},scale=.01 [{$\circ$}, label={[label distance=-4pt]180:{$3$}} [ \leaf ] ] [{$\bullet$}, label={[label distance=-4pt]0:{$7$}} [ \leaf ] ] ] [{$\bullet$}, label={[label distance=-4pt]0:{$8$}} [ \leaf ] ] ] ] ]}
		\quad
		\pebbleTree{[ [{${\circ}{\circ}$}, label={[label distance=-4pt]180:{$12345$}} [{}, label={[label distance=0pt]180:{$1$}} [{$\circ$}, label={[label distance=-4pt]180:{$3$}} [ \leaf ] ] [{$\bullet$}, label={[label distance=-4pt]0:{$\mathrlap{7}$}} [ \leaf ] ] ] [{$\bullet$}, label={[label distance=-4pt]0:{$8$}} [ \leaf ] ] ] ]}
		\quad
		\pebbleTree{[ [{}, label={[label distance=1pt]180:{$12345$}} [{$\bullet$}, label={[label distance=0pt]180:{$13467$}} [{$\circ$}, label={[label distance=-4pt]180:{$3$}} [ \leaf ] ] [{$\bullet$}, label={[label distance=-4pt]0:{$\mathrlap{7}$}} [{$\circ$}, label={[label distance=-4pt]0:{$4$}} [ \leaf ] ] ] ] [{$\circ$}, label={[label distance=-4pt]0:{$5$}} [ \leaf ] ] ] ]}
		\quad
		\pebbleTree{[ [{${\circ}{\circ}{\circ}{\bullet}{\bullet}$}, label={[label distance=1pt]180:{$12345$}} [ \leaf ] [ \leaf ] [ \leaf ] ] ]}
	}
	\caption{Some $\circ$-balanced and $\bullet$-unbalanced $\{\circ, \bullet\}$-pebble trees~$T$ and the associated simplices~$\Lambda(T)$. Each node~$n$ of~$T$ is labeled by the concatenation of the elements of the set~$\lambda(T_n)$.}
	\label{fig:pebbleTreeSimplices}
\end{figure}
\end{example}

\begin{proposition}
\label{prop:pebbleTreeComplex}
The pebble tree complex~$\pebbleTreeComplex$ is a pseudomanifold, whose face poset is isomorphic to the pebble tree poset~$\pebbleTreePoset$.
\end{proposition}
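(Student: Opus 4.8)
The whole argument hinges on the faithfulness of the encoding~$\lambda$. The first step is the elementary but essential observation that $(L,B) \mapsto L \boxtimes B$ is injective on the relevant pairs --- a subinterval $L = [s,t] \subseteq [\ell]$ and a subset $B \subseteq [b+u]$ with $t>s$ or $B \neq \varnothing$, which is exactly the situation for the pair $(\leaves,\balanced)$ of a non-leaf subtree (a one-leaf non-leaf subtree is a chain of unary nodes, hence carries a pebble, hence has a balanced colour). Indeed, $(L \boxtimes B) \cap [1,\ell-1] = [s,t-1]$ recovers~$L$ when $t>s$, while the rest $L \otimes B$ is a disjoint union of one block $[\ell p+s-1,\ell p+t-1]$ of length~$|L|$ per $p \in B$, all lying in the residue class $s-1$ modulo~$\ell$ and all at least~$\ell$, which recovers~$B$ (and also~$s$ in the residual case $t=s$, $B\neq\varnothing$). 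Building on this, I would show that within one pebble tree~$T$ the map $S \mapsto \lambda(S)$ is injective on non-leaf subtrees and sends every leaf to~$\varnothing$: the leaf set of a subtree is an interval, so subtrees at incomparable nodes have disjoint leaf intervals, and subtrees at comparable nodes have strictly nested leaf intervals unless every intermediate node is unary, in which case they form a chain of unary nodes along which the set of balanced colours grows strictly --- each unary node has the colours of its pebbles in the balanced set of its own subtree. This injectivity statement, with the care it demands for unary chains and one-leaf subtrees, is the technical heart of the proof.

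For the face-poset isomorphism, the key point is that a contraction $T \mapsto T/c$ preserves the leaf set and the total number of pebbles of each colour --- hence the set of balanced colours --- of every surviving subtree, and identifies the subtrees of~$T/c$ with the subtrees of~$T$ other than~$T_c$. Since $\lambda(T_c)$ is a genuine vertex (a non-leaf subtree at a non-root node has nonempty~$\lambda$) and, by the injectivity above, differs from every other~$\lambda(S)$, this gives $\Lambda(T/c) = \Lambda(T) \ssm \{\lambda(T_c)\}$ with $\lambda(T_c) \notin \Lambda(T/c)$. Consequently the vertices of~$\Lambda(T)$ are in bijection with the contractible nodes of~$T$, every subset of~$\Lambda(T)$ is again of the form~$\Lambda(T')$ (so the family $\{\Lambda(T)\}$ is downward closed and $\pebbleTreeComplex$ is a bona fide simplicial complex, the corolla corresponding to the empty face), and $T \mapsto \Lambda(T)$ is an inclusion-preserving and inclusion-reflecting bijection from~$\pebbleTreePoset$ onto the face poset of~$\pebbleTreeComplex$. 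Its injectivity also follows, since one recovers~$T$ --- both its shape and its pebbles --- from the nested family of pairs $(\leaves,\balanced)$ attached to the subtrees of~$T$.

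It remains to check that $\pebbleTreeComplex$ is a pseudomanifold. Purity is immediate from \cref{rem:rankedContractionPoset}: all maximal pebble trees have the same number $\ell(1+b+u)-u-1$ of nodes, so all facets have the same dimension. For the codimension-one condition, note that a pebble tree~$T$ with one fewer node than a maximal one has exactly one node~$n$ that is not \emph{terminal} --- that is, neither a unary node carrying a single pebble nor a binary node carrying no pebble --- since a single un-contraction affects only the un-contracted node and the node it creates, and a maximal tree above~$T$ is obtained by un-contracting~$n$ into two terminal nodes. A short case analysis on the type of~$n$ --- unary with two pebbles, binary with one pebble, or ternary with no pebble --- then shows there are exactly two such un-contractions: the balance condition on subtrees, together with the ban on chains of $|\pebbleColors|+1$ unary nodes, eliminates the other naive candidates. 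Hence every codimension-one face lies in exactly two facets, so $\pebbleTreeComplex$ is a closed pseudomanifold. The main obstacles are the two case analyses just flagged; the rest is bookkeeping around the encoding~$\boxtimes$.
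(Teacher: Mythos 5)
Your proposal takes essentially the same route as the paper's proof: the identity $\Lambda(T/c)=\Lambda(T)\ssm\{\lambda(T_c)\}$ yields the isomorphism of face posets, purity follows from the rank function, and the pseudomanifold property rests on the same classification of the unique non-terminal node of a corank-one tree (unary with two pebbles, binary with one pebble, ternary with no pebble), each type admitting exactly two un-contractions. Your additional verification that $\lambda$ is injective on the subtrees of a fixed tree is a detail the paper leaves implicit, and the deferred case analysis you flag is precisely the paper's concluding argument, so the proof is correct.
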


\begin{proof}
Observe first that~$\Lambda(T/n) = \Lambda(T) \ssm \{\lambda(T_n)\}$ for any node~$n$ (not a leaf, nor the root) in a pebble tree~$T$.
Hence, the face poset of~$\pebbleTreeComplex$ is indeed isomorphic to~$\pebbleTreePoset$.
We thus obtain that~$\pebbleTreeComplex$ is a pure simplicial complex since~$\pebbleTreePoset$ is a ranked simplicial poset.
It remains to prove that~$\pebbleTreeComplex$ is a pseudomanifold, meaning that any ridge (\ie codimension~$1$ face) is contained in precisely two facets (\ie maximal dimensional faces).
Consider thus a pebble tree of corank~$1$, and let~$p$ be the only node which is neither unary with a pebble, nor binary with no pebble.
We distinguish two cases:
\begin{itemize}
\item If~$p$ has three children and no pebble, then there are two ways to open~$p$ as usual (see \cref{fig:pseudomanifold}\,(left)) and it does not matter whether~$p$ and its children are balanced or not for each pebble color.
\item If~$p$ has two children and a $\pebbleColor$-pebble, then $p$ is $\pebbleColor$-balanced, and there are still two ways to open~$p$ depending on whether its children are $\pebbleColor$-balanced or not (see \cref{fig:pseudomanifold}\,(middle)).
\item If~$p$ has one child and two pebbles of different colors, then there are still two ways to open~$p$ choosing which pebble goes in the parent and which pebble goes in the child (see \cref{fig:pseudomanifold}\,(right)).
\qedhere
\end{itemize}
\begin{figure}
	\centerline{
		\begin{tikzpicture}[xscale=.8, yscale=2]
			\node (R) at (-1,1) {\pebbleTree{[ [ [ [ $X$ ] [ $Y$ ] ] [ $Z$ ] ] ]}[9pt]};
			\node (S) at (0,0) {\pebbleTree{[ [ [ $X$ ] [ $Y$ ] [ $Z$ ] ] ]}[9pt]};
			\node (T) at (1,1) {\pebbleTree{[ [ [ $X$ ] [ [ $Y$ ] [ $Z$ ] ] ] ]}[9pt]};
			\draw[color=gray] (S.north) -- (R.south);
			\draw[color=gray] (S.north) -- (T.south);
		\end{tikzpicture}
		\quad
		\begin{tikzpicture}[xscale=.8, yscale=2]
			\node (R) at (-1,1) {\pebbleTree{[ [ [ $\bullet$ [ $X$ ] ] [ $\mathrlap{\phantom{Y}}\smash{Y_\bullet}$ ] ] ]}};
			\node (S) at (0,0) {\pebbleTree{[ [ $\bullet$ [ $X$ ] [ $\mathrlap{\phantom{Y}}\smash{Y_\bullet}$ ] ] ]}};
			\node (T) at (1,1) {\pebbleTree{[ [ $\bullet$ [ [ $X$ ] [ $\mathrlap{\phantom{Y}}\smash{Y_\bullet}$ ] ] ] ]}};
			\draw[color=gray] (S.north) -- (R.south);
			\draw[color=gray] (S.north) -- (T.south);
		\end{tikzpicture}
		\quad
		\begin{tikzpicture}[xscale=.8, yscale=2]
			\node (R) at (-1,1) {\pebbleTree{[ [ [ $\mathrlap{\phantom{X}}\smash{X_\bullet}$ ] [ $\bullet$ [ $Y$ ] ] ] ]}};
			\node (S) at (0,0) {\pebbleTree{[ [ $\bullet$ [ $\mathrlap{\phantom{X}}\smash{X_\bullet}$ ] [ $Y$ ] ] ]}};
			\node (T) at (1,1) {\pebbleTree{[ [ $\bullet$ [ [ $\mathrlap{\phantom{X}}\smash{X_\bullet}$ ] [ $Y$ ] ] ] ]}};
			\draw[color=gray] (S.north) -- (R.south);
			\draw[color=gray] (S.north) -- (T.south);
		\end{tikzpicture}
		\quad
		\begin{tikzpicture}[xscale=.8, yscale=2]
			\node (R) at (-1,1) {\pebbleTree{[ [ [ $\bullet$ [ $X$ ] ] [ $Y$ ] ] ]}};
			\node (S) at (0,0) {\pebbleTree{[ [ $\bullet$ [ $X$ ] [ $Y$ ] ] ]}};
			\node (T) at (1,1) {\pebbleTree{[ [ [ $X$ ] [ $\bullet$ [ $Y$ ] ] ] ]}};
			\draw[color=gray] (S.north) -- (R.south);
			\draw[color=gray] (S.north) -- (T.south);
		\end{tikzpicture}
		\quad
		\begin{tikzpicture}[xscale=.8, yscale=2]
			\node (R) at (-1,1) {\pebbleTree{[ [ $\circ$ [ $\bullet$ [ $X$ ] ] ] ]}};
			\node (S) at (0,0) {\pebbleTree{[ [ $\circ{\bullet}$ [ $X$ ] ] ]}};
			\node (T) at (1,1) {\pebbleTree{[ [ $\bullet$ [ $\circ$ [ $X$ ] ] ] ]}};
			\draw[color=gray] (S.north) -- (R.south);
			\draw[color=gray] (S.north) -- (T.south);
		\end{tikzpicture}
	}
	\caption{The pebble tree complex is a pseudomanifold. All possible corank $1$ pebble trees are obtained by contracting precisely two maximal pebble trees. In the second (resp.~third) picture, we mark~$Y$ (resp.~$X$) with a $\bullet$ to indicate that it is $\bullet$-balanced. Neither~$X$ nor~$Y$ are $\bullet$-balanced in the fourth picture.}
	\label{fig:pseudomanifold}
\end{figure}
\end{proof}

\begin{remark}
In contrast to the special situations of \cref{exm:pebbleTrees}, the pebble tree complex is not flag in general.
For instance, $\pebbleTreeComplex[2][1,0]$ and $\pebbleTreeComplex[3][0,1]$ are not flag.
\end{remark}

Finally, we translate \cref{prop:pebbleTreePosetOperations,rem:bijectionPebbleTreesOrientedTrees2} to the pebble tree complex. % from the pebble tree poset to the pebble tree complex.

\begin{proposition}
\label{prop:pebbleTreeComplexOperations}
Consider the operations of \cref{def:mirroringMap,def:balancingMap,def:insertingMap,def:rerootingMap,def:uprootingMap}.
\begin{enumerate}
\item \label{prop:pebbleTreeComplexMirroringMap} The map defined by $\ell j + i - \delta_{j \ne 0} \mapsto \ell (j + 1) - i$ for any~$(i,j) \in  ([\ell] \times [0,b+u]) \ssm \{(\ell,0)\}$ induces an automorphism of the pebble tree complex~$\pebbleTreeComplex[\ell][b,u]$.
\item \label{prop:pebbleTreeComplexBalancingMap} If~$u > 1$, the pebble tree complex~$\pebbleTreeComplex$ is isomorphic to the link of the face~$[\ell] \boxtimes [b]$ in the pebble tree complex~$\pebbleTreeComplex[\ell][b+1,u-1]$. Hence, $\pebbleTreeComplex$ is isomorphic to a link of~$\pebbleTreeComplex[\ell][b+u,0]$.
\item \label{prop:pebbleTreeComplexInsertingMap} The pebble tree complex~$\pebbleTreeComplex$ is isomorphic to the link of the face~$\set{\{i\} \boxtimes [1]}{i \in [\ell]}$ in the pebble tree complex~$\pebbleTreeComplex[\ell][b+1,u]$.
\item \label{prop:pebbleTreeComplexRerootingMap} The rerooting maps of \cref{def:rerootingMap} induce automorphisms of the pebble tree complex~$\pebbleTreeComplex[\ell][b,0]$.
\item \label{prop:pebbleTreeComplexUprootingMap} If~$\ell > 1$, the pebble tree complex~$\pebbleTreeComplex[\ell][0,u]$ is isomorphic to the link of the face~$\set{\{i\} \boxtimes [1]}{i \in [2,\ell-1]}$ in the pebble tree complex~$\pebbleTreeComplex[\ell-1][u+1,0]$.
\end{enumerate}
\end{proposition}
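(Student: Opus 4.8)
The plan is to derive everything from \cref{prop:pebbleTreeComplex} and \cref{prop:pebbleTreePosetOperations}, using two elementary facts. First, a simplicial complex is recovered from its face poset: any isomorphism between the face posets of two simplicial complexes is induced by a bijection of their vertex sets, hence is realised by an isomorphism of complexes; in particular every poset automorphism of a face poset is a simplicial automorphism. Second, for a face $F$ of a complex $\Delta$, the face poset of the link $\mathrm{link}_\Delta(F)$ is isomorphic, via $G \mapsto G \cup F$, to the principal upper set $\set{H \in \Delta}{H \supseteq F}$ of the face poset of $\Delta$. By \cref{prop:pebbleTreeComplex}, the map $T \mapsto \Lambda(T)$ identifies $\pebbleTreePoset$ with the face poset of $\pebbleTreeComplex$, and contracting a node $n$ of $T$ corresponds to deleting the vertex $\lambda(T_n)$ from the face associated with $T$; combining this with the two facts, the link of the face associated with a pebble tree $T_0$ is, as a complex, the realisation of the principal upper set of $T_0$ in $\pebbleTreePoset$.

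Parts~\eqref{prop:pebbleTreeComplexMirroringMap} and~\eqref{prop:pebbleTreeComplexRerootingMap} follow at once: by \cref{prop:pebbleTreePosetOperations}\,\eqref{prop:pebbleTreePosetMirroringMap} and~\eqref{prop:pebbleTreePosetRerootingMap} the mirroring map is a poset automorphism of $\pebbleTreePoset[\ell][b,u]$ and each rerooting map is a poset automorphism of $\pebbleTreePoset[\ell][b,0]$, hence, by the first fact, a simplicial automorphism of the corresponding pebble tree complex. For~\eqref{prop:pebbleTreeComplexMirroringMap} I would moreover identify the induced vertex permutation with the stated map: mirroring reverses all children orders, so it sends a subtree spanning the leaf interval $[s,t]$ with balanced set $B$ to one spanning $[\ell+1-t,\ell+1-s]$ with the same $B$, and substituting this into $\lambda(S) = [s,t-1] \cup \bigcup_{p \in B}[\ell p + s - 1, \ell p + t - 1]$ shows that the ground-set element $\ell j + i - \delta_{j \ne 0}$ is sent to $\ell(j+1) - i$.

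For parts~\eqref{prop:pebbleTreeComplexBalancingMap},~\eqref{prop:pebbleTreeComplexInsertingMap} and~\eqref{prop:pebbleTreeComplexUprootingMap}, \cref{prop:pebbleTreePosetOperations} gives poset isomorphisms from the relevant $\pebbleTreePoset[\ell][b,u]$ onto the principal upper set of the ambient pebble tree contraction poset generated by $T_0$, the image under $\beta_\gamma$, $\iota_\gamma$ or $\theta_\gamma$ of the corolla that is the minimum of $\pebbleTreePoset[\ell][b,u]$. By the paragraph above, $\pebbleTreeComplex[\ell][b,u]$ is therefore isomorphic to the link, in the ambient pebble tree complex, of the face associated with $T_0$; it remains to identify that face by computing $\lambda(S)$ over the subtrees $S$ of $T_0$. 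For $\beta_\gamma$, the only subtree of $T_0$ not coming from the corolla or from the whole tree is the corolla sitting below the new unary root, with all $\ell$ leaves and balanced set $[b]$, giving the single vertex $[\ell] \boxtimes [b]$. For $\iota_\gamma$, the new subtrees are the $\gamma$-leaves over each leaf $i$, each with leaves $\{i\}$ and balanced set $\{\gamma\}$, giving the vertices $\{i\} \boxtimes [1]$, $i \in [\ell]$, after relabelling the fresh colour $\gamma$ as the colour $1$. For $\theta_\gamma$, the new subtrees are the $\gamma$-leaves over leaves $2, \dots, \ell-1$ of the uprooted tree, giving $\{i\} \boxtimes [1]$, $i \in [2,\ell-1]$. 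These are precisely the faces in the statement. The final chained claim in~\eqref{prop:pebbleTreeComplexBalancingMap} follows by iterating~\eqref{prop:pebbleTreeComplexBalancingMap} and using that a link of a link is a link.

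The two recollections carry all the conceptual weight; the one place that needs care — and the step I expect to be the most error-prone — is the bookkeeping in the last paragraph, namely tracking the leaf intervals, balanced sets and pebble counts of the distinguished subtrees of $\beta_\gamma(\text{corolla})$, $\iota_\gamma(\text{corolla})$ and $\theta_\gamma(\text{corolla})$ through the definitions of $\otimes$, $\boxtimes$ and $\lambda$, and confirming they yield exactly the index ranges $[\ell]\boxtimes[b]$, $\{i\}\boxtimes[1]$ for $i\in[\ell]$, and $\{i\}\boxtimes[1]$ for $i\in[2,\ell-1]$ appearing in the statement.
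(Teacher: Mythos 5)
Your proof is correct and takes exactly the route the paper intends: the proposition is presented there as a direct translation of \cref{prop:pebbleTreePosetOperations} through the face-poset isomorphism of \cref{prop:pebbleTreeComplex}, which is precisely your two recollections about face posets and links. Your bookkeeping also matches the computations the paper leaves implicit, namely the mirroring vertex map $\ell j + i - \delta_{j \ne 0} \mapsto \ell(j+1)-i$ and the distinguished faces $[\ell] \boxtimes [b]$, $\set{\{i\} \boxtimes [1]}{i \in [\ell]}$ and $\set{\{i\} \boxtimes [1]}{i \in [2,\ell-1]}$ coming from the images of the corolla under balancing, inserting and uprooting (with the fresh colour labelled $1$).
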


\begin{remark}
\label{rem:bijectionPebbleTreesOrientedTrees3}
Following \cref{rem:bijectionPebbleTreesOrientedTrees1,rem:bijectionPebbleTreesOrientedTrees2}, observe that for any signature~$\alpha \in \{\textsc{i}, \textsc{o}\}^{\ell+1}$, the $\alpha$-tree complex is isomorphic to the link of the face~$\set{\{i\} \boxtimes [1]}{\alpha_i = \textsc{i}}$ in the pebble tree complex~$\pebbleTreeComplex[\ell][1,0]$.
\end{remark}

%%%%%%%%%%

\subsection{Pebble tree flip graph}
\label{subsec:pebbleTreeFlipGraph}

As the pebble tree complex~$\pebbleTreeComplex$ is a pseudomanifold by \cref{prop:pebbleTreeComplex}, it is natural to consider its dual graph.

\begin{definition}
Two maximal pebble trees~$T$ and~$T'$ of~$\pebbleTreePoset$ are related by a \defn{flip} if there are nodes~$n$ of~$T$ and~$n'$ of~$T'$ such that the following equivalent conditions hold:
\begin{itemize}
\item the contraction $T/n$ coincides with the contraction~$T'/n'$.
\item $\Lambda(T) \ssm \{\lambda(T_n)\} = \Lambda(T') \ssm \{\lambda(T'_{n'})\}$,
\end{itemize}
All possible types of flips are illustrated in \cref{fig:pebbleTreeFlips}.
The \defn{flip graph} is the graph whose vertices are the maximal pebble trees of~$\pebbleTreePoset$ and whose edges are the flips between them.
\end{definition}

\pagebreak

\begin{example}
In the extreme situations of \cref{exm:pebbleTrees}:
\begin{itemize}
\item the flip graph on~$\pebbleTrees[1][b,u]$ is the graph of adjacent transpositions on \mbox{permutations of~$[b]$},
\item the flip graph on~$\pebbleTrees[\ell][0,0]$ is the rotation graph on binary trees with~$\ell$ leaves.
\end{itemize}
These extreme situations correspond to the right and left cases of \cref{fig:pebbleTreeFlips} respectively.
\cref{fig:exmPebbleTreeFlips} illustrates a sequence of flips in maximal pebble trees of~$\pebbleTrees[3][1,1]$.
\cref{fig:flipGraph301,fig:flipGraph211} illustrate the flip graphs on maximal pebble trees of~$\pebbleTrees[3][0,1]$ and~$\pebbleTrees[2][1,1]$ (which are isomorphic by \cref{prop:pebbleTreePosetOperations} as already mentioned before).

\begin{figure}
	\centerline{
		\pebbleTree{[ [ [ [ $X$ ] [ $Y$ ] ] [ $Z$ ] ] ]}[10pt] \raisebox{-1cm}{$\longleftrightarrow$} \pebbleTree{[ [ [ $X$ ] [ [ $Y$ ] [ $Z$ ] ] ] ]}[10pt]
		\quad
		\pebbleTree{[ [ [ $\bullet$ [ $X$ ] ] [ $\mathrlap{\phantom{Y}}\smash{Y_\bullet}$ ] ] ]} \raisebox{-1cm}{$\longleftrightarrow$} \pebbleTree{[ [ $\bullet$ [ [ $X$ ] [ $\mathrlap{\phantom{Y}}\smash{Y_\bullet}$ ] ] ] ]}[7.5pt]
		\quad 
		\pebbleTree{[ [ [ $\mathrlap{\phantom{X}}\smash{X_\bullet}$ ] [ $\bullet$ [ $Y$ ] ] ] ]} \raisebox{-1cm}{$\longleftrightarrow$} \pebbleTree{[ [ $\bullet$ [ [ $\mathrlap{\phantom{X}}\smash{X_\bullet}$ ] [ $Y$ ] ] ] ]}[7.5pt]
		\quad 
		\pebbleTree{[ [ [ $\bullet$ [ $X$ ] ] [ $Y$ ] ] ]} \raisebox{-1cm}{$\longleftrightarrow$} \pebbleTree{[ [ [ $X$ ] [ $\bullet$ [ $Y$ ] ] ] ]}
		\quad 
		\pebbleTree{[ [ $\circ$ [ $\bullet$ [ $X$ ] ] ] ]}[5.5pt] \raisebox{-1cm}{$\longleftrightarrow$} \pebbleTree{[ [ $\bullet$ [ $\circ$ [ $X$ ] ] ] ]}[5.5pt]
	}
	\caption{All possible flips in a maximal pebble tree. In the second (resp.~third) picture, we mark~$Y$ (resp.~$X$) with a $\bullet$ to indicate that it is $\bullet$-balanced. Neither~$X$ nor~$Y$ are $\bullet$-balanced in the fourth picture.}
	\label{fig:pebbleTreeFlips}
\end{figure}

\begin{figure}[p]
	\centerline{
		\pebbleTree{[ [ [ $\circ$ [ [ $\circ$ [ \leaf ] ] [ $\bullet$ [ \leaf ] ] ] ] [ $\circ$ [ $\bullet$ [ \leaf ] ] ] ] ]}
		\pebbleTree{[ [ $\circ$ [ [ $\circ$ [ [ $\circ$ [ \leaf ] ] [ $\bullet$ [ \leaf ] ] ] ] [ $\bullet$ [ \leaf ] ] ] ] ]}
		\pebbleTree{[ [ $\circ$ [ [ [ $\circ$ [ \leaf ] ] [ $\bullet$ [ \leaf ] ] ] [ $\circ$ [ $\bullet$ [ \leaf ] ] ] ] ] ]}
		\pebbleTree{[ [ $\circ$ [ [ $\circ$ [ \leaf ] ] [ [ $\bullet$ [ \leaf ] ] [ $\circ$ [ $\bullet$ [ \leaf ] ] ] ] ] ] ]}
		\pebbleTree{[ [ $\circ$ [ [ $\circ$ [ \leaf ] ] [ [ $\bullet$ [ \leaf ] ] [ $\bullet$ [ $\circ$ [ \leaf ] ] ] ] ] ] ]}
		\pebbleTree{[ [ $\circ$ [ [ $\circ$ [ \leaf ] ] [ $\bullet$ [ [ $\bullet$ [ \leaf ] ] [ $\circ$ [ \leaf ] ] ] ] ] ] ]}
		\pebbleTree{[ [ $\circ$ [ $\bullet$ [ [ $\circ$ [ \leaf ] ] [ [ $\bullet$ [ \leaf ] ] [ $\circ$ [ \leaf ] ] ] ] ] ] ]}
		\pebbleTree{[ [ $\circ$ [ $\bullet$ [ [ [ $\circ$ [ \leaf ] ] [ $\bullet$ [ \leaf ] ] ] [ $\circ$ [ \leaf ] ] ] ] ] ]}
		\pebbleTree{[ [ $\bullet$ [ $\circ$ [ [ [ $\circ$ [ \leaf ] ] [ $\bullet$ [ \leaf ] ] ] [ $\circ$ [ \leaf ] ] ] ] ] ]}
	}
	\caption{A sequence of flips in maximal $\circ$-balanced and $\bullet$-unbalanced $\{\circ, \bullet\}$-pebble trees.}
	\label{fig:exmPebbleTreeFlips}
\end{figure}

\begin{figure}[p]
	\centerline{
		\begin{tikzpicture}[xscale=1, yscale=.7]
			\node (a) at (0,-4) {\pebbleTree{[ [ [ $\bullet$ [ \leaf ] ] [ [ $\bullet$ [ \leaf ] ] [ \leaf ] ] ] ]}};
			\node (b) at (0,4) {\pebbleTree{[ [ [ [ \leaf ] [ $\bullet$ [ \leaf ] ] ] [ $\bullet$ [ \leaf ] ] ] ]}};
			\node (c) at (-8,-2) {\pebbleTree{[ [ [ \leaf ] [ $\bullet$ [ [ $\bullet$ [ \leaf ] ] [ \leaf ] ] ] ] ]}};
			\node (d) at (8,2) {\pebbleTree{[ [ [ $\bullet$ [ [ \leaf ] [ $\bullet$ [ \leaf ] ] ] ] [ \leaf ] ] ]}};
			\node (e) at (-8,2) {\pebbleTree{[ [ [ \leaf ] [ [ $\bullet$ [ \leaf ] ] [ $\bullet$ [ \leaf ] ] ] ] ]}};
			\node (f) at (8,-2) {\pebbleTree{[ [ [ [ $\bullet$ [ \leaf ] ] [ $\bullet$ [ \leaf ] ] ] [ \leaf ] ] ]}};
			\node (g) at (-5,0) {\pebbleTree{[ [ [ \leaf ] [ $\bullet$ [ [ \leaf ] [ $\bullet$ [ \leaf ] ] ] ] ] ]}};
			\node (h) at (5,0) {\pebbleTree{[ [ [ $\bullet$ [ [ $\bullet$ [ \leaf ] ] [ \leaf ] ] ] [ \leaf ] ] ]}};
			\node (i) at (-2,-1) {\pebbleTree{[ [ [ $\bullet$ [ \leaf ] ] [ [ \leaf ] [ $\bullet$ [ \leaf ] ] ] ] ]}};
			\node (j) at (2,1) {\pebbleTree{[ [ [ [ $\bullet$ [ \leaf ] ] [ \leaf ] ] [ $\bullet$ [ \leaf ] ] ] ]}};
			\draw[<->, color=gray] (a) -- (c);
			\draw[<->, color=gray] (b) -- (d);
			\draw[<->, color=gray] (c) -- (e);
			\draw[<->, color=gray] (d) -- (f);
			\draw[<->, color=gray] (e) -- (b);
			\draw[<->, color=gray] (f) -- (a);
			\draw[<->, color=gray] (c) -- (g);
			\draw[<->, color=gray] (d) -- (h);
			\draw[<->, color=gray] (e) -- (g);
			\draw[<->, color=gray] (f) -- (h);
			\draw[<->, color=gray] (g) -- (i);
			\draw[<->, color=gray] (h) -- (j);
			\draw[<->, color=gray] (a) -- (i);
			\draw[<->, color=gray] (b) -- (j);
			\draw[<->, color=gray] (i) -- (j);
		\end{tikzpicture}
	}
	\caption{The flip graph on pebble trees of~$\pebbleTrees[3][0,1]$.}
	\label{fig:flipGraph301}
\end{figure}

\begin{figure}[p]
	\centerline{
		\begin{tikzpicture}[xscale=1, yscale=.7]
			\node (a) at (0,-4) {\pebbleTree{[ [ $\circ$ [ [ \leaf ] [ $\bullet$ [ $\circ$ [ \leaf ] ] ] ] ] ]}};
			\node (b) at (0,4) {\pebbleTree{[ [ $\circ$ [ [ $\bullet$ [ $\circ$ [ \leaf ] ] ] [ \leaf ] ] ] ]}};
			\node (c) at (-8,-2) {\pebbleTree{[ [ $\circ$ [ [ \leaf ] [ $\circ$ [ $\bullet$ [ \leaf ] ] ] ] ] ]}};
			\node (d) at (8,2) {\pebbleTree{[ [ $\circ$ [ [ $\circ$ [ $\bullet$ [ \leaf ] ] ] [ \leaf ] ] ] ]}};
			\node (e) at (-8,2) {\pebbleTree{[ [ $\circ$ [ [ $\circ$ [ \leaf ] ] [ $\bullet$ [ \leaf ] ] ] ] ]}};
			\node (f) at (8,-2) {\pebbleTree{[ [ $\circ$ [ [ $\bullet$ [ \leaf ] ] [ $\circ$ [ \leaf ] ] ] ] ]}};
			\node (g) at (-5,0) {\pebbleTree{[ [ [ $\circ$ [ \leaf ] ] [ $\circ$ [ $\bullet$ [ \leaf ] ] ] ] ]}};
			\node (h) at (5,0) {\pebbleTree{[ [ [ $\circ$ [ $\bullet$ [ \leaf ] ] ] [ $\circ$ [ \leaf ] ] ] ]}};
			\node (i) at (-2,-1) {\pebbleTree{[ [ [ $\circ$ [ \leaf ] ] [ $\bullet$ [ $\circ$ [ \leaf ] ] ] ] ]}};
			\node (j) at (2,1) {\pebbleTree{[ [ [ $\bullet$ [ $\circ$ [ \leaf ] ] ] [ $\circ$ [ \leaf ] ] ] ]}};
			\draw[<->, color=gray] (a) -- (c);
			\draw[<->, color=gray] (b) -- (d);
			\draw[<->, color=gray] (c) -- (e);
			\draw[<->, color=gray] (d) -- (f);
			\draw[<->, color=gray] (e) -- (b);
			\draw[<->, color=gray] (f) -- (a);
			\draw[<->, color=gray] (c) -- (g);
			\draw[<->, color=gray] (d) -- (h);
			\draw[<->, color=gray] (e) -- (g);
			\draw[<->, color=gray] (f) -- (h);
			\draw[<->, color=gray] (g) -- (i);
			\draw[<->, color=gray] (h) -- (j);
			\draw[<->, color=gray] (a) -- (i);
			\draw[<->, color=gray] (b) -- (j);
			\draw[<->, color=gray] (i) -- (j);
		\end{tikzpicture}
	}
	\caption{The flip graph on pebble trees of~$\pebbleTrees[2][1,1]$.}
	\label{fig:flipGraph211}
\end{figure}
\end{example}

As the dual graph of a pure simplicial pseudomanifold, the pebble tree flip graph is regular.
Its degree is~$\ell (1 + b + u) - u - 2$.
As we will see in \cref{thm:pebbleTreePolytope1} that it is the graph of a simple polytope, it has the connectivity of its degree.
Among various further properties of this graph that would require more investigations, we mention the following problem in connection to~\cite{SleatorTarjanThurston, Pournin}.

\begin{problem}
Evaluate the diameter of the flip graph on maximal pebble trees of~$\pebbleTrees$.
\end{problem}

Finally, note that \cref{prop:pebbleTreePosetOperations,rem:bijectionPebbleTreesOrientedTrees2} directly translate to morphisms between the flip graphs on the corresponding trees.

%%%%%%%%%%%%%%%%%%%%%%%%%%%%%%%%%%%%%%

\section{Pebble tree geometry}
\label{sec:pebbleTreeGeometry}

\enlargethispage{.5cm}
This section is devoted to the geometry of pebble trees.
After quickly reminding some geometric preliminaries (\cref{subsec:geometricPreliminaries}), we construct the pebble tree fan (\cref{subsec:pebbleTreeFan}) and the pebble tree polytope (\cref{subsec:pebbleTreePolytope}).

%%%%%%%%%%

\subsection{Geometric preliminaries}
\label{subsec:geometricPreliminaries}

We refer to \cite{Ziegler-polytopes} for a reference on polyhedral geometry, and only remind the basic notions needed later in the paper.

A (polyhedral) \defn{cone} is the positive span~$\R_{\ge0}\rays[]$ of a finite set~$\rays[]$ of vectors of~$\R^d$ or equivalently, the intersection of finitely many closed linear half-spaces of~$\R^d.$ 
The \defn{faces} of a cone are its intersections with its supporting hyperplanes. 
The \defn{rays} (resp.~\defn{facets}) are the faces of dimension~$1$ (resp.~ codimension~$1$).
A cone is \defn{simplicial} if its rays are linearly independent.
A (polyhedral) \defn{fan}~$\Fan$ is a set of cones such that any face of a cone of~$\Fan$ belongs to~$\Fan$, and any two cones of~$\Fan$ intersect along a face of both. 
A fan is \defn{essential} if the intersection of its cones is the origin, \defn{complete} if the union of its cones covers~$\R^d$, and \defn{simplicial} if all its cones are simplicial.

Note that a simplicial fan defines a simplicial complex on its rays (the simplices of the simplicial complex are the subsets of rays which span a cone of the fan).
Conversely, given a simplicial complex~$\Delta$ with ground set~$V$, one can try to realize it geometrically by associating a ray~$\ray[v]$ of~$\R^d$ to each~$v \in V$, and the cone~$\R_{\ge0}\rays[\triangle]$ generated by the set~$\rays[\triangle] \eqdef \set{\ray[v]}{v \in \triangle}$ to each~$\triangle \in \Delta$.
To show that the resulting cones indeed form a fan, we will need the following statement, which can be seen as a reformulation of~\cite[Coro.~4.5.20]{DeLoeraRambauSantos}.

\begin{proposition}
\label{prop:characterizationFan}
Consider a closed simplicial pseudomanifold~$\Delta$ with ground set~$V$ and a set of vectors~$(\ray[v])_{v \in V}$ of~$\R^d$, and define~$\rays[\triangle] \eqdef \set{\ray[v]}{v \in \triangle}$ for any~$\triangle \in \Delta$.
Then the collection of cones~$\set{\R_{\ge 0}\rays[\triangle]}{\triangle \in \Delta}$ forms a complete simplicial fan of~$\R^d$ if and~only~if
\begin{itemize}
\item there exists a vector~$\b{v}$ of~$\R^d$ contained in only one of the open cones~$\R_{> 0}\rays[\triangle]$ for~$\triangle \in \Delta$,
\item for any two adjacent facets~$\triangle, \triangle'$ of~$\Delta$ with~$\triangle \ssm \{v\} = \triangle' \ssm \{v'\}$, we have~$\alpha_v \alpha_{v'} > 0$~where
\[
\alpha_v \, \ray[v] + \alpha_{v'} \, \ray[v'] + \sum_{w \in \triangle \cap \triangle'} \alpha_w \, \ray[w] = 0
\]
denotes the unique (up to rescaling) linear dependence on~$\rays[\triangle \cup \triangle']$.
\end{itemize}
\end{proposition}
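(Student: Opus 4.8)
The plan is to treat the two implications separately; the forward one is elementary, while the backward one is the substance and is essentially the content of \cite[Coro.~4.5.20]{DeLoeraRambauSantos}, which I would ultimately invoke once the hypotheses are matched up. \emph{Necessity:} suppose $\set{\R_{\ge0}\rays[\triangle]}{\triangle\in\Delta}$ is a complete simplicial fan $\Fan$. Its inclusion-maximal cones are then full-dimensional with pairwise disjoint interiors, while for non-maximal~$\triangle$ the cone $\R_{>0}\rays[\triangle]$ is relatively open of dimension~$<d$; since finitely many such lower-dimensional sets cannot cover~$\R^d$, some~$\b v$ lies in the interior of a maximal cone and in none of the lower cones, hence in exactly one open cone $\R_{>0}\rays[\triangle]$. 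For the second item, let $\triangle,\triangle'$ be adjacent facets with $\triangle\ssm\{v\}=\triangle'\ssm\{v'\}$ and let~$H$ be the hyperplane spanned by $\rays[\triangle\cap\triangle']$. The maximal cones $\R_{\ge0}\rays[\triangle]$ and $\R_{\ge0}\rays[\triangle']$ must lie in the two opposite closed half-spaces of~$H$, as otherwise their interiors would overlap near $\R_{>0}\rays[\triangle\cap\triangle']$, contradicting the fan axioms; so for any linear form~$h$ with $H=h^{-1}(0)$ the values $h(\ray[v])$ and $h(\ray[v'])$ are nonzero of opposite signs. Applying~$h$ to the displayed dependence relation gives $\alpha_v\,h(\ray[v])=-\alpha_{v'}\,h(\ray[v'])$, whence $\alpha_v\alpha_{v'}>0$.

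\emph{Sufficiency:} assume the two displayed conditions. I would first record two local consequences. Fix a ridge~$\sigma$ of~$\Delta$, contained in the facets $\triangle=\sigma\cup\{v\}$ and $\triangle'=\sigma\cup\{v'\}$; uniqueness up to scaling of the dependence on $\rays[\triangle\cup\triangle']$ means these $d+1$ vectors have rank exactly~$d$, and since $\alpha_{v'}\ne0$ that relation expresses $\ray[v']$ in terms of $\rays[\triangle]$, so $\rays[\triangle]$ already spans~$\R^d$; as $|\triangle|=d$, the cone $\R_{\ge0}\rays[\triangle]$ is full-dimensional simplicial, and propagating this along the connected dual graph of the pseudomanifold, so is every facet cone. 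Second, the sign condition $\alpha_v\alpha_{v'}>0$ says precisely that $\ray[v]$ and $\ray[v']$ lie on opposite sides of $H=\vect(\rays[\sigma])$, so $\R_{\ge0}\rays[\triangle]$ and $\R_{\ge0}\rays[\triangle']$ meet exactly along the wall $\R_{\ge0}\rays[\sigma]$ and their union is a neighbourhood of its relative interior. Passing to the unit sphere, these two facts say that the piecewise-linear radial map~$\Phi$ sending, for each facet~$\triangle$, the geometric simplex on vertex set~$\triangle$ homeomorphically onto $\R_{\ge0}\rays[\triangle]\cap S^{d-1}$, is a local homeomorphism on the complement in $|\Delta|$ of the codimension-$2$ skeleton, and its image meets every full-dimensional region of~$S^{d-1}$.

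\emph{The topological core, and the main obstacle:} the remaining step is a covering/degree argument. Since $|\Delta|$ is compact, $\Phi$ is proper; over the complement in~$S^{d-1}$ of a subset of codimension~$\ge2$ (the image of the codimension-$2$ skeleton) it is a proper local homeomorphism, hence a covering map onto that connected space, so its number of sheets --- the number of facet cones containing a fixed generic ray --- is constant, and the genericity hypothesis forces it to equal~$1$. As~$\Delta$ is connected, $\Phi$ is then a homeomorphism $|\Delta|\to S^{d-1}$, and coning from the origin shows the cones $\R_{\ge0}\rays[\triangle]$ cover~$\R^d$, have pairwise disjoint interiors, and intersect pairwise along common faces --- i.e.\ they form a complete simplicial fan (the case $d\le1$ being immediate). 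I expect the careful handling of this last step --- in particular ruling out long-range overlaps between cones of non-adjacent facets and extending the covering cleanly across the codimension-$2$ locus --- to be the genuine obstacle, so the efficient route is to phrase~$\Delta$ and $(\ray[v])_{v\in V}$ in the framework of \cite[Coro.~4.5.20]{DeLoeraRambauSantos} and apply it after checking that its hypotheses (a closed pseudomanifold, the across-a-ridge transversality/sign condition, and the existence of a generically covered ray) are exactly those at hand.
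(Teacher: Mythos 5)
The paper gives no proof of this proposition beyond noting that it is a reformulation of \cite[Coro.~4.5.20]{DeLoeraRambauSantos}, and your argument ultimately rests on invoking that same corollary, so the approach is essentially the paper's. Your elementary proof of necessity and your sketch of the sufficiency (degree/covering) argument are sound as outlines, and you correctly flag that the remaining topological details are precisely what the cited result supplies.
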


A \defn{polytope} is the convex hull of finitely many points of~$\R^d$ or equivalently, a bounded intersection of finitely many closed affine half-spaces of~$\R^d$.
The \defn{faces} of a polytope are its intersections with its supporting hyperplanes.
The \defn{vertices} (resp.~\defn{edges}, resp.~\defn{facets}) are the faces of dimension~$0$ (resp.~dimension~$1$, resp.~codimension~$1$).

The \defn{normal cone} of a face~$\polytope{F}$ of a polytope~$\polytope{P}$ is the cone generated by the normal vectors to the supporting hyperplanes of~$\polytope{P}$ containing~$\polytope{F}$.
Said differently, it is the cone of vectors~$\b{c}$ of~$\R^d$ such that the linear form~$\b{x} \mapsto \dotprod{\b{c}}{\b{x}}$ on~$\polytope{P}$ is maximized by all points of the face~$\polytope{F}$.
The \defn{normal fan} of~$\polytope{P}$ is the set of normal cones of all its faces.

Consider now a complete simplicial fan~$\Fan$ of~$\R^d$ with rays~$(\ray[v])_{v \in V}$ and cones~$\R_{\ge0} \rays[\triangle]$ for~${\triangle \in \Delta}$, where~$\rays[\triangle] \eqdef \set{\ray[v]}{v \in \triangle}$ as in \cref{prop:characterizationFan}.
To realize the fan~$\Fan$, one can try to pick a height vector~$\b{h} \eqdef (h_v)_{v \in V} \in \R^V$ and consider the polytope
\(
\polytope{P}_\b{h} \eqdef \set{\b{x} \in \R^d}{\dotprod{\ray[v]}{\b{x}} \le h_v \text{ for all } v \in V}.
\)
The following classical statement characterizes the height vectors~$\b{h}$ for which the fan~$\Fan$ is the normal fan of this polytope~$\polytope{P}_\b{h}$.
We borrow the formulation from~\cite[Lem.~2.1]{ChapotonFominZelevinsky}.

\begin{proposition}
\label{prop:characterizationPolytopalFan}
Let~$\Fan$ be an essential complete simplicial fan in~$\R^n$ with rays~$(\ray[v])_{v \in V}$ and cones~$\R_{\ge0} \rays[\triangle]$ for~$\triangle \in \Delta$.
Then the following are equivalent for any height vector~$\b{h} \in \R^V$:
\begin{itemize}
\item The fan~$\Fan$ is the normal fan of the polytope~$\polytope{P}_\b{h} \eqdef \set{\b{x} \in \R^d}{\dotprod{\ray[v]}{\b{x}} \le h_v \text{ for all } v \in V}$.
\item For two adjacent facets~$\triangle, \triangle'$ of~$\Delta$ with~$\triangle \ssm \{v\} = \triangle' \ssm \{v'\}$, the height vector~$\b{h}$ satisfies the \defn{wall crossing inequality}
\[
\alpha_v \, h_v + \alpha_{v'} \, h_{v'} + \sum_{w \in \triangle \cap \triangle'} \alpha_w \, h_w > 0
\]
where
\[
\alpha_v \, \ray[v] + \alpha{v'} \, \ray[v'] + \sum_{w \in \triangle \cap \triangle'} \alpha_w \, \ray[w] = 0
\]
denotes the unique linear dependence on~$\rays[\triangle \cup \triangle']$ such that~$\alpha_v + \alpha_{v'} = 2$.
\end{itemize}
\end{proposition}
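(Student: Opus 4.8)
The plan is to repackage the height vector~$\b{h}$ as a piecewise-linear function on~$\Fan$ and reduce the statement to the classical principle that such a function is convex if and only if it is locally convex across every wall. Since~$\Fan$ is essential, complete and simplicial, for every facet~$\triangle$ of~$\Delta$ the set of vectors~$\rays[\triangle]$ is a linear basis of the ambient space, so there is a unique linear form~$\varphi_\triangle$ with~$\varphi_\triangle(\ray[v]) = h_v$ for all~$v \in \triangle$; write~$\b{x}_\triangle$ for the point it represents, so that~$\dotprod{\ray[v]}{\b{x}_\triangle} = h_v$ for~$v \in \triangle$. I would then define~$f_\b{h} \colon \R^d \to \R$ to be the function agreeing with~$\varphi_\triangle$ on each maximal cone~$\R_{\ge0}\rays[\triangle]$. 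This is well defined and continuous because two adjacent forms~$\varphi_\triangle$ and~$\varphi_{\triangle'}$ agree on~$\rays[\triangle \cap \triangle']$, which spans the hyperplane containing the common wall~$\R_{\ge0}\rays[\triangle \cap \triangle']$. Writing any~$\b{c}$ inside a maximal cone as a nonnegative combination of its rays moreover shows that~$\polytope{P}_\b{h} = \set{\b{x} \in \R^d}{\dotprod{\b{c}}{\b{x}} \le f_\b{h}(\b{c}) \text{ for all } \b{c} \in \R^d}$.

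Next I would reduce the equivalence to a statement about~$f_\b{h}$ alone. If~$f_\b{h}$ is convex, then (being positively homogeneous, and with each~$\b{x}_\triangle \in \polytope{P}_\b{h}$ by the crux step below) it is the support function of~$\polytope{P}_\b{h}$, whose normal fan is the coarsening of~$\Fan$ that merges two adjacent maximal cones exactly when~$\varphi_\triangle = \varphi_{\triangle'}$; thus~$\Fan$ is the normal fan of~$\polytope{P}_\b{h}$ if and only if~$f_\b{h}$ is convex and~$\varphi_\triangle \ne \varphi_{\triangle'}$ for every pair of adjacent facets. Conversely, if~$\Fan$ is the normal fan of~$\polytope{P}_\b{h}$, then each ray~$\ray[v]$ spans the normal cone of a facet, so the support function of~$\polytope{P}_\b{h}$ takes value~$h_v$ at~$\ray[v]$ and is linear on each maximal cone, hence coincides with~$f_\b{h}$; being a support function it is convex, and since adjacent maximal cones of~$\Fan$ correspond to distinct vertices it is strictly bent across every wall. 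So it remains to show that ``$f_\b{h}$ convex and strictly bent at every wall'' is exactly the system of wall crossing inequalities.

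The crux is the local-to-global step: a function linear on each cone of a complete fan is convex if and only if its restriction to~$\R_{\ge0}\rays[\triangle] \cup \R_{\ge0}\rays[\triangle']$ is convex for every pair of adjacent facets~$\triangle, \triangle'$. One direction is immediate; for the other I would fix~$\triangle$ and prove~$\varphi_\triangle \le f_\b{h}$ on all of~$\R^d$, which gives~$f_\b{h} = \max_\triangle \varphi_\triangle$ and hence convexity. For this, pick a generic point~$\b{p}$ in the interior of~$\R_{\ge0}\rays[\triangle]$ and an arbitrary generic point~$\b{q}$; the segment~$[\b{p},\b{q}]$ misses the codimension-two skeleton of~$\Fan$, hence crosses a chain of maximal cones with consecutive members sharing a wall. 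The piecewise-linear function~$t \mapsto f_\b{h} - \varphi_\triangle$ restricted to this segment starts at~$0$ with slope~$0$ and, by local convexity at each wall crossed, has nondecreasing slope, so it stays nonnegative; thus~$f_\b{h}(\b{q}) \ge \varphi_\triangle(\b{q})$, and~$\varphi_\triangle \le f_\b{h}$ everywhere by continuity. Keeping track of strictness, the domains on which~$f_\b{h} = \max_\triangle \varphi_\triangle$ is linear are exactly the maximal cones of~$\Fan$ precisely when the slope strictly increases at each wall crossing, \ie when~$f_\b{h}$ is strictly bent at every wall.

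Finally I would translate local convexity at the wall between adjacent facets~$\triangle, \triangle'$, with~$\triangle \ssm \{v\} = \triangle' \ssm \{v'\}$ and~$W \eqdef \triangle \cap \triangle'$, into the stated inequality. The difference~$\varphi_\triangle - \varphi_{\triangle'}$ vanishes on~$\vect(\rays[W])$, hence has constant sign on the side of this hyperplane containing~$\R_{\ge0}\rays[\triangle']$, and local convexity (resp.~strict bending) there means~$\varphi_\triangle \le \varphi_{\triangle'}$ (resp.~$\varphi_\triangle < \varphi_{\triangle'}$) on that side, which, tested at~$\ray[v']$, reads~$\varphi_\triangle(\ray[v']) \le h_{v'}$ (resp.~$\varphi_\triangle(\ray[v']) < h_{v'}$). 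Evaluating~$\varphi_\triangle$ on the dependence~$\alpha_v \ray[v] + \alpha_{v'} \ray[v'] + \sum_{w \in W} \alpha_w \ray[w] = 0$ and using~$\varphi_\triangle(\ray[v]) = h_v$ and~$\varphi_\triangle(\ray[w]) = h_w$ gives~$\alpha_{v'}\, \varphi_\triangle(\ray[v']) = -\alpha_v h_v - \sum_{w \in W} \alpha_w h_w$. Since~$\ray[v]$ and~$\ray[v']$ lie on opposite sides of~$\vect(\rays[W])$ while~$\alpha_v \ray[v] + \alpha_{v'} \ray[v']$ lies inside it, the coefficients~$\alpha_v, \alpha_{v'}$ are nonzero of the same sign, so the normalization~$\alpha_v + \alpha_{v'} = 2$ makes both positive; multiplying the strict inequality~$\varphi_\triangle(\ray[v']) < h_{v'}$ by~$\alpha_{v'} > 0$ then yields exactly~$\alpha_v h_v + \alpha_{v'} h_{v'} + \sum_{w \in W} \alpha_w h_w > 0$. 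The only step with genuine content is the local-to-global convexity principle; the rest is bookkeeping, the one delicate point being to track strictness so as to obtain equality of fans rather than merely a coarsening.
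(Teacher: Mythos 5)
Your proposal is correct, but note that the paper does not prove \cref{prop:characterizationPolytopalFan} at all: it is quoted as a classical statement, borrowed from \cite[Lem.~2.1]{ChapotonFominZelevinsky}, so there is no internal proof to compare against. Your argument is essentially the standard proof of that lemma: encode~$\b{h}$ as the piecewise-linear function~$f_{\b{h}}$ that is linear on each maximal cone of~$\Fan$, observe that~$\polytope{P}_{\b{h}}$ is the convex body it supports, prove the local-to-global convexity principle by walking a generic segment through a wall-connected chain of maximal cones and tracking the (nondecreasing) slope of~$f_{\b{h}} - \varphi_\triangle$, and translate local (strict) convexity across the wall~$\triangle \cap \triangle'$ into the wall-crossing inequality via the sign analysis showing~$\alpha_v, \alpha_{v'} > 0$ under the normalization~$\alpha_v + \alpha_{v'} = 2$. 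Two points are glossed but routine: (i) the identification of the normal fan of~$\polytope{P}_{\b{h}}$ with the coarsening of~$\Fan$ obtained by merging adjacent cones exactly when~$\varphi_\triangle = \varphi_{\triangle'}$ implicitly uses that each maximal normal cone is a wall-connected union of cones of~$\Fan$, so that ``no two adjacent forms agree'' really forces the normal fan to equal~$\Fan$ rather than some coarser fan; and (ii) in the converse direction, the claim that the support function of~$\polytope{P}_{\b{h}}$ takes the value~$h_v$ at~$\ray[v]$ uses that distinct rays of a fan are not positive multiples of one another, so the facet of~$\polytope{P}_{\b{h}}$ whose normal cone is~$\R_{\ge0}\ray[v]$ can only be cut out by the inequality indexed by~$v$. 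With these two standard observations made explicit, your proof is complete and matches the argument behind the cited reference.
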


%%%%%%%%%%

\subsection{Pebble tree fan}
\label{subsec:pebbleTreeFan}

Fix~$\ell, b, u \in \N$ and consider the intervals
\[
I_0 \eqdef [\ell (b+1)-1]
\qquad\text{and}\qquad
I_i \eqdef [\ell (b+i), \ell (b+i+1) - 1]
\quad\text{for all } i \in [u]
\]
whose union is the interval
\[
I \eqdef I_0 \sqcup I_1 \sqcup \dots \sqcup I_u = [\ell(b+u+1)-1].
\]
We work in the Euclidean space~$\R^I$ with canonical basis~$(\b{e}_i)_{i \in I}$.
We denote by~$\smash{\one_J \eqdef \sum_{j \in J} \b{e}_j}$ the characteristic vector of a subset~$J \subseteq I$.
As our constructions actually live in the linear subspace
\[
\HH_\ell^{b,u} \eqdef \set{\b{x} \in \R^I}{\dotprod{\one_{I_i}}{\b{x}} = 0 \text{ for all } 0 \le i \le u},
\]
we define the vector
\[
\ray[J] \eqdef \sum_{i = 0}^u \big( |I_i \ssm J| \cdot \one_{I_i \cap J} - |I_i \cap J| \cdot \one_{I_i \ssm J} \big) \in \HH_\ell^{b,u}
\]
for each subset~$J \subseteq I$.
It is immediate to check that these vectors satisfy the linear dependences
\[
\ray[J] + \ray[K] = \ray[J \cup K] + \ray[J \cap K]
\]
for any~$J, K \subseteq I$.
Finally, we associate to any pebble subtree~$S$ the vector~$\ray[S] \eqdef \ray[\lambda(S)] = \ray[\leaves \boxtimes \balanced]$ where~$\leaves$ and~$\balanced$ denote the sets of leaves and of balanced colors in~$S$, and the operation~$\boxtimes$ was defined in \cref{def:encoding}.
Note that~$\ray[S] = \b{0}$ when~$S$ is the entire tree~$T$ (because~$\leaves[T] = [\ell]$ and~$\balanced[T] = [b]$ so that~$\lambda(T) = I_0$) or when~$S$ is a leaf~$i$ (because~$\leaves[S] = \{i\}$ and~$\balanced[S] = \varnothing$ so that~$\lambda(S) = \varnothing$).
We now use these vectors~$\ray[S]$ to construct the pebble tree fan.

\begin{definition}
\label{def:pebbleTreeFan}
The \defn{pebble tree fan}~$\pebbleTreeFan$ is the collection of cones~$\Cone(T) \eqdef \cone \set{\ray[S]}{S \text{ subtree of } T}$ for all pebble trees~$T \in \pebbleTrees$, where~$\ray[S] \eqdef \ray[\lambda(S)] = \ray[\leaves \boxtimes \balanced]$.
\end{definition}

\pagebreak

\begin{example}
\label{exm:braidFanSylvesterFan}
In the extreme situations of \cref{exm:pebbleTrees}:
\begin{itemize}
\item the pebble tree fan~$\pebbleTreeFan[1][b,u]$ is the braid fan, with a ray~$\ray[J]$ for each proper subset~${\varnothing \neq J \subsetneq [b]}$ and a maximal cone~$\Cone(\sigma)$ for each permutation~$\sigma$ of~$[b]$, defined by the inequalities ${x_{\sigma(1)} \le \dots \le x_{\sigma(b)}}$,
\item the pebble tree fan~$\pebbleTreeFan[\ell][0,0]$  is the sylvester fan, with a ray~$\ray[J]$ for each proper interval~$J$ of~$[\ell]$ and a maximal cone $\Cone(T)$ for each binary tree~$T$, defined by the inequalities~$x_i \le x_j$ whenever there is a path from~$i$ to~$j$ in the tree~$T$ labeled in inorder and oriented towards its root. 
\end{itemize}
Note that the sylvester fan coarsens the braid fan: the cone~$\Cone(T)$ of the sylvester fan can also be obtained by glueing the cones~$\Cone(\sigma)$ of the braid fan corresponding to the linear extensions~$\sigma$ of~$T$.
\end{example}

\begin{theorem}
\label{thm:pebbleTreeFan}
The pebble tree fan~$\pebbleTreeFan$ is an essential complete simplicial fan in~$\HH_\ell^{b,u}$, whose face lattice is the pebble tree contraction poset~$\pebbleTreePoset$.
\end{theorem}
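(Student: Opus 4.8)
The plan is to verify the hypotheses of \cref{prop:characterizationFan}, applied to the pebble tree complex~$\pebbleTreeComplex$ (which is a closed simplicial pseudomanifold by \cref{prop:pebbleTreeComplex}) together with the family of vectors~$(\ray[J])_{J \subseteq I}$ restricted to the ground set~$\set{\lambda(S)}{S}$. Granting that, the collection of cones~$\set{\R_{\ge0}\rays[\triangle]}{\triangle \in \pebbleTreeComplex}$ is a complete simplicial fan of~$\HH_\ell^{b,u}$; since its face lattice is the face poset of~$\pebbleTreeComplex$, which is~$\pebbleTreePoset$ by \cref{prop:pebbleTreeComplex}, this is exactly the statement. (One must also note that each~$\ray[S]$ lies in~$\HH_\ell^{b,u}$, which is immediate from the definition of~$\ray[J]$, and that the fan is essential because for a maximal pebble tree~$T$ the vectors~$\ray[S]$, $S$ ranging over the non-trivial subtrees, span~$\HH_\ell^{b,u}$; essentiality then follows from completeness once the cones are known to form a fan, or can be checked directly by a dimension count: the number of non-trivial subtrees of a maximal tree equals~$\ell(1+b+u)-u-2 = \dim \HH_\ell^{b,u}$.)

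The first bullet of \cref{prop:characterizationFan} asks for a vector lying in exactly one open maximal cone. Here I would exploit \cref{exm:braidFanSylvesterFan}: the pebble tree fan should refine (or be refined by combinations of) the braid-type structure on the coordinates~$I_0 \supseteq [\ell(b+1)-1]$. Concretely, I would pick a generic vector~$\b{v}$ whose coordinates on each block~$I_i$ are strictly ordered and generic enough that the chain of subsets~$J$ with~$\dotprod{\ray[J]}{\cdot}$-comparisons forced by~$\b{v}$ singles out a unique maximal pebble tree; a clean choice is to take~$\b{v}$ in the interior of the maximal cone of one explicitly described maximal pebble tree (e.g.\ the right comb with all pebbles pushed to the top) and argue genericity. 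Since the cones are simplicial and their spanning rays are among the~$\ray[J]$, membership of a generic~$\b{v}$ in~$\R_{>0}\rays[\triangle]$ is a question of signs of the coordinates of~$\b{v}$ in the basis~$\rays[\triangle]$, and generically at most one~$\triangle$ works; one just has to exhibit one~$\b{v}$ for which at least one~$\triangle$ works, which the explicit maximal tree provides.

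The heart of the proof, and the main obstacle, is the second bullet: for every wall of~$\pebbleTreeComplex$, i.e.\ every corank~$1$ pebble tree, the unique linear dependence among the rays of the two adjacent facets must have the two ``new'' coefficients~$\alpha_v, \alpha_{v'}$ of the same sign. By \cref{prop:pebbleTreeComplex} there are exactly three local shapes of a corank~$1$ node~$p$ (ternary with no pebble; binary with one pebble; unary with two pebbles of distinct colors), pictured in \cref{fig:pseudomanifold}, and in each case the two facets containing the wall differ only in the single ray~$\ray[S]$ attached to the newly created subtree. Using the fundamental relation~$\ray[J] + \ray[K] = \ray[J \cup K] + \ray[J \cap K]$, one computes this local dependence explicitly: in each of the three cases the wall-crossing relation reads~$\ray[S] + \ray[S'] = (\text{sum of rays of ancestor/descendant subtrees common to both facets})$ up to the obvious identifications of the sets~$\lambda(\cdot)$, so that the dependence has the form~$\ray[S] + \ray[S'] - \ray[S''] - \cdots = 0$ with~$\alpha_v = \alpha_{v'} = +1$, hence~$\alpha_v\alpha_{v'} = 1 > 0$. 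The care needed is purely bookkeeping on the sets~$\leaves \boxtimes \balanced$: one checks that in the ternary case~$\lambda(S)$ and~$\lambda(S')$ are two of the three ``intermediate'' subsets $L_{12}\boxtimes B$, $L_{23}\boxtimes B$ obtained from the three children, whose union and intersection are the parent's and (a child's) sets; in the binary-with-pebble case the two possibilities differ by whether the new inner node is~$\pebbleColor$-balanced, which changes~$\lambda(S)$ by toggling the block~$I_{\pebbleColor}$, again giving a two-term relation; and in the unary case the two choices swap which of two colors sits at the new node, producing a relation of the same shape. I would organize this as one lemma per local shape, each a short direct computation with the relation above, and then invoke \cref{prop:characterizationFan} to conclude. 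The potential pitfall is getting the encoding~$\boxtimes$ exactly right so that ``the sets add up'', and making sure the relation is genuinely two-term on the new rays (no cancellation collapsing it), which is where I expect to spend the real effort.
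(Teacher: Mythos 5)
Your overall strategy coincides with the paper's: check the two criteria of \cref{prop:characterizationFan} against the pseudomanifold~$\pebbleTreeComplex$ of \cref{prop:pebbleTreeComplex}, then read off the face lattice. But both of the criteria are handled in a way that leaves a genuine gap. For the first bullet, your argument that ``generically at most one~$\triangle$ works'' is circular: before you know the cones form a fan, distinct maximal cones~$\R_{\ge0}\rays[\triangle]$ may overlap, and a generic vector in the overlap lies in \emph{both} open cones; positivity of coordinates in one simplicial basis says nothing about the other bases. The whole content of this criterion is to rule out a multiple covering, so you must exhibit a specific vector~$\b{v}$ and \emph{prove} that only one maximal pebble tree~$T$ has~$\b{v}$ in the interior of~$\Cone(T)$. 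The paper does this with the explicitly weighted vector $\b{v} = \sum_{i \in [\ell-2]} \ray[{[i]}] + \sum_{i \in [b]} 2^{\ell+i}\,\ray[{[\ell i, \ell(i+1)-1]}] + \sum_{i \in [u]} 2^{\ell+b+i}\,\ray[{[\ell i+1, \ell(i+1)-1]}]$, whose hierarchy of coordinate sizes forces, step by step, the chains of unary pebbled nodes above the leaves and then the left comb shape; nothing of this uniqueness argument appears in your plan.

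For the second bullet, your expectation that each wall gives a \emph{local} two-term relation obtainable by ``short direct computation'' with~$\ray[J] + \ray[K] = \ray[J\cup K] + \ray[J\cap K]$ underestimates the problem, and your three local shapes are too coarse. The paper needs five flip types, because the binary-node-with-one-pebble wall splits according to whether a child is $\bullet$-balanced; in the case where neither child is (the flip moving a pebbled unary node from one side of a binary node to the other), the quantity $\ray[S] - \ray[X] + \ray[S'] - \ray[Y]$ equals a vector of the form~$\ray[{L \otimes \{\bullet\}}]$ which is \emph{not} a ray of any subtree near the wall. To turn it into a legitimate dependence among rays of the two facets one must telescope it through the nearest $\bullet$-pebbled ancestor~$U_0$ and the nearest $\bullet$-pebbled subtrees~$U_1,\dots,U_k$ outside~$R$ (with their children~$V_0,\dots,V_k$), using~$\ray[U_i] - \ray[V_i] = \ray[{\leaves[U_i]\otimes\{\bullet\}}]$ — this is the paper's \cref{lem:linearCombinations} — and to treat separately the case where no such ancestor exists, where one instead uses~$\ray[{\leaves[T]\otimes\{\bullet\}}] = \b{0}$. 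The same expansion is already needed in your ternary and balanced-child cases, where terms like~$\ray[{\leaves[Y]\otimes\balanced[Y]}]$ appear with mixed signs on distant subtrees, so the relation is not ``a sum of rays of ancestor/descendant subtrees common to both facets'' with uniform signs. The saving grace is that only the signs of the coefficients of~$\ray[S]$ and~$\ray[S']$ matter, and those are indeed both~$+1$ in every case — but establishing that requires the non-local lemma and the five-case analysis, which your plan neither contains nor anticipates.
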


The proof of \cref{thm:pebbleTreeFan} relies on the description of the linear dependences among adjacent maximal cones described in \cref{lem:linearDependence}.
To obtain these dependences, we need the following preliminary statement, where we use the operation~$\otimes$ defined in \cref{def:encoding}.

\begin{lemma}
\label{lem:linearCombinations}
For any maximal pebble tree~$S$ and any~$B \subseteq \balanced$, there are in~$S$ some distinct unary subtrees~$U_1, \dots, U_k$ with children~$V_1, \dots, V_k$ respectively such that~${\ray[{\leaves[S] \otimes B}] = \sum_{i \in [k]} \ray[U_i] - \ray[V_i]}$.
\end{lemma}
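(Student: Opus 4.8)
The plan is to build the claimed linear combination one balanced color at a time, starting from a single local identity. First I would record that if~$U$ is a unary subtree of~$S$ carrying a pebble of color~$\gamma$, with unique child~$V$, then $\leaves[U]=\leaves[V]$, and, by the balancedness condition of \cref{def:pebbleTree} together with maximality (the node of~$U$ carries exactly one $\gamma$-pebble, so $\Delta_\gamma(V)=\Delta_\gamma(U)+1$, which forces $\Delta_\gamma(U)=0$ and $\Delta_\gamma(V)=1$), one has $\balanced[U]=\balanced[V]\sqcup\{\gamma\}$. Since $\leaves[V]$ is an interval, $\lambda(V)=\leaves[V]\boxtimes\balanced[V]$ is disjoint from the interval $\leaves[V]\otimes\{\gamma\}$ (notation of \cref{def:encoding}), so $\lambda(U)=\lambda(V)\sqcup(\leaves[V]\otimes\{\gamma\})$; feeding this into the linear dependence $\ray[J]+\ray[K]=\ray[J\cup K]+\ray[J\cap K]$ with $J\cap K=\varnothing$ (and $\ray[\varnothing]=\b 0$) yields the key identity $\ray[U]-\ray[V]=\ray[{\leaves[V]\otimes\{\gamma\}}]$.

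Next I would fix a single color $p\in\balanced[S]$ and study how $p$-balancedness propagates in~$S$. Call a unary subtree a \emph{$p$-node} if its pebble has color~$p$, and \emph{selected} if none of its proper ancestors is a $p$-node. Using the relation between the number of pebbles at a node and the pebble defaults of its subtrees (and maximality, so a binary node carries no pebble and a unary node exactly one), a $p$-balanced node has all its children $p$-balanced unless it is a $p$-node, in which case its unique child is $p$-unbalanced; moreover a $p$-node is itself $p$-balanced. Since the root is $p$-balanced whereas every leaf is $p$-unbalanced, on each root-to-leaf path the first $p$-unbalanced node has a $p$-balanced parent, which is then forced to be a $p$-node; hence every leaf~$x$ has a topmost $p$-node ancestor~$U(x)$, which is automatically selected. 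One then checks that $\{x\in\leaves[S]\mid U(x)=U\}=\leaves[U]$ for each selected $p$-node~$U$, and that distinct selected $p$-nodes are incomparable and hence have disjoint leaf sets; altogether, the intervals $\leaves[U]$ over selected $p$-nodes~$U$ partition $\leaves[S]$. Iterating the linear dependence over the pairwise disjoint intervals $\leaves[U]\otimes\{p\}$ then gives $\sum_U\big(\ray[U]-\ray[V_U]\big)=\sum_U\ray[{\leaves[U]\otimes\{p\}}]=\ray[{\leaves[S]\otimes\{p\}}]$, the sum running over the selected $p$-nodes, with~$V_U$ the child of~$U$.

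Finally, for a general $B\subseteq\balanced[S]$, the set $\leaves[S]\otimes B$ is the disjoint union of the intervals $\leaves[S]\otimes\{p\}$ over $p\in B$, so one more application of the linear dependence gives $\ray[{\leaves[S]\otimes B}]=\sum_{p\in B}\ray[{\leaves[S]\otimes\{p\}}]$; substituting the single-color formulas writes $\ray[{\leaves[S]\otimes B}]$ as $\sum_i\big(\ray[U_i]-\ray[V_i]\big)$, where the~$U_i$ range over the selected $p$-nodes for all $p\in B$. These~$U_i$ are pairwise distinct: two selected $p$-nodes for the same color~$p$ have disjoint nonempty leaf sets, while $p$-nodes for different colors carry pebbles of different colors.

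The step I expect to be the main obstacle is the partition claim in the middle paragraph: correctly tracking how $p$-balancedness propagates down a maximal pebble tree, and checking that the selected $p$-nodes cut $\leaves[S]$ into intervals covering each leaf exactly once. Everything else is bookkeeping with the operators~$\otimes$ and~$\boxtimes$ and repeated use of the linear dependence among the vectors~$\ray[J]$.
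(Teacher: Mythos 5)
Your proof is correct and follows essentially the same route as the paper's: the local identity $\ray[U] - \ray[V] = \ray[{\leaves[U] \otimes \{\gamma\}}]$ at each unary node, summed over the topmost $\gamma$-pebbled nodes (whose leaf sets partition $\leaves[S]$ because~$S$ is $\gamma$-balanced), and then summed over the colors of~$B$ using the modular relation among the vectors~$\ray[J]$. The only difference is that you spell out the propagation/partition argument that the paper merely asserts, which is a refinement of detail rather than a different approach.
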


\begin{proof}
If a subtree~$U$ has a $\pebbleColor$-pebble and a unique child~$V$, then we have~$\ray[{\leaves[U] \otimes \{\pebbleColor\}}] = \ray[U] - \ray[V]$ because~$\leaves[U] = \leaves[V]$ and~$\balanced[U] = \balanced[V] \sqcup \{\pebbleColor\}$.
Hence, for any~$\pebbleColor \in \balanced$, if we denote by~$U_1, \dots, U_k$ the closest descendants of~$S$ with a $\pebbleColor$-pebble and by~$V_1, \dots, V_k$ their respective children, then we have~$\ray[{\leaves[S] \otimes \{\pebbleColor\}}] =  \sum_{i \in [k]} \ray[{\leaves[U_i] \otimes \{\pebbleColor\}}]  = \sum_{i \in [k]} \ray[U_i] - \ray[V_i]$ because~$\leaves[S] = \bigsqcup_{i \in [k]} \leaves[U_i]$.
The result follows since~$\ray[{\leaves[S] \otimes B}] = \sum_{\pebbleColor \in B} \ray[{\leaves[S] \otimes \{\pebbleColor\}}]$.
\end{proof}

\begin{lemma}
\label{lem:linearDependence}
Let~$T$ and~$T'$ be two adjacent maximal pebble trees and let~$S$ and~$S'$ be the subtrees~of~$T$ and~$T'$ such that~$\Lambda(T) \ssm \{\lambda(S)\} = \Lambda(T') \ssm \{\lambda(S')\}$.
Then there is a linear dependence among the rays~$\ray[R]$ associated to the subtrees~$R$ of~$T$ and~$T'$ where the rays~$\ray[S]$ and~$\ray[S']$ both have coefficient~$1$.
\end{lemma}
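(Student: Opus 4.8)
The plan is to analyze the combinatorial structure of the flip relating $T$ and $T'$, following the case distinction of \cref{fig:pseudomanifold,fig:pebbleTreeFlips}, and in each case exhibit the linear dependence explicitly. Let $p$ be the unique node of the common contraction $T/S = T'/S'$ which is neither unary with a single pebble nor binary with no pebble. There are three cases: $p$ has three children and no pebble; $p$ has two children and a single $\pebbleColor$-pebble (so $p$ is $\pebbleColor$-balanced); or $p$ has one child and two pebbles of different colors $\pebbleColor, \pebbleColor'$. In $T$ (resp.~$T'$), this node is resolved into two nodes, the lower one being $S$ (resp.~$S'$), and the two resolutions differ by an ``associativity'' move or a ``pebble-swap'' move.

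First I would treat the three-children case. Write $A$, $B$, $C$ for the three subtrees hanging below $p$ (in order), so that $S$ is the subtree grouping $A$ and $B$ with pebbles that I will call $P$, while $S'$ groups $B$ and $C$. Using the defining identity $\ray[J] + \ray[K] = \ray[{J \cup K}] + \ray[{J \cap K}]$ from before \cref{def:pebbleTreeFan}, I would show that $\ray[S] + \ray[S'] = \ray[{S''}] + \ray[{S \cap S'}]$ where $S''$ is the parent node grouping all of $A, B, C$ with pebbles $P$, and $S \cap S'$ corresponds to the subtree $B$ alone; since both $S''$ and $B$ are subtrees common to $T$ and $T'$, and $\ray[S], \ray[{S'}]$ appear with coefficient $1$, this is the desired dependence (after moving everything to one side the coefficients of $\ray[S]$ and $\ray[{S'}]$ are $1$, those of $\ray[{S''}]$ and $\ray[B]$ are $-1$). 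The key point is that $\lambda(S) \cup \lambda(S') = \lambda(S'')$ and $\lambda(S) \cap \lambda(S') = \lambda(B)$, which is a direct computation with the $\boxtimes$ operation of \cref{def:encoding}, since the interval of leaves and the set of balanced colors behave correctly under union and intersection in this configuration.

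Next I would handle the two-children-with-a-pebble case, which is the genuinely new phenomenon and the step I expect to be the main obstacle. Here $p$ has a $\pebbleColor$-pebble and two children subtrees $X, Y$; in the contraction $T/S = T'/S'$, one of the children, say $Y$, is $\pebbleColor$-balanced, the other $\pebbleColor$-unbalanced. In $T$ the node $p$ is split so that $S$ is the unary $\pebbleColor$-node above $X$ (if $X$ is $\pebbleColor$-balanced) or the binary node grouping $X$ and $Y$ without the $\pebbleColor$-pebble; in $T'$ it is split the other way. The subtle point is that the four subtrees $S$, $S'$, the common parent $S''$ (the $\pebbleColor$-balanced node over everything), and some smaller common subtree are no longer related by a single $\ray[J] + \ray[K] = \ray[{J\cup K}] + \ray[{J \cap K}]$ identity, because an extra $\pebbleColor$-pebble is being moved around; this is exactly where \cref{lem:linearCombinations} enters, supplying the vector $\ray[{\leaves[S] \otimes \{\pebbleColor\}}]$ as a telescoping sum $\sum \ray[{U_i}] - \ray[{V_i}]$ over unary $\pebbleColor$-nodes of the common contraction, all of which are subtrees of both $T$ and $T'$. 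I would combine this with the union/intersection identity to produce the dependence, again tracking that $\ray[S]$ and $\ray[{S'}]$ come out with equal coefficients, which I can normalize to $1$. The unary-node case (two pebbles of different colors at $p$) is similar but simpler: $S$ and $S'$ differ only by which of $\pebbleColor, \pebbleColor'$ sits above, so $\ray[S] - \ray[{S'}]$ equals $\ray[{\leaves \otimes \{\pebbleColor\}}] - \ray[{\leaves \otimes \{\pebbleColor'\}}]$, and \cref{lem:linearCombinations} rewrites each term over common subtrees, while the common parent and child close up the dependence. Finally I would remark that in all three cases the linear dependence on $\rays[{\Lambda(T) \cup \Lambda(T')}]$ is unique up to scaling because $\Lambda(T) \cap \Lambda(T')$ is a ridge (codimension one) of a simplicial pseudomanifold, so the span of the rays indexed by $\Lambda(T) \cup \Lambda(T')$ has a one-dimensional space of relations; this uniqueness is what lets us speak of \emph{the} dependence in \cref{lem:linearDependence} and feeds directly into \cref{prop:characterizationFan,prop:characterizationPolytopalFan}.
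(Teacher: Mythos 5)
Your three-way split according to the contracted node matches the paper's analysis (its Cases 2--4 refine your second case), and identifying \cref{lem:linearCombinations} as the tool for moving pebbles around is right. But there is a genuine gap in your first case: the identities $\lambda(S) \cup \lambda(S') = \lambda(S'')$ and $\lambda(S) \cap \lambda(S') = \lambda(B)$ are false in general. The leaf intervals do behave well, but the balanced colors do not: one has $\balanced[S] \cap \balanced[{S'}] = \balanced[{S''}]$, which differs from $\balanced[B]$, and $\balanced[S] \cup \balanced[{S'}]$ can strictly contain $\balanced[{S''}]$ --- any color balanced in exactly two of the three children $A,B,C$ creates such a discrepancy. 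Concretely, take $T, T' \in \pebbleTrees[3][0,1]$ to be the two binary resolutions of the corolla whose children are a $\bullet$-leaf, a $\bullet$-leaf, and a leaf, in this order (adjacent vertices of \cref{fig:flipGraph301}). Then $\lambda(S) = \{1,3,4\}$, $\lambda(S') = \{2\}$, $\lambda(S'') = \{1,2\}$, $\lambda(B) = \{4\}$, and the actual dependence is $\ray[S] + \ray[{S'}] = \ray[X] + \ray[B]$, where $X$ is the first $\bullet$-leaf; since $\ray[{S''}] = \b{0} \ne \ray[X]$, your proposed relation $\ray[S] + \ray[{S'}] - \ray[{S''}] - \ray[B]$ is not zero. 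So the three-children case also requires correction terms of the form $\ray[{\leaves[S] \otimes (\balanced[S] \ssm \balanced[{S''}])}]$, expanded via \cref{lem:linearCombinations} into rays of unary subtrees, exactly as the paper does; this is not a dispensable refinement, since that expanded form is what feeds the wall-crossing estimates in \cref{thm:pebbleTreePolytope1}. (Ironically, the clean union/intersection identity you want is what holds in your third, unary case, where $\lambda(S) \cup \lambda(S') = \lambda(R)$ and $\lambda(S) \cap \lambda(S') = \lambda(X)$ directly.)

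In your second case the sketch stops where the real difficulty starts. When neither child of the contracted node is $\pebbleColor$-balanced (the paper's Case 4), the local computation gives $(\ray[S] - \ray[X]) + (\ray[{S'}] - \ray[Y]) = \ray[{(\leaves[X] \sqcup \leaves[Y]) \otimes \{\pebbleColor\}}]$, and in general no combination of rays of subtrees inside the flip region represents this vector: the dependence must leave the flip region, using the closest ancestor $U_0$ carrying a $\pebbleColor$-pebble together with its child $V_0$ and the other closest $\pebbleColor$-pebbled descendants of $U_0$, or the global relation $\ray[{\leaves[T] \otimes \{\pebbleColor\}}] = \b{0}$ when no such ancestor exists. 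Saying you would combine the union/intersection identity with \cref{lem:linearCombinations} while ``tracking that the coefficients come out equal'' does not produce these outside terms, which are the whole content of that case. Finally, your closing uniqueness remark is both unnecessary and circular at this stage: linear independence of the rays indexed by a ridge is not yet known (it is part of what \cref{prop:characterizationFan} is being used to establish), and the lemma only asks you to exhibit one dependence with coefficient $1$ on $\ray[S]$ and $\ray[{S'}]$.
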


\begin{proof}
We analyse the five possible types of flips described in \cref{fig:pebbleTreeFlips}.
In all cases, we denote by~$R$ the parent of~$S$ and~$S'$.

\bigskip
\parpic(3cm,1cm)(-5pt, 5pt)[r][b]{
	\mbox{\pebbleTree{[ [ {}, label={[label distance=-2pt]135:{$R$}} [ {}, label={[label distance=-4pt]135:{$S$}} [ $X$ ] [ $Y$ ] ] [ $Z$ ] ] ]}[9pt] \!\!\raisebox{-1cm}{$\longleftrightarrow$}\!\! \pebbleTree{[ [ {}, label={[label distance=-2pt]45:{$R$}} [ $X$ ] [ {}, label={[label distance=-4pt]45:{$S'$}} [ $Y$ ] [ $Z$ ] ] ] ]}[9pt]}
}{
\noindent
\textbf{Case 1.}
We have \\[.1cm]
$\begin{array}{lll}
{\leaves[S] = \leaves[X] \sqcup \leaves[Y]} & {\leaves[S'] = \leaves[Y] \sqcup \leaves[Z]} & {\leaves[R] = \leaves[X] \sqcup \leaves[Y] \sqcup \leaves[Z]} \\
{\balanced[S] = \balanced[X] \cap \balanced[Y]} & {\balanced[S'] = \balanced[Y] \cap \balanced[Z]} & {\balanced[R] = \balanced[X] \cap \balanced[Y] \cap \balanced[Z]}
\end{array}$\\[.1cm]
which yields
}

\[\boxed{
\ray[S] + \ray[S'] = \ray[R] + \ray[Y] + \ray[{\leaves[S] \otimes (\balanced[S] \ssm \balanced[R])}] + \ray[{\leaves[S'] \otimes (\balanced[S'] \ssm \balanced[R])}] - \ray[{\leaves[Y] \otimes \balanced[Y]}]
}\]

\medskip\noindent
Since~$\ray[{\leaves[S] \otimes (\balanced[S] \ssm \balanced[R])}]$ (resp.~$\ray[{\leaves[S'] \otimes (\balanced[S'] \ssm \balanced[R])}]$, resp.~$\ray[{\leaves[Y] \otimes \balanced[Y]}]$) is a linear combination of the rays~$\ray[P]$ for some subtrees~$P$ of~$S$ distinct from~$S$ (resp.~of~$S'$ distinct from~$S'$, resp.~of~$Y$) by \cref{lem:linearCombinations}, this is indeed a linear dependence among the rays~$\ray[Q]$ associated to the subtrees~$Q$ of~$T$ and~$T'$ where the rays~$\ray[S]$ and~$\ray[S']$ both have coefficient~$1$.

\bigskip
\parpic(3cm,1cm)(-5pt, 5pt)[r][b]{
	\mbox{\pebbleTree{[ [ {}, label={[label distance=-2pt]135:{$R$}} [ {$\bullet$}, label={[label distance=-4pt]180:{$S$}} [ $X$ ] ] [ $\mathrlap{\phantom{Y}}\smash{Y_\bullet}$ ] ] ]} \!\!\raisebox{-1cm}{$\longleftrightarrow$}\!\! \pebbleTree{[ [ {$\bullet$}, label={[label distance=-4pt]180:{$R$}} [ {}, label={[label distance=-3pt]45:{$S'$}} [ $X$ ] [ $\mathrlap{\phantom{Y}}\smash{Y_\bullet}$ ] ] ] ]}[7.5pt]}
}{
\noindent
\textbf{Case 2.}
We have \\[.1cm]
$\begin{array}{lll}
{\leaves[S] = \leaves[X]} & {\leaves[S'] = \leaves[X] \sqcup \leaves[Y]} & {\leaves[R] = \leaves[X] \sqcup \leaves[Y]} \\
{\balanced[S] = \balanced[X] \sqcup \{\bullet\}} & {\balanced[S'] = \balanced[X] \cap \balanced[Y]} & {\balanced[R] = ( \balanced[X] \cap \balanced[Y] ) \sqcup \{\bullet\}}
\end{array}$\\[.1cm]
which yields
\(\qquad\boxed{
\ray[S] + \ray[S'] = \ray[R] + \ray[X] - \ray[{\leaves[Y] \otimes \{\bullet\}}]
}\)
}

\medskip\noindent
Again, we can develop $\ray[{\leaves[Y] \otimes \{\bullet\}}]$ using \cref{lem:linearCombinations}, so that we indeed obtained a linear dependence among the rays~$\ray[Q]$ for the subtrees~$Q$ of~$T$ and~$T'$ where the rays~$\ray[S]$ and~$\ray[S']$ both have coefficient~$1$.

\bigskip
\parpic(3cm,1cm)(-5pt, -12pt)[r][b]{
	\mbox{\pebbleTree{[ [ {}, label={[label distance=-2pt]135:{$R$}} [ $\mathrlap{\phantom{X}}\smash{X_\bullet}$ ] [ {$\bullet$}, label={[label distance=-4pt]0:{$S$}} [ $Y$ ] ] ] ]} \!\!\raisebox{-1cm}{$\longleftrightarrow$}\!\! \pebbleTree{[ [ {$\bullet$}, label={[label distance=-4pt]180:{$R$}} [ {}, label={[label distance=-3pt]45:{$S'$}} [ $\mathrlap{\phantom{X}}\smash{X_\bullet}$ ] [ $Y$ ] ] ] ]}[7.5pt]}
}{
\noindent
\textbf{Case 3.} Symmetric to Case~2.
}

\bigskip
\parpic(3cm,1cm)(-5pt, 12pt)[r][b]{
	\mbox{\pebbleTree{[ [ {}, label={[label distance=-2pt]135:{$R$}} [ {$\bullet$}, label={[label distance=-4pt]180:{$S$}} [ $X$ ] ] [ $Y$ ] ] ]} \!\!\raisebox{-1cm}{$\longleftrightarrow$}\!\! \pebbleTree{[ [ {}, label={[label distance=-2pt]135:{$R$}} [ $X$ ] [ {$\bullet$}, label={[label distance=-4pt]0:{$S'$}} [ $Y$ ] ] ] ]}}
}{
\noindent
\textbf{Case 4.}
Assume first that $R$ has an ancestor with a $\bullet$-pebble.
Then we additionally denote
\begin{itemize}
\item by~$U_0$ the closest ancestor of~$R$ which has a~$\bullet$-pebble,
\item by $U_1, \dots, U_k$ the closest descendants of~$U_0$ but not descendants of~$R$ \\ which have a~$\bullet$-pebble,
\item by~$V_0, V_1, \dots, V_k$ the (unique) children of~$U_0, U_1, \dots, U_k$ respectively.
\end{itemize}
}

\noindent
We have
\[
\begin{array}{lll}
{\leaves[S] = \leaves[X]} & {\leaves[S'] = \leaves[Y]} & {\leaves[U_i] = \leaves[V_i]} \\
{\balanced[S] = \balanced[X] \sqcup \{\bullet\}} & {\balanced[S'] = \balanced[Y] \sqcup \{\bullet\}} &  {\balanced[U_i] = \balanced[V_i] \sqcup \{\bullet\}}
\end{array}
\]
and moreover $\leaves[U_0] = \leaves[S] \sqcup \leaves[S'] \sqcup \leaves[U_1] \sqcup \dots \sqcup \leaves[U_k]$.
Using \cref{lem:linearCombinations}, we get
\[
\ray[U_0] - \ray[V_0] = \ray[{\leaves[U_0] \otimes \{\bullet\}}] = \ray[S] - \ray[X] + \ray[S'] - \ray[Y] + \ray[U_1] - \ray[V_1] + \dots + \ray[U_k] - \ray[V_k]
\]
or, written differently
\[
\boxed{
\ray[S] + \ray[S'] = \ray[X] + \ray[Y] + \ray[U_0] - \ray[V_0] - \ray[U_1] + \ray[V_1] + \dots - \ray[U_k] + \ray[V_k]
}\]

\medskip
Now if~$R$ has no ancestor with a $\bullet$-pebble, then using that~$\ray[{\leaves[T] \otimes \{\bullet\}}] = \b{0}$, we obtain similarly
\[
\boxed{
\ray[S] + \ray[S'] = \ray[X] + \ray[Y] - \ray[U_1] + \ray[V_1] + \dots - \ray[U_k] + \ray[V_k]
}\]
where
\begin{itemize}
\item $U_1, \dots, U_k$ are the closest descendants of the root of~$T$ but not descendants of~$R$ which have a $\bullet$-pebble,
\item $V_1, \dots, V_k$ are the (unique) children of~$U_1, \dots, U_k$ respectively.
\end{itemize}

\bigskip
\parpic(3cm,1cm)(-5pt, 5pt)[r][b]{
	\mbox{\pebbleTree{[ [ {$\circ$}, label={[label distance=-4pt]180:{$R$}} [ {$\bullet$}, label={[label distance=-4pt]180:{$S$}} [ $X$ ] ] ] ]}[5.5pt] \!\!\raisebox{-1cm}{$\longleftrightarrow$}\!\! \pebbleTree{[ [ {$\bullet$}, label={[label distance=-4pt]0:{$R$}} [ {$\circ$}, label={[label distance=-4pt]0:{$S'$}} [ $X$ ] ] ] ]}[5.5pt]}
}{
\noindent
\textbf{Case 5.}
We have\\[.1cm]
$\begin{array}{lll}
{\leaves[S] = \leaves[X]} & {\leaves[S'] = \leaves[X]} & {\leaves[R] = \leaves[X]} \\
{\balanced[S] = \balanced[X] \sqcup \{\bullet\}} & {\balanced[S'] = \balanced[X] \sqcup \{\circ\}} & {\balanced[R] = \balanced[X] \sqcup \{\circ, \bullet\}}
\end{array}$\\[.1cm]
which yields
\(\qquad\boxed{
\ray[S] + \ray[S'] = \ray[R] + \ray[X]
}\)
}

\end{proof}

\begin{proof}[Proof of \cref{thm:pebbleTreeFan}]
Note that $\pebbleTreeFan$ is included in~$\HH_\ell^{b,u}$ since all rays~$\ray$ are.
To prove that it is a complete simplicial fan, we just check the two criteria of \cref{prop:characterizationFan}.
The second criterion is guaranteed by the description of the linear dependences in \cref{lem:linearDependence}.
For the first criterion, consider the vector \[\b{v} = \sum_{i \in [\ell-2]} \ray[{[i]}] + \sum_{i \in [b]} 2^{\ell + i} \ray[{[\ell i, \ell(i+1)-1]}] + \sum_{i \in [u]} 2^{\ell + b+i} \ray[{[\ell i + 1, \ell(i+1)-1]}],\] and a pebble tree~$T$ such that~$\b{v}$ is contained in the interior of~$\Cone(T)$.
As the last~$\ell-1$ coordinates of~$\b{v}$ are strictly larger than all other coordinates, each of the last~$\ell-1$ leaves of~$T$ is preceded by a unary node with pebble colored by~$b+u$. Repeating the argument, we obtain that the first leaf of~$T$ is preceded by a chain of unary nodes with pebbles colored~$1, \dots, b$ while each of the last~$\ell-1$ leaves if~$T$ is preceded by a chain of unary nodes with pebbles colored~$1, \dots, b+u$. Finally, we obtain that the rest of the tree~$T$ is the left comb since it is the only Schr\"oder tree whose cone in the sylvester fan contains the vector~$\sum_{i \in [\ell-2]} \ray[{[i]}]$.
Finally, $\pebbleTreeFan$ is essential as the dimension of its cones matches the dimension of~$\HH_\ell^{b,u}$.
\end{proof}

\begin{remark}
A few observations on the pebble tree fan:
\begin{itemize}
\item The simple descriptions of \cref{exm:braidFanSylvesterFan} for the braid fan and for the sylvester fan unfortunately fail for arbitrary~$b,u \ge 0$. Indeed, there is a natural way to label the nodes of a maximal pebble tree: label the binary nodes in inorder by~${[\ell-1]}$ and the unary nodes by the only leaf first covered by this pebble. This labeling yields a cone $\Cone(T)$ for each maximal pebble tree~$T$, defined by~$x_i \le x_j$ whenever there is a directed path from~$i$ to~$j$ in the tree~$T$ oriented towards its root. However, the cones $\Cone(T)$ for all maximal pebble trees~$T$ do not define a complete simplicial fan (check out the case~$\ell = 2$, $b = 1$ and~$u = 0$). In fact, our pebble tree fan~$\pebbleTreeFan$ is not refined by the braid fan in general
\item Our definition of the pebble tree fan~$\pebbleTreeFan$ respects some symmetries of the pebble tree complex~$\pebbleTreeComplex$ but not all. See \cref{prop:pebbleTreePolytopeOperations} for a precise statement directly on polytopes.
\item \cref{lem:linearDependence} actually proves that the pebble tree fan~$\pebbleTreeFan$ is smooth, meaning that the principal vectors spanning the rays of any maximal cone of~$\pebbleTreeFan$ form an integral basis of the space (in other words, the corresponding toric variety is smooth).
\end{itemize}
\end{remark}

%%%%%%%%%%

\subsection{Pebble tree polytope}
\label{subsec:pebbleTreePolytope}

Our next step is to construct a polytope whose normal fan is the pebble tree fan, using the criterion of \cref{prop:characterizationPolytopalFan}.

\begin{definition}
\label{def:submodularFunction}
A \defn{submodular function} on~$n$ is a map~$f$ from the subsets of~$[n]$ to~$\R_{\ge0}$ such that~$f_\varnothing = 0$ and
\[
f_{A \cup B} + f_{A \cap B} \le f_A + f_B
\]
for any subsets~$A$ and~$B$ of~$[n]$.
We then define
\[
\Delta f \eqdef \min \big( f_A + f_B - f_{A \cup B} - f_{A \cap B} \big)
\]
where the minimum ranges over all subsets~$A$ and~$B$ of~$[n]$ such that~$A \not\subseteq B$ and~$A \not\supseteq B$.
Note~that
\begin{itemize}
\item $\sum_{i \in [k]} f_{A_i} - f_A \ge (k-1) \cdot \Delta f$ for any~$A = \bigcup_{i \in [k]} A_i$ where~$A_1, \dots, A_k$ are pairwise disjoint, % subsets of~$[n]$.
\item $\Delta \lambda f = \lambda \Delta f$ for any scalar factor~$\lambda$.
\end{itemize}
We say that~$f$ is \defn{strictly submodular} when~$\Delta f > 0$.
\end{definition}

\begin{theorem}
\label{thm:pebbleTreePolytope1}
Pick three strictly submodular functions~$f$ on~$\ell$, $g$ on~$\ell$, and $h$ on~$b+u$ such that
\[
\Delta f > 4 (\ell b + \ell u - u) \cdot (\max g + \max h)
\qquad\text{and}\qquad
\Delta g > (\ell b + \ell u - u + 1) \cdot \max h.
\]
Then the pebble tree fan~$\pebbleTreeFan$ is the normal fan of the \defn{pebble tree polytope}~$\pebbleTreePolytope(f,g,h)$, the $(\ell + \ell b + \ell u - u - 2)$-dimensional polytope defined in the subspace~$\HH_\ell^{b,u}$ by the inequalities
\[
\dotprod{\ray[{[s,t] \boxtimes B}]}{\b{x}} \le f_{[s,t]} + g_{[s,t]} \cdot |B| + h_B
\]
for all~$1 \le s \le t \le \ell$ and all~$B \subseteq [b+u]$.
\end{theorem}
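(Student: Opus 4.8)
The plan is to apply~\cref{prop:characterizationPolytopalFan} to the pebble tree fan~$\pebbleTreeFan$, which by~\cref{thm:pebbleTreeFan} is an essential complete simplicial fan in~$\HH_\ell^{b,u}$ with face lattice~$\pebbleTreePoset$. Take the height vector~$\b{H}$ defined by $H_S \eqdef f_{\leaves} + g_{\leaves} \cdot |\balanced| + h_{\balanced}$ for every subtree~$S$ of a pebble tree of~$\pebbleTrees$; as the leaf set~$\leaves$ of a subtree is an interval of~$[\ell]$ and~$\ray[S] = \ray[{\leaves \boxtimes \balanced}]$, this is exactly the right-hand side of the proposed facet inequality attached to the ray~$\ray[S]$. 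By~\cref{prop:characterizationPolytopalFan}, it then suffices to check, for every flip between two adjacent maximal pebble trees~$T$ and~$T'$, that the corresponding wall crossing inequality~$\sum_Q \alpha_Q H_Q > 0$ holds, where~$\sum_Q \alpha_Q \ray[Q] = 0$ is the unique linear dependence among the rays of the maximal cones~$\Cone(T)$ and~$\Cone(T')$ normalized so that the two rays~$\ray[S], \ray[S']$ not common to both cones have coefficient~$1$ (which is precisely the normalization~$\alpha_v + \alpha_{v'} = 2$). For this dependence I would use the explicit one produced in the proof of~\cref{lem:linearDependence}, after expanding every auxiliary vector~$\ray[{L \otimes B}]$ via~\cref{lem:linearCombinations} as a signed sum of differences~$\ray[U_i] - \ray[V_i]$ over unary subtrees~$U_i$ with child~$V_i$; in each of the five cases of~\cref{lem:linearDependence} these~$U_i, V_i$ sit strictly inside the subtrees left untouched by the flip, so the expanded dependence is genuinely supported on the rays of~$\Cone(T) \cup \Cone(T')$, hence is the required unique one.

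I would then split the height as~$H_S = F_S + G_S$ with~$F_S \eqdef f_{\leaves}$ and~$G_S \eqdef g_{\leaves} \cdot |\balanced| + h_{\balanced}$, so that the wall crossing sum splits as~$\Phi = \Phi_F + \Phi_G$. The term~$\Phi_F$ is the decisive one and is easy to handle: every expanded difference~$\ray[U_i] - \ray[V_i]$ contributes~$0$ to~$\Phi_F$ since~$\leaves[U_i] = \leaves[V_i]$, and inspecting the five dependences of~\cref{lem:linearDependence} shows that in Cases~2--5 the leaf sets of the remaining rays cancel, so~$\Phi_F = 0$, whereas in Case~1 one obtains $\Phi_F = f_{\leaves[S]} + f_{\leaves[S']} - f_{\leaves[R]} - f_{\leaves[Y]}$; since~$\leaves[S] = \leaves[X] \sqcup \leaves[Y]$ and~$\leaves[S'] = \leaves[Y] \sqcup \leaves[Z]$ are incomparable intervals with union~$\leaves[R]$ and intersection~$\leaves[Y]$, strict submodularity of~$f$ gives~$\Phi_F \ge \Delta f$.

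To control~$\Phi_G$ I would use two ingredients. First, each expanded difference~$\ray[U_i] - \ray[V_i]$ contributes~$g_{\leaves[U_i]} + h_{\balanced[U_i]} - h_{\balanced[V_i]}$ to~$\Phi_G$ (using~$\leaves[U_i] = \leaves[V_i]$ and~$|\balanced[U_i]| = |\balanced[V_i]| + 1$), a quantity of absolute value at most~$\max g + \max h$; and a pebble tree of~$\pebbleTrees$ carries exactly~$\ell b + \ell u - u$ pebbles in total (sum the nodewise pebble count over all nodes and telescope), so the number of expanded differences in a dependence is at most a small multiple of~$\ell b + \ell u - u$. Second, in each dependence the leaf intervals~$\leaves[U_i]$, together with a bounded number of the leaf intervals of the main rays, partition one of the leaf intervals of the main rays, so the~$g_{\leaves}$ summands recombine, by submodularity of~$g$ on pairwise disjoint sets, into a nonnegative multiple of~$\Delta g$. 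Concretely: in Case~5 one gets $\Phi_G = h_{\balanced[X] \sqcup \{\bullet\}} + h_{\balanced[X] \sqcup \{\circ\}} - h_{\balanced[X] \sqcup \{\circ, \bullet\}} - h_{\balanced[X]} \ge \Delta h > 0$; in Cases~2--4 one gets $\Phi_G \ge c \, \Delta g - c' \max h$ with~$c \ge 1$ and~$c' \le \ell b + \ell u - u + 1$, which is positive by the hypothesis~$\Delta g > (\ell b + \ell u - u + 1) \max h$; and in Case~1 one bounds~$|\Phi_G| \le 4 (\ell b + \ell u - u)(\max g + \max h)$, so together with~$\Phi_F \ge \Delta f$ and the hypothesis~$\Delta f > 4 (\ell b + \ell u - u)(\max g + \max h)$ we get~$\Phi = \Phi_F + \Phi_G > 0$. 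This settles the wall crossing inequalities, so~\cref{prop:characterizationPolytopalFan} shows that~$\pebbleTreeFan$ is the normal fan of the polytope cut out by the subtree inequalities, which is full-dimensional in~$\HH_\ell^{b,u}$; since~$\dim \HH_\ell^{b,u} = \ell(b+u+1) - 1 - (u+1) = \ell + \ell b + \ell u - u - 2$, it only remains to check (at the vertices) that the inequalities of~$\pebbleTreePolytope(f,g,h)$ whose index~$[s,t] \boxtimes B$ is not of the form~$\leaves[S] \boxtimes \balanced[S]$ of a subtree, namely those with~$[s,t] = [\ell]$ and~$B \ne [b]$, are redundant.

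The step I expect to be the main obstacle is the estimate of~$\Phi_G$ in Case~1. There~$\Phi_F$ is only bounded below by~$\Delta f$, so everything hinges on keeping~$|\Phi_G|$ strictly below~$\Delta f$; but~$\Phi_G$ mixes the~$|\balanced|$-weighted~$g$-terms of the main rays~$\ray[S], \ray[S'], \ray[R], \ray[Y]$ (whose naive size is of order~$(b+u) \max g$) with the many expanded~$h$-terms, so one must carefully pair the differences~$\ray[U_i] - \ray[V_i]$ against the submodularity discrepancies of~$g$ on the main rays and use the exact pebble count~$\ell b + \ell u - u$ to reach the clean bound. The thresholds in the hypotheses are essentially tight, so the bookkeeping cannot afford to be lossy, and a handful of degenerate configurations, where a flip would force~$\ell b + \ell u - u$ to be too small, must separately be checked to be vacuous.
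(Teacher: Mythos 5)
Correct, and essentially identical to the paper's proof: the paper also takes the height $\varphi_S = f_{\leaves} + g_{\leaves} \cdot |\balanced| + h_{\balanced}$, applies \cref{prop:characterizationPolytopalFan} to the five dependences of \cref{lem:linearDependence} expanded via \cref{lem:linearCombinations}, and splits the estimates exactly as you do ($\Delta f$ dominating all $g$- and $h$-contributions in Case~1, $\Delta g$ dominating at most $\ell b + \ell u - u + 1$ terms bounded by $\max h$ in Cases~2--4, and strict submodularity of $h$ in Case~5). One small correction to your closing remark: the indices $[\ell] \boxtimes B$ with $B \subsetneq [b]$ do arise as $\lambda(S)$ for a subtree $S$ hanging below a unary chain at the root (so those inequalities are facet-defining, not redundant); the only inequalities not indexed by rays are those with $[s,t] = [\ell]$ and $B \not\subseteq [b]$, whose redundancy the paper's proof likewise leaves implicit.
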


\begin{proof}
To shorten notations in this proof, we define for a maximal pebble subtree~$S$
\[
f_S \eqdef f_{\leaves[S]},
\qquad
g_S \eqdef g_{\leaves[S]},
\qquad
h_S \eqdef h_{\balanced[S]},
\qquad\text{and}\qquad
\varphi_S \eqdef f_{\leaves[S]} + g_{\leaves[S]} \cdot |\balanced[S]| + h_{\balanced[S]}.
\]
Observe that if~$V$ is the unique child of~$U$, then~$\varphi_U - \varphi_V = g_U + h_U - h_V$ because~$\leaves[U] = \leaves[V]$ and~$|\balanced[U]| = |\balanced[V]| + 1$.
We just need to prove that the function~$\varphi$ satisfies the wall-crossing inequalities of \cref{prop:characterizationPolytopalFan} for each of the linear dependences boxed in the proof of \cref{lem:linearDependence}.

\medskip\noindent{\bf Case 1.}
By \cref{lem:linearCombinations,lem:linearDependence}, we have
\[
\ray[S] + \ray[S'] - \ray[R] - \ray[Y] - \sum_{i \in [k]} ( \ray[U_i] - \ray[V_i] ) - \sum_{i \in [k']} ( \ray[U'_i] - \ray[V'_i] ) + \sum_{i \in [k'']} ( \ray[U''_i] - \ray[V''_i] ) = \b{0}
\]
for distinct unary subtrees~$U_1, \dots, U_k$ of~$S$ (resp.~$U'_1, \dots, U'_{k'}$ of~$S'$, resp.~$U''_1, \dots, U''_{k''}$ of~$Y$) with respective children~$V_1, \dots, V_k$ (resp.~$V'_1, \dots, V'_{k'}$, resp.~$V''_1, \dots, V''_{k''}$).
Since~$\leaves[R] = \leaves[S] \cup \leaves[S']$ and~$\leaves[Y] = \leaves[S] \cap \leaves[S']$, we have
\begin{align*}
f_S + f_{S'} & - f_R - f_Y \ge \Delta f > 4 (\ell b + \ell u - u) (\max g + \max h) \\
& \ge ( g_R + h_R ) + ( g_Y + h_Y ) + \sum_{i \in [k]} ( g_{U_i} + h_{U_i} ) + \sum_{i \in [k']} ( g_{U'_i} + h_{U'_i} ) + \sum_{i \in [k'']} ( g_{V''_i} + h_{V''_i} ),
\end{align*}
where the last inequality holds since~$U_i \ne U_j$ (resp.~$U'_i \ne U'_j$, resp.~$V''_i \ne V''_j$) for~$i \ne j$, and the pebble tree~$T$ has~$\ell b + \ell u - u$ unary subtrees.
Since~$f, g, h$ take non-negative values and~$\varphi_U - \varphi_V = g_U + h_U - h_V$ when~$V$ is the unique child of~$U$, we obtain that~$\varphi$ satisfies the wall-crossing inequality
\[
\varphi_S + \varphi_{S'} - \varphi_R - \varphi_Y - \sum_{i \in [k]} ( \varphi_{U_i} - \varphi_{V_i} ) - \sum_{i \in [k']} ( \varphi_{U'_i} - \varphi_{V'_i} ) + \sum_{i \in [k'']} ( \varphi_{U''_i} - \varphi_{V''_i} ) > 0.
\]

\medskip\noindent{\bf Case 2.}
By \cref{lem:linearDependence}, we have
\[
( \ray[R] - \ray[S'] ) - ( \ray[S] - \ray[X] ) - \sum_{i \in [k]} ( \ray[U_i] - \ray[V_i] ) = \b{0}
\]
for distinct unary subtrees~$U_1, \dots, U_k$ of~$Y$ with children~$V_1, \dots, V_k$, such that~${\bigsqcup_{i \in [k]} \leaves[U_i] = \leaves[Y]}$.
Since~$\leaves[R] = \leaves[S] \sqcup \leaves[Y]$, we obtain that
\begin{align*}
g_R - g_S - \sum_{i \in [k]} g_{U_i} \ge (k-1) \cdot \Delta g \ge \Delta g > (\ell b + \ell u - u + 1) \cdot \max h \ge h_{S'} + h_S + \sum_{i \in [k]} h_{U_i},
\end{align*}
where the last inequality holds since~$S \ne U_i \ne U_j$ for~$i \ne j$, and the subtree~$R$ has at most~$\ell b + \ell u - u$ unary subtrees.
Since~$f, g, h$ take non-negative values and~${\varphi_U - \varphi_V = g_U + h_U - h_V}$ when~$V$ is the unique child of~$U$, we obtain that~$\varphi$ satisfies the wall-crossing inequality
\[
( \varphi_R - \varphi_{S'} ) - ( \varphi_S - \varphi_X ) - \sum_{i \in [k]} ( \varphi_{U_i} - \varphi_{V_i} ) > 0.
\]

\medskip\noindent{\bf Case 3.}
Symmetric to Case~2.

\bigskip\noindent{\bf Case 4.}
Assume first that~$R$ has an ancestor with a $\bullet$-pebble.
Then by \cref{lem:linearDependence}, we have
\[
( \ray[U_0] - \ray[V_0] ) - ( \ray[S] - \ray[X] ) - ( \ray[S'] - \ray[Y] ) - \sum_{i \in [k]} ( \ray[U_i] - \ray[V_i] ) = \b{0}.
\]
Since~$\leaves[U_0] = \leaves[S] \sqcup \leaves[S'] \sqcup \bigsqcup_{i \in [k]} \leaves[U_i]$, we obtain that
\[
g_{U_0} - g_S - g_{S'} - \sum_{i \in [k]} g_{U_i} \ge (k+1) \cdot \Delta g \ge \Delta g > (\ell b + \ell u - u + 1) \cdot \max h > h_{V_0} + h_S + h_{S'} + \sum_{i \in [k]} h_{U_i},
\]
where the last inequality holds since~$S \ne U_i \ne U_j \ne S'$ for~$i \ne j$, and the subtree~$U_0$ has at most~$\ell b + \ell u - u$ unary subtrees.
Since~$f, g, h$ take non-negative values and~${\varphi_U - \varphi_V = g_U + h_U - h_V}$ when~$V$ is the unique child of~$U$, we obtain that~$\varphi$ satisfies the wall-crossing inequality
\[
(  \varphi_{U_0} -  \varphi_{V_0} ) - (  \varphi_S -  \varphi_X ) - (  \varphi_{S'} -  \varphi_Y ) - \sum_{i \in [k]} (  \varphi_{U_i} -  \varphi_{V_i} ) > 0.
\]

Assume now that~$R$ has no ancestor with a $\bullet$-pebble.
Then we have
\[
( \ray[S] - \ray[X] ) + ( \ray[S'] - \ray[Y] ) + \sum_{i \in [k]} ( \ray[U_i] - \ray[V_i] ) = \b{0}.
\]
The wall-crossing inequality is thus even easier to satisfy since~$\varphi_{U_0} - \varphi_{V_0}$ does not appear.

\bigskip\noindent{\bf Case 5.}
By \cref{lem:linearDependence}, we have
\[
\ray[S] + \ray[S'] - \ray[R] - \ray[X] = \b{0}
\]
Since~$\leaves[R] = \leaves[S] = \leaves[S'] = \leaves[X]$ and~$\balanced[R] = \balanced[S] \sqcup \{\circ\} = \balanced[S'] \sqcup \{\bullet\} = \balanced[X] \sqcup \{\circ, \bullet\}$, we have
\[
\varphi_S + \varphi_{S'} - \varphi_R - \varphi_X = h_S + h_{S'} - h_R - h_R > 0.
\qedhere
\]
\end{proof}

\begin{remark}
Note that the conditions of \cref{thm:pebbleTreePolytope1} are just sufficient conditions to ensure the wall-crossing inequalities.
To find functions satisfying these conditions, pick three arbitrary strictly submodular functions~$f, g, h$ and rescale first~$g$ by a factor~$4 (\ell b + \ell u - u + 1) \cdot \max h / \Delta g$, and then~$f$ by a factor~$(\ell b + \ell u - u) \cdot (\max g + \max h) / \Delta f$.
We just write~$\pebbleTreePolytope$ if we want to consider~$\pebbleTreePolytope(f,g,h)$ for arbitrary~$f,g,h$ satisfying the conditions of \cref{thm:pebbleTreePolytope1}.
\end{remark}

\pagebreak

\begin{example}
In the extreme situations of \cref{exm:pebbleTrees}:
\begin{itemize}
\item the pebble tree fan~$\pebbleTreeFan[1][b,u]$ (\aka braid fan) is the normal fan of the classical permutahedron, which can be obtained for $h_B = \binom{b+1}{2} - \binom{|B|+1}{2}$ (the functions $f$ and~$g$ are irrelevant here, since $[s,t]$ is constant to~$[1]$),
\item the pebble tree fan~$\pebbleTreeFan[\ell][0,0]$ (\aka sylvester fan) is the normal fan of the classical associahedron~\cite{ShniderSternberg,Loday}, which can be obtained for~$f_X = \binom{\ell+1}{2} - \binom{|X|+1}{2}$ (the functions~$g$ and~$h$ are irrelevant here, since~$B$ is constant to~$\varnothing$).
\end{itemize}
\cref{fig:pebbleTreePolytopes} illustrates polytopal realizations of the pebble tree fans~$\pebbleTreeFan[3][0,1]$ and~$\pebbleTreeFan[2][1,1]$.
Note that, while they have the same combinatorics by~\cref{prop:pebbleTreePosetOperations}, their geometric realizations differ.
\begin{figure}
%	\centerline{\input{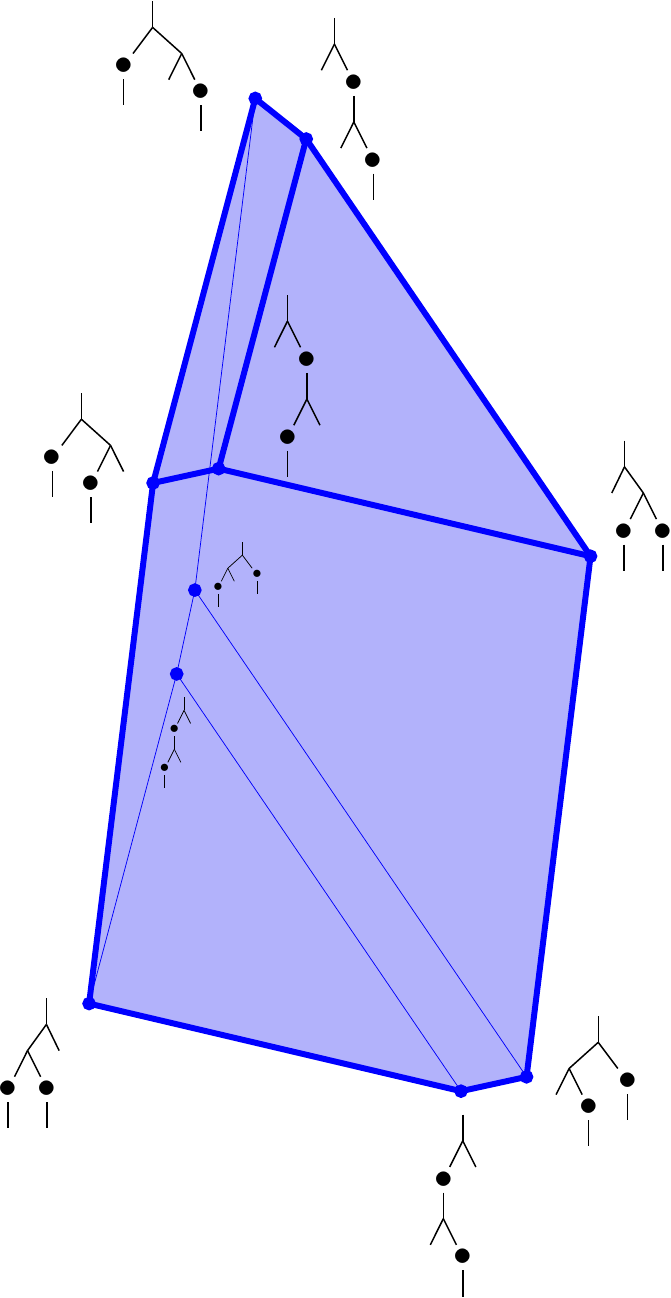} \hspace{.3cm} \input{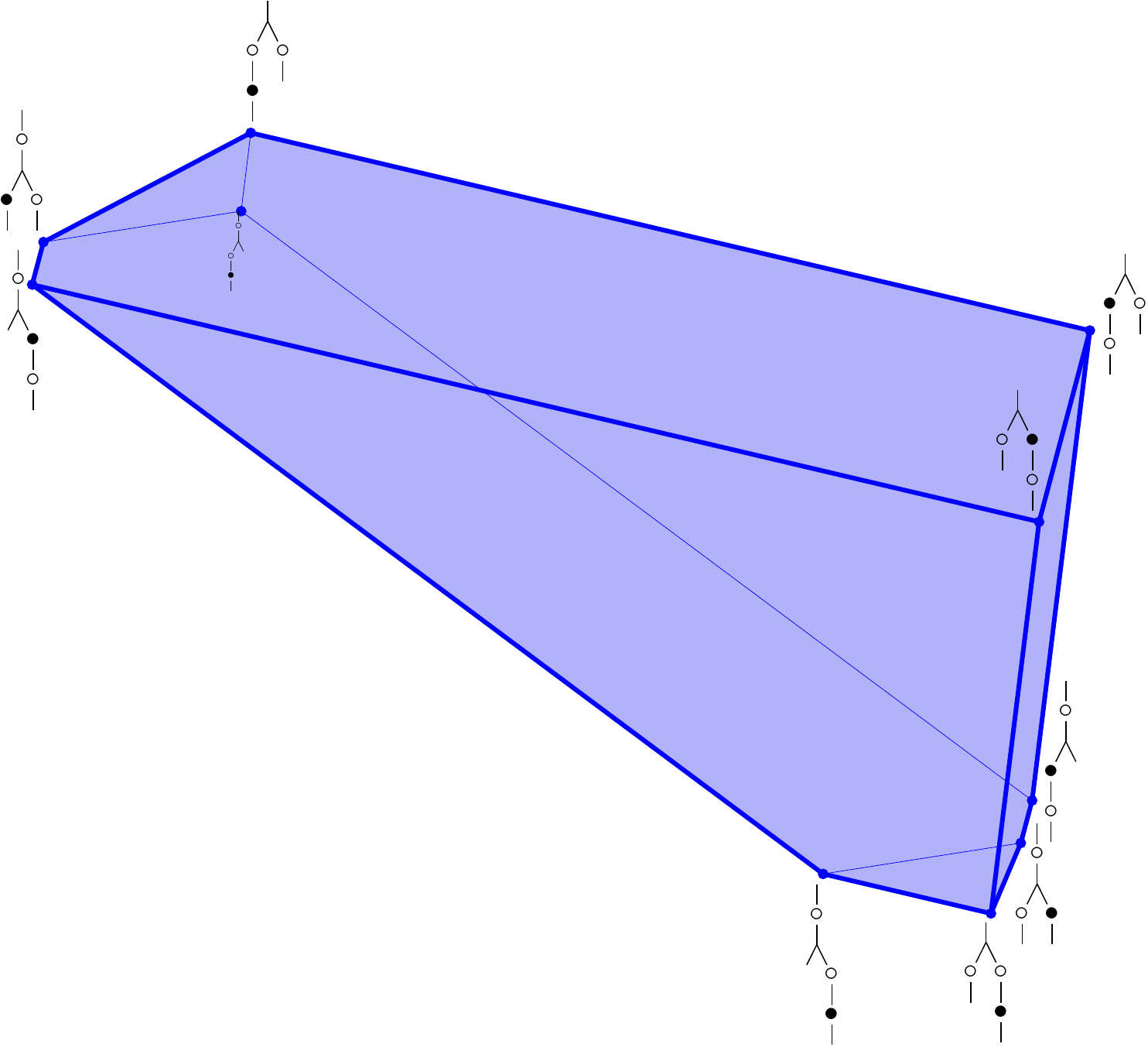}}
	\centerline{\includegraphics[scale=.3]{pebbleTreePolytope301} \hspace{.3cm} \includegraphics[scale=.3]{pebbleTreePolytope211}}
	\caption{The pebble tree polytopes~$\pebbleTreePolytope[3][0,1]$ (left) and~$\pebbleTreePolytope[2][1,1]$ (right).}
	\label{fig:pebbleTreePolytopes}
\end{figure}
\end{example}

Finally, we translate the first three points of \cref{prop:pebbleTreePosetOperations,rem:bijectionPebbleTreesOrientedTrees2} to pebble tree polytopes.
Note that the last two transformations of \cref{prop:pebbleTreePosetOperations} do not respect the geometry of the pebble tree polytopes.

\begin{proposition}
\label{prop:pebbleTreePolytopeOperations}
Consider the operations of \cref{def:mirroringMap,def:balancingMap,def:insertingMap}.
\begin{enumerate}
\item \label{prop:pebbleTreePolytopeMirroringMap} The map defined by $\b{e}_{\ell j + i - \delta_{j \ne 0}} \mapsto \b{e}_{\ell (j + 1) - i}$ for any~$(i,j) \in  ([\ell] \times [0,b+u]) \ssm \{(\ell,0)\}$ induces an isometry of the pebble tree polytope~$\pebbleTreePolytope[\ell][b,u](f,g,h)$.
\item \label{prop:pebbleTreePolytopeBalancingMap}
If~$u > 1$, the pebble tree polytope~$\pebbleTreePolytope[\ell][b,u](f,g,h)$ is a facet of the pebble tree polytope $\pebbleTreePolytope[\ell][b+1,u-1](f,g,h)$. Hence, $\pebbleTreePolytope[\ell][b,u](f,g,h)$ is a codimension~$u$ face of $\pebbleTreePolytope[\ell][b+u,0](f,g,h)$.
\item \label{prop:pebbleTreePolytopeInsertingMap} The pebble tree polytope~$\pebbleTreePolytope[\ell][b,u](f,g,h)$ is a codimension~$\ell$ face of the pebble tree polytope~$\pebbleTreePolytope[\ell][b+1,u](f,g,h')$ where~$h(X) = h'(\set{x+1}{x \in X})$ for~$X \subseteq [b+u]$.
\end{enumerate}
\end{proposition}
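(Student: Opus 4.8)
The plan is to deduce all three statements from the combinatorial identifications of \cref{prop:pebbleTreeComplexOperations}, together with the fact (\cref{thm:pebbleTreePolytope1}) that each pebble tree polytope realizes its pebble tree fan. For \eqref{prop:pebbleTreePolytopeBalancingMap} and \eqref{prop:pebbleTreePolytopeInsertingMap}, \cref{prop:pebbleTreeComplexOperations} presents $\pebbleTreeComplex[\ell][b,u]$ as the link, in a larger pebble tree complex, of an explicit simplex: the vertex $[\ell]\boxtimes[b]$ for balancing, and the simplex $\set{\{i\}\boxtimes[1]}{i\in[\ell]}$ for inserting. Since the larger polytope has the larger fan as its normal fan, this simplex corresponds to a face $\polytope{F}$ of that polytope --- the face obtained by turning the inequalities indexed by the simplex into equalities --- and the face lattice of $\polytope{F}$ is thereby already identified combinatorially with that of $\pebbleTreePolytope[\ell][b,u]$. (For inserting one checks first that the $\ell$ rays $\ray[{\{i\}\boxtimes[1]}]$, $i\in[\ell]$, are linearly independent and span a cone of the simplicial fan $\pebbleTreeFan[\ell][b+1,u]$ by \cref{thm:pebbleTreeFan}, so that $\polytope{F}$ really is a nonempty codimension-$\ell$ face.) The remaining, genuinely geometric, task is to realize this combinatorial isomorphism by an explicit affine change of coordinates; part \eqref{prop:pebbleTreePolytopeMirroringMap} is easier, since the complex automorphism of Point~\eqref{prop:pebbleTreeComplexMirroringMap} of \cref{prop:pebbleTreeComplexOperations} is already given by a coordinate map.

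For \eqref{prop:pebbleTreePolytopeMirroringMap}, I would observe that the prescribed map $\phi$ is a permutation of the basis $(\b{e}_m)_{m\in I}$ --- it reverses the block $[1,\ell-1]$ and each block $[\ell j,\ell(j+1)-1]$ with $1\le j\le b+u$ --- hence an isometry of $\R^I$ that fixes each $I_i$ setwise, so it preserves $\HH_\ell^{b,u}$; moreover $\phi(\ray[J])=\ray[{\phi(J)}]$ for every $J\subseteq I$ (since $\phi$ permutes blocks), and a direct computation gives $\phi([s,t]\boxtimes B)=[\ell-t+1,\ell-s+1]\boxtimes B$. Thus $\phi$ swaps the inequality indexed by $([s,t],B)$ with the one indexed by $([\ell-t+1,\ell-s+1],B)$ and carries along its right-hand side, which realizes the complex automorphism of Point~\eqref{prop:pebbleTreeComplexMirroringMap} of \cref{prop:pebbleTreeComplexOperations}; in particular $\phi$ is an isometric automorphism of $\pebbleTreePolytope[\ell][b,u](f,g,h)$ itself whenever $f$ and $g$ are invariant under reversal of intervals (for instance when they depend only on cardinality).

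For \eqref{prop:pebbleTreePolytopeBalancingMap} and \eqref{prop:pebbleTreePolytopeInsertingMap}, the key tool is the formula $\dotprod{\ray[J]}{\b{x}}=\sum_{m\in J}|I_i|\,x_m$ (sum over $m\in J$, where $I_i$ is the block containing $m$), valid for $\b{x}$ in the relevant $\HH$-subspace, which follows at once from $\ray[J]=\sum_i\big(|I_i|\,\one_{I_i\cap J}-|I_i\cap J|\,\one_{I_i}\big)$. It shows that imposing $\ray[{\{m\}}]$ tight pins the coordinate $x_m$ to a constant, so that --- after the block relabelling coming from the underlying bijection of pebble trees (the identity on colours for balancing; colour $p$ of the smaller space becoming colour $p+1$ of the larger, with the new colour-$1$ block of the larger space carrying the pinned-down coordinates, for inserting) --- the affine span of $\polytope{F}$ is a translate of the natural copy of the target space $\HH_\ell^{b,u}$. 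One then writes down the affine map $\Phi$ from $\HH_\ell^{b,u}$ onto this affine span that performs this relabelling, rescales each block by the scalar making every wall functional transform by one and the same positive factor (this compensates for the larger space having one more colour block inside its $I_0$), and translates so that the image lands in the affine span of $\polytope{F}$. Using the formula above and the description of which rays of the larger complex lie in the link in question --- $\ray[{[s,t]\boxtimes B}]$ for balancing and $\ray[{[s,t]\boxtimes(\{1\}\sqcup(B+1))}]$ for inserting --- one checks that $\Phi$ carries the wall inequalities of $\pebbleTreePolytope[\ell][b,u]$ bijectively onto the wall inequalities active on $\polytope{F}$; for inserting this is precisely where the hypothesis $h(X)=h'(\set{x+1}{x\in X})$ is used to match the right-hand sides.

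The hard part is this last verification. Since $\polytope{F}$ does not pass through the origin, $\Phi$ genuinely involves both a rescaling and a translation, and one must check that the scalars these introduce in each wall inequality combine exactly right --- the rescaling absorbing the change in the block sizes $|I_i|$, the translation absorbing the constants coming from the coordinates of $\polytope{F}$ pinned down by the equalities. Keeping track of these scalars block by block (and, where necessary, recording the precise strictly submodular functions realizing the identification) is the delicate point; everything else reduces to the block bookkeeping and to the link statements of \cref{prop:pebbleTreeComplexOperations}.
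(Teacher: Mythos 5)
Your route — identify the face of the larger polytope through its normal fan (\cref{thm:pebbleTreeFan,thm:pebbleTreePolytope1}) and the link isomorphisms of \cref{prop:pebbleTreeComplexOperations} — is the natural one (the paper states this proposition without proof, as a translation of those results), and your first paragraph already contains the part of the argument that is solid: the fixed simplex spans a cone of the larger fan, so it determines a nonempty face of the expected codimension, whose face lattice is the star of that cone, i.e.\ the link of the simplex, i.e.\ the face poset of $\pebbleTreeComplex$, i.e.\ the face lattice of $\pebbleTreePolytope(f,g,h)$. Your treatment of Point~(1) is also correct, including the observation that one only gets a self-isometry when $f$ and $g$ are invariant under interval reversal (in general the coordinate permutation carries $\pebbleTreePolytope(f,g,h)$ isometrically onto the pebble tree polytope with reversed $f$ and $g$).

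The genuine gap is in the ``last verification'' you defer for Points~(2)--(3): the affine map $\Phi$ (block rescaling plus translation) carrying the wall inequalities of $\pebbleTreePolytope(f,g,h)$ exactly onto the inequalities active on the face does not exist for general admissible $f,g,h$, so this step is not merely delicate, it would fail. Two concrete obstructions. For balancing, the restrictions to the common direction space $\HH_\ell^{b,u}$ of $\ray[J]$ and of the big-structure ray for the same label are not positively proportional as soon as $J$ meets both $I_0$ and some $I_i$ with $i\ge 2$ (the blocks $I_0$ and $I_1$ are merged in the larger structure), so the face is not even a translate of $\pebbleTreePolytope(f,g,h)$; the block rescaling you invoke is forced, and after it the right-hand sides still disagree by the label-dependent constants $d_J$ coming from the pinned coordinates, which are not a common positive multiple of the heights $f_{[s,t]}+g_{[s,t]}|B|+h_B$ (their dependence on $B$ is affine, whereas $h_B$ is strictly submodular). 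A minimal instance of Point~(3): for $\ell=3$, $b=u=0$, the codimension-$3$ face of $\pebbleTreePolytope[3][1,0](f,g,h')$ that you describe is a segment of Euclidean length $\tfrac{\sqrt2}{5}\big(f_{[1,2]}+f_{[2,3]}-f_{\{2\}}+g_{[1,2]}+g_{[2,3]}-g_{\{2\}}+h'_{\{1\}}\big)$, while $\pebbleTreePolytope[3][0,0]$ is a segment of length $\tfrac{\sqrt2}{2}\big(f_{[1,2]}+f_{[2,3]}\big)$; no relation $h(X)=h'(\set{x+1}{x\in X})$ can reconcile these, since $h$ does not even enter the smaller polytope. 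So the statement can only hold, and should only be proved, in the combinatorial (normal-equivalence) sense: the normal fan of the face is the image of $\pebbleTreeFan$ under the invertible block rescaling, hence the face realizes $\pebbleTreePolytope$ — which is exactly what the application to assocoipahedra (\cref{rem:bijectionPebbleTreesOrientedTrees4}) requires, and what your first paragraph proves; the attempted exact matching of inequalities should be dropped rather than completed.
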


\begin{remark}
\label{rem:bijectionPebbleTreesOrientedTrees4}
Following \cref{rem:bijectionPebbleTreesOrientedTrees1,rem:bijectionPebbleTreesOrientedTrees2,rem:bijectionPebbleTreesOrientedTrees3}, observe that for any signature~$\alpha \in \{\textsc{i}, \textsc{o}\}^{\ell+1}$, the $\alpha$-assocoipahedron of~\cite{PoirierTradler}  is realized by a face of the pebble tree polytope~$\pebbleTreePolytope[\ell][1,0]$.
For instance, \cref{fig:assocoipahedra} shows faces of the pebble tree polytopes~$\pebbleTreePolytope[4][1,0]$ and~$\pebbleTreePolytope[3][1,0]$ which realize the $\alpha$-assocoipahedra for $\alpha = \textsc{oiioi}$ and~$\alpha = \textsc{oooi}$ presented in \cite[Figs.~8, 9, 14 \& 15]{PoirierTradler}.
Note that the combinatorics of the $\textsc{oooi}$-assocoipahedra represented in \cref{fig:assocoipahedra}\,(right) is also isomorphic to the pebble tree polytopes~$\pebbleTreePolytope[4][0,1]$ represented in \cref{fig:pebbleTreePolytopes} by combining Points~\eqref{prop:pebbleTreeComplexBalancingMap} and~\eqref{prop:pebbleTreeComplexRerootingMap} of~\cref{prop:pebbleTreeComplexOperations}.
\begin{figure}
%	\centerline{\input{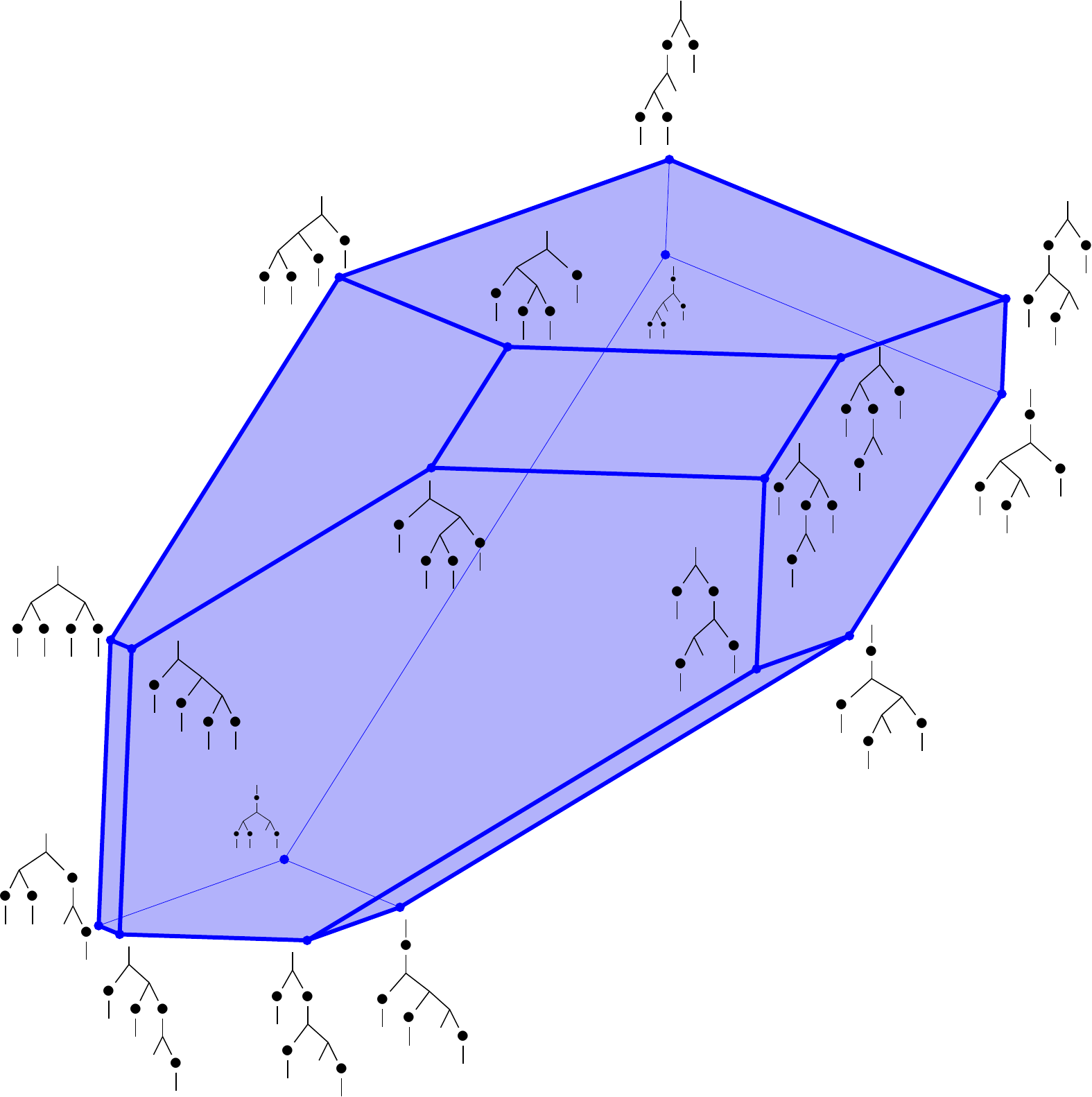} \hspace{.2cm} \input{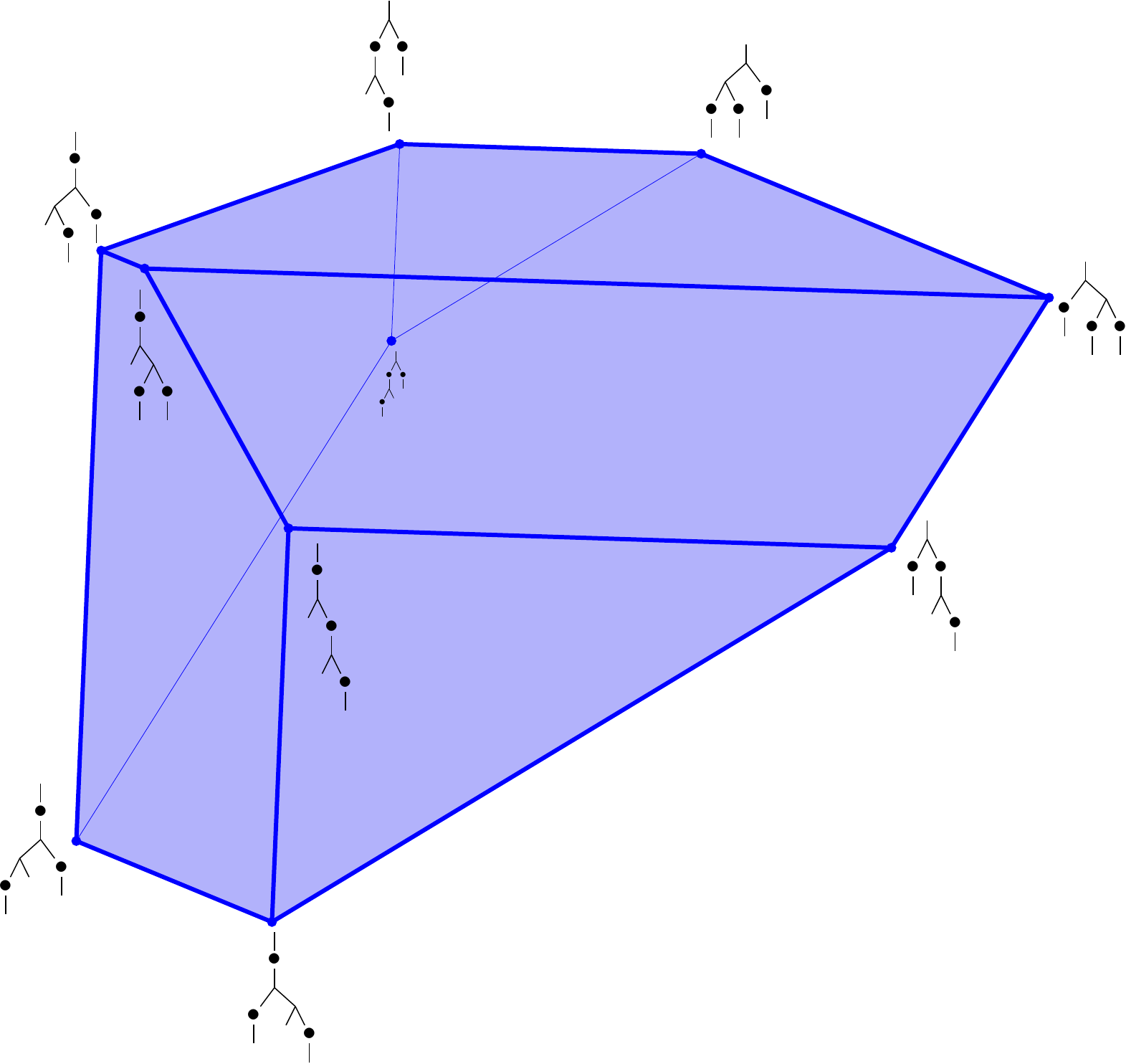}}
	\centerline{\includegraphics[scale=.3]{assocoipahedronOIIOI} \hspace{.2cm} \includegraphics[scale=.3]{figures/assocoipahedronOOOI}}
	\caption{The $\alpha$-assocoipahedra for $\alpha = \textsc{oiioi}$ (left) and~$\alpha = \textsc{oooi}$ (right), realized as faces of the pebble tree polytopes~$\pebbleTreePolytope[4][1,0]$ and~$\pebbleTreePolytope[3][1,0]$.}
	\label{fig:assocoipahedra}
\end{figure}
\end{remark}

%%%%%%%%%%%%%%%%%%%%%%%%%%%%%%%%%%%%%%

\pagebreak

\section{Pebble tree numerology}
\label{sec:pebbleTreeNumerology}

In this section, we compute the generating functions of the maximal pebble trees (\cref{subsec:enumerationMPT}) and of all the pebble trees (\cref{subsec:enumerationPT}), and gather explicit expansions of these generating functions (\cref{subsec:generatingFunctions}).
While the methods are standard computations based on generatingfunctionology~\cite{FlajoletSedgewick}, the results reveal a few surprises.
All references like~\OEIS{A000108} are entries of the Online Encyclopedia of Integer Sequences~\cite{OEIS}.

%%%%%%%%%%

\subsection{Enumeration of maximal pebble trees}
\label{subsec:enumerationMPT}

We start with the enumeration of the maximal pebble trees which is significantly simpler.

\begin{definition}
\label{def:generatingFunctionsMPT}
For~$\ell, u, b \in \N$, we denote by $m_\ell^{b,u}$ the number of maximal pebble trees of~$\pebbleTrees$ (\ie with $\ell$ leaves, $b$ balanced and $u$ unbalanced~colors).
We consider the generating functions
\[
\maximalPebbleTreeGeneratingFunction(x) \eqdef \sum_{\ell = 1}^\infty m_\ell^{b,u} \, x^\ell
\qquad\text{and}\qquad
\maximalPebbleTreeGeneratingFunction[k](x,y) \eqdef \sum_{u = 0}^k \maximalPebbleTreeGeneratingFunction[k-u,u](x) \, \frac{y^u}{u!} = \sum_{\ell = 1}^\infty \sum_{u = 0}^k m_\ell^{k-u,u} \, x^\ell \, \frac{y^u}{u!}.
\]
\end{definition}

\begin{proposition}
\label{prop:generatingFunctionsMPT}
The generating functions~$\maximalPebbleTreeGeneratingFunction(x)$ satisfy the functional equations
\[
\maximalPebbleTreeGeneratingFunction(x) = x \cdot \delta_{b = 0} + b \cdot  \maximalPebbleTreeGeneratingFunction[b-1,u+1](x) + \sum_{v = 0}^u \binom{u}{v} \cdot \maximalPebbleTreeGeneratingFunction[b+v,u-v](x) \cdot \maximalPebbleTreeGeneratingFunction[b+u-v,v](x),
\]
where~$\delta$ is the Kronecker delta.
Hence, $\maximalPebbleTreeGeneratingFunction(x)$ is algebraic for any~$b,u \in \N$.
Moreover, the generating function~$\maximalPebbleTreeGeneratingFunction[k](x,y)$ satisfies the differential equation
\[
\maximalPebbleTreeGeneratingFunction[k](x,y) = x \, \frac{y^k}{k!} + \Big( k \, \frac{\partial}{\partial y} - y \, \frac{\partial^2}{\partial y^2} \Big) \, \maximalPebbleTreeGeneratingFunction[k](x,y) + \maximalPebbleTreeGeneratingFunction[k](x,y)^2.
\]
\end{proposition}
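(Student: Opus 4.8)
The plan is to set up a recursive decomposition of maximal pebble trees according to the structure at the root, and then translate that decomposition into the stated functional, differential, and algebraicity claims. Recall from \cref{rem:rankedContractionPoset} that a maximal pebble tree has only unary nodes carrying exactly one pebble and binary nodes carrying no pebble. The key observation is that a maximal pebble tree $T \in \pebbleTrees[\ell][b,u]$ with $\ell \geq 2$ cannot have a unary root (a unary root would force its child subtree to have one more pebble of the root's color than leaves, violating the balanced/unbalanced constraint for that subtree unless the color is already accounted for — more precisely, the root's color becomes balanced at the root, so the child is unbalanced in that color, but then recursing one sees the only way out is $\ell=1$); hence for $\ell \geq 2$ the root is binary with two maximal pebble subtrees $T_1, T_2$, and I must track how the balanced/unbalanced status of the $b+u$ colors distributes between $T_1$ and $T_2$. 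Concretely, for a color $\gamma$ that is balanced at the root ($\gamma \in B$), exactly one of $T_1, T_2$ is $\gamma$-balanced; for a color $\gamma$ unbalanced at the root ($\gamma \in U$), both $T_1$ and $T_2$ are $\gamma$-unbalanced. This is exactly the arithmetic already recorded in the remark after \cref{def:pebbleTree}: the number of $\gamma$-pebbles at a node is $\big(\sum \pebbleDefault[T_{c_i}]\big) - \pebbleDefault[T_p]$, which at a binary pebble-free node forces $\pebbleDefault[T_p] = \pebbleDefault[T_{c_1}] + \pebbleDefault[T_{c_2}]$, i.e. defaults add and lie in $\{0,1\}$.

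First I would handle the base case $\ell = 1$: the only maximal pebble tree in $\pebbleTrees[1][b,u]$ is a chain of $b$ unary nodes (one per balanced color) over a leaf, which exists only when $u=0$, contributing the term $x \cdot \delta_{b=0}$ when $b=0$, and more generally one checks $m_1^{b,u} = \delta_{u=0}$; this accounts for the $x \cdot \delta_{b=0}$ in $\maximalPebbleTreeGeneratingFunction[b,u]$ precisely because the generating function variable $y$ in $\maximalPebbleTreeGeneratingFunction[k]$ records $u$. Wait — I must be careful: the term $x\cdot\delta_{b=0}$ as written sits inside $\maximalPebbleTreeGeneratingFunction[b,u](x)$, so I should double-check against small cases (e.g. $\pebbleTreePoset[2][1,1]$ and $\pebbleTreePoset[3][0,1]$ in \cref{fig:contractionPoset211,fig:contractionPoset301}), using the balancing map of \cref{prop:pebbleTreePosetOperations}\eqref{prop:pebbleTreePosetBalancingMap} to reconcile the apparent discrepancy; indeed a unary root with a balanced pebble yields the term $b \cdot \maximalPebbleTreeGeneratingFunction[b-1,u+1](x)$ — there are $b$ choices for which balanced color sits at the unary root, and removing it leaves a maximal pebble tree whose root is no longer unary, now with $b-1$ balanced and $u+1$ unbalanced colors. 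Then the binary-root case gives $\sum_{v=0}^{u} \binom{u}{v} \maximalPebbleTreeGeneratingFunction[b+v,u-v](x) \cdot \maximalPebbleTreeGeneratingFunction[b+u-v,v](x)$: the $\binom{u}{v}$ counts the ways to send $v$ of the $u$ root-unbalanced colors to become balanced in $T_1$ (hence unbalanced in $T_2$ being forced) — wait, this needs rereading; the balanced colors at the root split, so really we choose for each $\gamma\in B$ which child is $\gamma$-balanced and for each $\gamma\in U$ nothing is chosen. Let me re-derive: if $T_1$ has $b_1$ balanced among the $b$ root-balanced colors then $T_2$ has the other $b-b_1$, but $T_1, T_2$ also inherit all $u$ root-unbalanced colors as unbalanced, plus... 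The clean bookkeeping that matches the stated formula is to sum over $v$ = number of colors balanced in $T_1$ but unbalanced in $T_2$; I will verify the binomial coefficient and ranges carefully against $\maximalPebbleTreeGeneratingFunction[1]$ and $\maximalPebbleTreeGeneratingFunction[2]$ computed from \cref{fig:contractionPoset301}.

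With the functional equation in hand, algebraicity of $\maximalPebbleTreeGeneratingFunction[b,u](x)$ follows by induction: for fixed $b+u$ the equation expresses $\maximalPebbleTreeGeneratingFunction[b,u]$ as an algebraic function (a quadratic, in fact) of itself, of $\maximalPebbleTreeGeneratingFunction[b-1,u+1]$, and of the finitely many $\maximalPebbleTreeGeneratingFunction[b+v,u-v], \maximalPebbleTreeGeneratingFunction[b+u-v,v]$ with strictly larger total number of colors or smaller such, so one induces downward on $b$ (with $b+u$ fixed or handled as a second parameter) and invokes closure of the class of algebraic functions under the field operations and under taking roots of polynomials. Finally, for the differential equation: I multiply the functional equation by $y^u/u!$, sum over $0 \leq u \leq k$ with $b = k-u$, and recognize each piece — the term $\sum_u b\, \maximalPebbleTreeGeneratingFunction[b-1,u+1] \, y^u/u!$ becomes $\big(k\,\tfrac{\partial}{\partial y} - y\,\tfrac{\partial^2}{\partial y^2}\big)\maximalPebbleTreeGeneratingFunction[k]$ after reindexing (the operator $k\partial_y - y\partial_y^2$ acts on $y^u/u!$ by producing a combination of $y^{u-1}$ terms with the right coefficients), the convolution-in-$v$ binomial sum becomes the square $\maximalPebbleTreeGeneratingFunction[k](x,y)^2$ by the exponential-generating-function product rule (the $\binom{u}{v}$ is exactly what makes $y^v/v! \cdot y^{u-v}/(u-v)!$ assemble into $y^u/u!$), and $x\,\delta_{b=0}$ becomes $x\, y^k/k!$. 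The main obstacle I anticipate is getting the color-splitting combinatorics in the binary case exactly right — pinning down which subtree "keeps" which colors and confirming the factor $\binom{u}{v}$ and the index shifts $b+v, u-v$ versus $b+u-v, v$ — together with verifying the identity $\big(k\partial_y - y\partial_y^2\big)(y^u/u!)$ produces precisely the coefficient $b = k-u$ times $y^{u-1}/(u-1)!$ needed to match the $b\cdot\maximalPebbleTreeGeneratingFunction[b-1,u+1]$ term; both are routine once the decomposition is stated correctly, but they are where sign or off-by-one errors would hide.
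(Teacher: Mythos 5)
Your overall route is the same as the paper's (decompose a maximal pebble tree at its root into leaf, unary root, or binary root; read off the functional equation; deduce algebraicity from the finite polynomial system for a fixed $b+u$; and pass to the exponential variable $y$ to obtain the differential equation --- your EGF manipulation and the algebraicity argument are fine in outline). But the structural decomposition itself, which is the entire content of the proof, is wrong or left unresolved at several places. First, maximal pebble trees with $\ell \ge 2$ can perfectly well have a unary root: a unary root carrying a $\gamma$-pebble simply means the root is $\gamma$-balanced while its unique child is $\gamma$-unbalanced, with no constraint whatsoever on $\ell$ (most trees in \cref{fig:flipGraph211} have a unary root and two leaves); this is precisely why the term $b \cdot \maximalPebbleTreeGeneratingFunction[b-1,u+1](x)$ contributes in every degree, not only at $x^1$. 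Your parenthetical ``recursing, the only way out is $\ell=1$'' is not a proof and contradicts your own later derivation of that term (whose justification also contains the false claim that the child's root cannot be unary --- chains of unary nodes with distinct colors are allowed). Relatedly, your base case is wrong: $m_1^{b,u} = b!$ (a chain of $b$ unary nodes in any order, existing for every $u$), not $\delta_{u=0}$; the term $x \cdot \delta_{b=0}$ accounts only for the bare leaf, which requires $b=0$ but exists for all $u$, while the single-leaf trees with $b \ge 1$ are generated by the unary term of the recursion.

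Second, and most importantly, you state the color-splitting rule at a pebble-free binary node backwards and never settle it. Additivity of defaults, $\pebbleDefault[T_p] = \pebbleDefault[T_{c_1}] + \pebbleDefault[T_{c_2}]$ (which you write down correctly), together with defaults lying in $\{0,1\}$, forces: each root-balanced color is balanced in \emph{both} children, and each root-unbalanced color is unbalanced in \emph{exactly one} child and balanced in the other. This is the opposite of your opening claim, and your ``corrected'' reading (``the balanced colors at the root split \dots for each $\gamma \in U$ nothing is chosen'') is again the wrong way round and would produce a sum over subsets of the balanced colors with a factor $\binom{b}{\cdot}$, i.e.\ a different and incorrect equation. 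The correct rule is exactly what yields the factor $\binom{u}{v}$ (the choice of which $v$ of the $u$ root-unbalanced colors are unbalanced in the second child) and the indices in $\maximalPebbleTreeGeneratingFunction[b+v,u-v](x) \cdot \maximalPebbleTreeGeneratingFunction[b+u-v,v](x)$. Since you explicitly defer this verification to a later check against small cases, the key step of the proof is missing rather than established.
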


\begin{proof}
A maximal pebble tree in~$\pebbleTrees[{[s,t]}][B,U]$ is:
\begin{itemize}
\item either a leaf if~$s = t$ and~$B = \varnothing$,
\item or a unary node with a single pebble colored by~$\pebbleColor \in B$ and a child in~$\pebbleTrees[{[s,t]}][B \ssm \{\pebbleColor\},U \cup \{\pebbleColor\}]$,
\item or a binary node with no pebble and two children in~$\pebbleTrees[{[s,r]}][B,V]$ and~$\pebbleTrees[{[r,t]}][B,U \ssm V]$ for some~$r \in [s,t]$ and~$V \subseteq U$.
\end{itemize}
The functional equations for~$\maximalPebbleTreeGeneratingFunction(x)$ are immediate consequences of this structural decomposition by classical generatingfunctionology~\cite{FlajoletSedgewick}.
The algebraicity of~$\maximalPebbleTreeGeneratingFunction(x)$ follows as it belongs to a system of finitely many polynomial equations (all equations for a given sum~$b+u$), and the functional equation for~$\maximalPebbleTreeGeneratingFunction[k](x,y)$ is obtained by classical manipulations on the functional equations for~$\maximalPebbleTreeGeneratingFunction(x)$.
\end{proof}

\begin{example}
When~$b = u = 0$, we recover the functional equation
\(
\maximalPebbleTreeGeneratingFunction[0,0](x) = x + \maximalPebbleTreeGeneratingFunction[0,0](x)^2
\)
which yields the classical Catalan generating function
\[
\maximalPebbleTreeGeneratingFunction[0,0](x) = \frac{1-\sqrt{1-4x}}{2} = 
x + x^2 + 2 x^3 + 5 x^4 + 14 x^5 + 42 x^6 + 132 x^7 + 429 x^8 + 1430 x^9 + \dots \hspace{.3cm} (\OEIS{A000108})
\]
%sage: maximal_pebble_trees_generating_function(x, 0, 0, 9)                                                                                                                                                  
%1430*x^9 + 429*x^8 + 132*x^7 + 42*x^6 + 14*x^5 + 5*x^4 + 2*x^3 + x^2 + x
\end{example}

\begin{example}
For $b + u = 1$, we obtain that
\[
\maximalPebbleTreeGeneratingFunction[1,0](x) = \maximalPebbleTreeGeneratingFunction[0,1](x) + \maximalPebbleTreeGeneratingFunction[1,0](x) ^2
\qquad\text{and}\qquad
\maximalPebbleTreeGeneratingFunction[0,1](x) = x + 2 \cdot \maximalPebbleTreeGeneratingFunction[1,0](x) \cdot \maximalPebbleTreeGeneratingFunction[0,1](x) 
\]
from which we can compute the expansions
\begin{align*}
\maximalPebbleTreeGeneratingFunction[1,0](x) & =  x + 3 x^2 + 16 x^3 + 105 x^4 + 768 x^5 + 6006 x^6 + 49152 x^7 + \dots \qquad (\OEIS{A085614})
\\
\maximalPebbleTreeGeneratingFunction[0,1](x) & =  x + 2 x^2 + 10 x^3 + 64 x^4 + 462 x^5 + 3584 x^6 + 29172 x^7 + \dots \qquad\;\; (\OEIS{A078531})
\end{align*}
These functions actually both satisfy a cubic equation, namely
\[
2 \cdot \maximalPebbleTreeGeneratingFunction[1,0](x)^3 - 3 \cdot \maximalPebbleTreeGeneratingFunction[1,0](x)^2 + \maximalPebbleTreeGeneratingFunction[1,0](x) - x = 0
\qquad\text{and}\qquad
4 \cdot \maximalPebbleTreeGeneratingFunction[0,1](x)^3 - \maximalPebbleTreeGeneratingFunction[0,1](x)^2 + x^2 = 0.
\]
%proof:
%If $A = B + A^2$ and $B = x + 2AB$, then
%$B = A(1-A)$ so that $A(1-A) = x + 2A^2(1-A)$ which is the first equation, and
%$B^2 - $B^3 - x^2 = 4xAB + 4A^2B^2 - 4B^3 = 4B ( xA + A^2B - B^2 ) = 4B ( AB - A^2B - B^2 ) = 4AB ( A - A^2 - B ) = 0$ which is the second equation.
\end{example}

\begin{example}
\label{exm:10Catalan}
For $b + u = 2$, we obtain
\begin{align}
\maximalPebbleTreeGeneratingFunction[2,0](x) & = 2 \cdot \maximalPebbleTreeGeneratingFunction[1,1](x) + \maximalPebbleTreeGeneratingFunction[2,0](x)^2,
\label{eq:maximalPebbleTreeGeneratingFunction20}
\\
\maximalPebbleTreeGeneratingFunction[1,1](x) & = \maximalPebbleTreeGeneratingFunction[0,2](x) + 2 \cdot \maximalPebbleTreeGeneratingFunction[2,0](x) \cdot \maximalPebbleTreeGeneratingFunction[1,1](x),
\label{eq:maximalPebbleTreeGeneratingFunction11}
\\
\maximalPebbleTreeGeneratingFunction[0,2](x) & = x + 2 \cdot \maximalPebbleTreeGeneratingFunction[2,0](x) \cdot \maximalPebbleTreeGeneratingFunction[0,2](x) + 2 \cdot \maximalPebbleTreeGeneratingFunction[1,1](x)^2.
\label{eq:maximalPebbleTreeGeneratingFunction02}
\end{align}
From which we can compute the expansions:
\begin{align*}
\maximalPebbleTreeGeneratingFunction[2,0](x) & =  2 x + 24 x^2 + 496 x^3 + 12560 x^4 + 354048 x^5 + 10665088 x^6 + \dots
\\
\maximalPebbleTreeGeneratingFunction[1,1](x) & = x + 10 x^2 + 200 x^3 + 5000 x^4 + 140000 x^5 + 4200000 x^6 +  \dots \qquad (\OEIS{A156275})
\\
\maximalPebbleTreeGeneratingFunction[0,2](x) & =  x + 6 x^2 + 112 x^3 + 2728 x^4 + 75360 x^5 + 2242304 x^6 + \dots
\end{align*}
The expansion of~$\maximalPebbleTreeGeneratingFunction[1,1](x)$ is quite surprising, but can be explained by a tiny functional miracle.
Indeed, observe that we obtain that $2 \cdot \maximalPebbleTreeGeneratingFunction[1,1](x) = \maximalPebbleTreeGeneratingFunction[2,0](x) \cdot \big(1 - \maximalPebbleTreeGeneratingFunction[2,0](x) \big)$ from \cref{eq:maximalPebbleTreeGeneratingFunction20} and that $\maximalPebbleTreeGeneratingFunction[0,2](x) = \maximalPebbleTreeGeneratingFunction[1,1](x) \cdot \big(1 - 2 \cdot \maximalPebbleTreeGeneratingFunction[2,0](x) \big)$ from \cref{eq:maximalPebbleTreeGeneratingFunction11}.
Replacing $\maximalPebbleTreeGeneratingFunction[0,2](x)$ on both sides of \cref{eq:maximalPebbleTreeGeneratingFunction02}, we obtain
\begin{equation}
\maximalPebbleTreeGeneratingFunction[1,1](x) = x + 4 \cdot \maximalPebbleTreeGeneratingFunction[1,1](x) \cdot \maximalPebbleTreeGeneratingFunction[2,0](x) \cdot \big( 1 - \maximalPebbleTreeGeneratingFunction[2,0](x) \big) + 2 \cdot \maximalPebbleTreeGeneratingFunction[1,1](x)^2 = x + 10 \cdot \maximalPebbleTreeGeneratingFunction[1,1](x)^2.
\label{eq:10Catalan}
\end{equation}
This shows that~$m_\ell^{1,1} = 10^{\ell-1} C_\ell$, where~$C_\ell = \frac{1}{\ell+1} \binom{2\ell}{\ell}$ is the Catalan number.

%We have no combinatorial explanation for the Catalan numbers to appear again here.
Consider the map sending a maximal pebble tree of~$\pebbleTrees[\ell][1,1]$ to its underlying binary tree.
In view of the formula~${m_\ell^{1,1} = 10^{\ell-1} C_\ell}$, it is natural to expect that its fibers all have size~$10^{\ell-1}$.
However, while the size of the fiber of a binary tree~$T$ is clearly invariant under reordering the children of~$T$, it is not constant on all binary trees already for~$\ell = 4$.
Namely, the fiber of the binary tree whose children are both the tree on $2$ leaves contains $968$ pebble trees of~$\pebbleTrees[4][1,1]$, while the fiber of each of the remaining $4$ binary trees on $4$ leaves contains $1008$ pebble trees of~$\pebbleTrees[4][1,1]$.
%This yields the following problem.

Despite this disappointing observation, one can provide a bijective explanation for the appearance of the Catalan numbers in~$m_\ell^{1,1}$.
It requires the following observation.

\parpic(2.5cm,.1cm)(-2pt, 60pt)[r][b]{
\begin{tabular}{c}
	\pebbleTree{[ [ [ $T_1$ ] [ [ \leaf ] [ [ \leaf ] [ [ \leaf ] [ [ $T_k$ ] [ $\circ$ [ $X$ ] ] ] ] ] ] ] ]} \\[-.5cm]
	\pebbleTree{[ [ [ $T_1$ ] [ [ \leaf ] [ [ \leaf ] [ [ \leaf ] [ [ $T_k$ ] [ [ $Y$ ] [ $Z$ ] ] ] ] ] ] ] ]}
\end{tabular}
}{
\noindent
\begin{minipage}{12.5cm}
\begin{remark}
\label{rem:decomposition}
Any maximal $\circ$-balanced $\bullet$-unbalanced pebble tree can be uniquely obtained from a right comb by attaching 
\begin{itemize}
\item to each left leaf, a maximal $\{\circ, \bullet\}$-balanced pebble tree,
\item to the right leaf, a maximal $\circ$-balanced $\bullet$-unbalanced pebble tree, whose root is
	\begin{itemize}
	\item either a unary node with a $\circ$ pebble, whose unique child~$X$ is $\{\circ, \bullet\}$-unbalanced,
	\item or a binary node with no pebble, whose left child~$Y$ is $\circ$-balanced $\bullet$-unbalanced and whose right child~$Z$ is $\{\circ, \bullet\}$-balanced.
	\end{itemize}
\end{itemize}
These two options are illustrated on the right.
A similar decomposition holds with a left comb instead of a right comb.
\end{remark}
\end{minipage}
\vspace{.2cm}
}

We now proceed to define, for~$\ell, r \ge 1$, an explicit bijection~$\psi$ sending a triple~$(T, L, R)$ of maximal pebble trees of~$\pebbleTrees[2][1,1]$, $\pebbleTrees[\ell][1,1]$ and $\pebbleTrees[r][1,1]$ respectively to a maximal pebble tree of~$\pebbleTrees[\ell + r][1,1]$.
The image~$\psi(T, L, R)$ is described in \cref{fig:bijection}.
Note that~$\psi(T, L, R)$ sometimes depends on the type of~$L$ or~$R$ in the sense of the decomposition of \cref{rem:decomposition}.
In this description, we denote by~$T_{\yinyang}$ the $\bullet$-balanced $\circ$-unbalanced pebble tree obtained by exchanging the $\circ$ and $\bullet$ pebbles in a $\circ$-balanced $\bullet$-unbalanced pebble tree~$T$.
As the decomposition of \cref{rem:decomposition} is unique, the map~$\psi$ is well defined, and it is immediate to check that the resulting trees are maximal pebble trees of~$\pebbleTrees[\ell + r][1,1]$.
Again by~\cref{rem:decomposition}, it is straightforward to check that $\psi$ is bijective.
Finally, as~$m_2^{1,1} = 10$, the existence of the bijection~$\psi$ directly implies Equation~\eqref{eq:10Catalan}, hence the fact that~${m_\ell^{1,1} = 10^{\ell-1} C_\ell}$.

\begin{figure}
	\centerline{
		\begin{tabular}{|c|@{\hspace{.48cm}}c@{\hspace{.48cm}}|@{\hspace{.48cm}}c@{\hspace{.48cm}}|@{\hspace{.48cm}}c@{\hspace{.48cm}}|@{\hspace{.44cm}}c@{\hspace{.44cm}}|@{\hspace{.44cm}}c@{\hspace{.44cm}}|@{\hspace{.44cm}}c@{\hspace{.44cm}}|}
			\hline
			\raisebox{-1cm}{$T$} &
      			\pebbleTree{[ [ [ $\circ$ [ \leaf ] ] [ $\bullet$ [ $\circ$ [ \leaf ] ] ] ] ]} &
      			\pebbleTree{[ [ [ $\bullet$ [ $\circ$ [ \leaf ] ] ] [ $\circ$ [ \leaf ] ] ] ]} &
      			\pebbleTree{[ [ [ $\circ$ [ \leaf ] ] [ $\circ$ [ $\bullet$ [ \leaf ] ] ] ] ]} &
      			\pebbleTree{[ [ [ $\circ$ [ $\bullet$ [ \leaf ] ] ] [ $\circ$ [ \leaf ] ] ] ]} &
      			\pebbleTree{[ [ $\circ$ [ [ $\circ$ [ \leaf ] ] [ $\bullet$ [ \leaf ] ] ] ] ]} &
      			\pebbleTree{[ [ $\circ$ [ [ $\bullet$ [ \leaf ] ] [ $\circ$ [ \leaf ] ] ] ] ]}
			\\
			\hline
			\raisebox{-.8cm}{$\psi(T, L, R)$} &
      			\pebbleTree{[ [ [ $L$ ] [ $\bullet$ [ $R$ ] ] ] ]} &
      			\pebbleTree{[ [ [ $\bullet$ [ $L$ ] ] [ $R$ ] ] ]} &
      			\pebbleTree{[ [ [ $L$ ] [ $\circ$ [ $R_{\yinyang}$ ] ] ] ]} &
      			\pebbleTree{[ [ [ $\circ$ [ $L_{\yinyang}$ ] ] [ $R$ ] ] ]} &
			\pebbleTree{[ [ $\circ$ [ [ $L$ ] [ $R_{\yinyang}$ ] ] ] ]} &
			\pebbleTree{[ [ $\circ$ [ [ $L_{\yinyang}$ ] [ $R$ ] ] ] ]}
			\\
			\hline
		\end{tabular}
	}
	\vspace{.1cm}
	\centerline{
		\begin{tabular}{|c|c|c|c|c|}
			\hline
			\raisebox{-1.3cm}{$T$} &
      			\multicolumn{2}{c|}{\pebbleTree{[ [ $\circ$ [ [ \leaf ] [ $\bullet$ [ $\circ$ [ \leaf ] ] ] ] ] ]}} &
      			\multicolumn{2}{c|}{\pebbleTree{[ [ $\circ$ [ [ \leaf ] [ $\circ$ [ $\bullet$ [ \leaf ] ] ] ] ] ]}}
			\\
			\hline
			\raisebox{-1.3cm}{$L$} &
			\pebbleTree{[ [ [ $T_1$ ] [ [ \leaf ] [ [ \leaf ] [ [ \leaf ] [ [ $T_k$ ] [ $\circ$ [ $X$ ] ] ] ] ] ] ] ]} &
			\pebbleTree{[ [ [ $T_1$ ] [ [ \leaf ] [ [ \leaf ] [ [ \leaf ] [ [ $T_k$ ] [ [ $Y$ ] [ $Z$ ] ] ] ] ] ] ] ]} &
			\pebbleTree{[ [ [ $T_1$ ] [ [ \leaf ] [ [ \leaf ] [ [ \leaf ] [ [ $T_k$ ] [ $\circ$ [ $X$ ] ] ] ] ] ] ] ]} &
			\pebbleTree{[ [ [ $T_1$ ] [ [ \leaf ] [ [ \leaf ] [ [ \leaf ] [ [ $T_k$ ] [ [ $Y$ ] [ $Z$ ] ] ] ] ] ] ] ]}
			\\[-.3cm]
			&&&&
			\\
			\hline
			\raisebox{-1.8cm}{$\psi(T, L, R)$} &
			\pebbleTree{[ [ $\circ$ [ [ $X$ ] [ [ $T_1$ ] [ [ \leaf ] [ [ \leaf ] [ [ \leaf ] [ [ $T_k$ ] [ $\bullet$ [ $R$ ] ] ] ] ] ] ] ] ] ]} &
			\pebbleTree{[ [ [ $Y$ ] [ [ $T_1$ ] [ [ \leaf ] [ [ \leaf ] [ [ \leaf ] [ [ $T_k$ ] [ [ $Z$ ] [ $\bullet$ [ $R$ ] ] ] ] ] ] ] ] ] ]} &
			\pebbleTree{[ [ $\circ$ [ [ $X$ ] [ [ $T_1$ ] [ [ \leaf ] [ [ \leaf ] [ [ \leaf ] [ [ $T_k$ ] [ $\circ$ [ $R_{\yinyang}$ ] ] ] ] ] ] ] ] ] ]} &
			\pebbleTree{[ [ [ $Y$ ] [ [ $T_1$ ] [ [ \leaf ] [ [ \leaf ] [ [ \leaf ] [ [ $T_k$ ] [ [ $Z$ ] [ $\circ$ [ $R_{\yinyang}$ ] ] ] ] ] ] ] ] ] ]}
			\\
			\hline
		\end{tabular}
	}
	\vspace{.1cm}
	\centerline{
		\begin{tabular}{|c|c|c|c|c|}
			\hline
			\raisebox{-1.3cm}{$T$} &
      			\multicolumn{2}{c|}{\pebbleTree{[ [ $\circ$ [ [ $\bullet$ [ $\circ$ [ \leaf ] ] ] [ \leaf ] ] ] ]}} &
      			\multicolumn{2}{c|}{\pebbleTree{[ [ $\circ$ [ [ $\circ$ [ $\bullet$ [ \leaf ] ] ] [ \leaf ] ] ] ]}}
			\\
			\hline
			\raisebox{-1.3cm}{$R$} &
			\pebbleTree{[ [ [ [ [ [ [ $\circ$ [ $X$ ] ] [ $T_k$ ] ] [ \leaf ] ] [ \leaf ] ] [ \leaf ] ] [ $T_1$ ] ] ]} &
			\pebbleTree{[ [ [ [ [ [ [ [ $Z$ ] [ $Y$ ] ] [ $T_k$ ] ] [ \leaf ] ] [ \leaf ] ] [ \leaf ] ] [ $T_1$ ] ] ]} &
			\pebbleTree{[ [ [ [ [ [ [ $\circ$ [ $X$ ] ] [ $T_k$ ] ] [ \leaf ] ] [ \leaf ] ] [ \leaf ] ] [ $T_1$ ] ] ]} &
			\pebbleTree{[ [ [ [ [ [ [ [ $Z$ ] [ $Y$ ] ] [ $T_k$ ] ] [ \leaf ] ] [ \leaf ] ] [ \leaf ] ] [ $T_1$ ] ] ]}
			\\[-.3cm]
			&&&&
			\\
			\hline
			\raisebox{-1.8cm}{$\psi(T, L, R)$} &
			\pebbleTree{[ [ $\circ$ [ [ [ [ [ [ [ $\bullet$ [ $L$ ] ] [ $T_k$ ] ] [ \leaf ] ] [ \leaf ] ] [ \leaf ] ] [ $T_1$ ] ] [ $X$ ] ] ] ]} &
			\pebbleTree{[ [ [ [ [ [ [ [ [ $\bullet$ [ $L$ ] ] [ $Z$ ] ] [ $T_k$ ] ] [ \leaf ] ] [ \leaf ] ] [ \leaf ] ] [ $T_1$ ] ] [ $Y$ ] ] ]} &
			\pebbleTree{[ [ $\circ$ [ [ [ [ [ [ [ $\circ$ [ $L_{\yinyang}$ ] ] [ $T_k$ ] ] [ \leaf ] ] [ \leaf ] ] [ \leaf ] ] [ $T_1$ ] ] [ $X$ ] ] ] ]} &
			\pebbleTree{[ [ [ [ [ [ [ [ [ $\circ$ [ $L_{\yinyang}$ ] ] [ $Z$ ] ] [ $T_k$ ] ] [ \leaf ] ] [ \leaf ] ] [ \leaf ] ] [ $T_1$ ] ] [ $Y$ ] ] ]}
			\\
			\hline
		\end{tabular}
	}
	\caption{Bijection~$\psi$ sending a triple~$(T, L, R)$ of maximal pebble trees of~$\pebbleTrees[2][1,1]$, $\pebbleTrees[\ell][1,1]$ and $\pebbleTrees[r][1,1]$ respectively to a maximal pebble tree of~$\pebbleTrees[\ell + r][1,1]$.}
	\label{fig:bijection}
\end{figure}
\end{example}

%\begin{problem}
%\label{prob:CatalanAgain}
%Give a (bijective) combinatorial explanation for the appearance of the Catalan numbers in the generating function
%\[
%\maximalPebbleTreeGeneratingFunction[1,1](x) = x + 10 x^2 + 200 x^3 + 5000 x^4 + 140000 x^5 + 4200000 x^6 +  \dots
%\]
%\end{problem}

%%%%%%%%%%

\subsection{Enumeration of all pebble trees}
\label{subsec:enumerationPT}

We now consider all pebble trees.

\begin{definition}
\label{def:generatingFunctionPT}
For~$\ell, n, u, b \in \N$, we denote by $p_{\ell,n}^{b,u}$ the number of pebble tree with $\ell$ leaves, $n$ nodes, $b$ balanced and $u$ unbalanced colors.
We consider the generating function
\[
\pebbleTreeGeneratingFunction(x,y) \eqdef \sum_{\ell = 1}^\infty \sum_{n = 0}^\infty p_{\ell,n}^{b,u} \, x^\ell \, y^n.
\]
\end{definition}

\begin{proposition}
\label{prop:generatingFunctionPT}
The generating functions~$\pebbleTreeGeneratingFunction(x,y)$ satisfy the functional equations
\[
\pebbleTreeGeneratingFunction(x,y) = x \cdot \delta_{b = 0} + y \cdot \sum_{s =1}^b \binom{b}{s} \cdot \pebbleTreeGeneratingFunction[b-s,u+s](x,y) + y \cdot \!\!\!\!\! \sum_{\substack{d \ge 2 \\ X_1, \dots, X_b \\ Y_1, \dots, Y_u}}  \prod_{k = 1}^d \pebbleTreeGeneratingFunction[b_k, u_k](x,y)
\]
where~$\delta$ is the Kronecker delta, where each $X_i$ ranges among arbitrary subsets of~$[d]$ while each~$Y_j$ ranges among strict subsets of~$[d]$, and where~$b_k \eqdef |\set{i \in [b]}{k \in X_i}| + |\set{j \in [u]}{k \in Y_j}|$ and $u_k \eqdef b+u-b_k$ for any~$k \in [d]$.
Hence, $\pebbleTreeGeneratingFunction(x)$ is algebraic for any~$b,u \in \N$.
\end{proposition}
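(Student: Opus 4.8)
The plan is to prove the functional equations by a structural decomposition of a pebble tree $T \in \pebbleTrees$ (with $\ell$ leaves, $b$ balanced and $u$ unbalanced colors) according to its root, in the same spirit as the proof of \cref{prop:generatingFunctionsMPT}, recalling that $x$ marks leaves and $y$ marks nodes while pebbles are unmarked. There are three cases. If $T$ is a single leaf, then $\ell = 1$ and every color has pebble default~$1$, which forces $b = 0$; this contributes the term $x \cdot \delta_{b=0}$. If the root of $T$ is unary, it carries a nonempty set $P$ of pebbles, at most one of each color; since the root's $\pebbleColor$-pebbles are not counted in the child subtree~$S$, the $\pebbleColor$-default of~$S$ exceeds that of~$T$ by the number of $\pebbleColor$-pebbles at the root, and as all defaults lie in $\{0,1\}$ the colors in~$P$ must be balanced for~$T$ and become unbalanced in~$S$. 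Hence~$S$ lies in a family isomorphic to $\pebbleTrees[\ell][b-s,u+s]$ with $s \eqdef |P| \in [b]$ and $\binom{b}{s}$ choices of~$P$; marking the new root by~$y$ gives the term $y \cdot \sum_{s=1}^b \binom{b}{s} \cdot \pebbleTreeGeneratingFunction[b-s,u+s](x,y)$.

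The third case, where the root has $d \ge 2$ children $C_1, \dots, C_d$, will be the heart of the argument. For each color I would record the set of children in which that color is balanced. Using the observation (from the remarks following \cref{def:pebbleTree}) that the number of $\pebbleColor$-unbalanced children of a node equals its number of $\pebbleColor$-pebbles, plus one if the node itself is $\pebbleColor$-unbalanced, this set is an \emph{arbitrary} subset $X_i \subseteq [d]$ for each color $i$ balanced for~$T$, and a \emph{strict} subset $Y_j \subsetneq [d]$ for each color $j$ unbalanced for~$T$ (their complements recording how many pebbles of the corresponding color sit at the root). Conversely, any tuple $(X_1, \dots, X_b, Y_1, \dots, Y_u)$ of this shape, together with a choice of subtrees, reassembles into a valid pebble tree in which $C_k$ is balanced precisely in the colors $\set{i \in [b]}{k \in X_i} \cup \set{j \in [u]}{k \in Y_j}$, so $C_k$ carries $b_k$ balanced and $u_k = b+u-b_k$ unbalanced colors, and the leaves of~$T$ split among the~$C_k$; marking the new root by~$y$ then produces the last term, and summing the three contributions gives the functional equation. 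The hard part will be this bijective bookkeeping: I must check both that no constraint couples different $X_i$'s and $Y_j$'s to each other, and that \emph{every} admissible tuple, paired with arbitrary subtrees of the prescribed color types, genuinely satisfies the local balance condition of \cref{def:pebbleTree} at the new root and at all inherited subtrees — which is exactly where the defect identities recalled after \cref{def:pebbleTree} are used.

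For the algebraicity statement I would fix $N \eqdef b+u$ and view the above equations, for all $(b,u)$ with $b+u = N$, as a closed system in the $N+1$ series $\pebbleTreeGeneratingFunction[c,N-c](x,y)$, $0 \le c \le N$; the only non-polynomial ingredient is the sum over $d \ge 2$. To remove it I would group the $d$ children by the color pattern each realizes: a child with balanced-color set $\sigma \subseteq [b+u]$ contributes the factor $\pebbleTreeGeneratingFunction[|\sigma|,N-|\sigma|](x,y)$, subject only to the requirement that each color unbalanced for~$T$ be omitted from at least one of the $d$ patterns. Summing over $d \ge 2$ and then applying inclusion--exclusion over the set of unbalanced colors forced into every pattern rewrites the $d$-sum as $\sum_{t=0}^{u} (-1)^t \binom{u}{t} \cdot W_t^2/(1-W_t)$, where $W_t \eqdef \sum_{c=0}^{N-t} \binom{N-t}{c} \cdot \pebbleTreeGeneratingFunction[c+t,N-t-c](x,y)$ is an integer linear combination of the unknowns. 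Each equation of the system is then a rational function of the $\pebbleTreeGeneratingFunction[c,N-c]$ over $\mathbb{Q}[x,y]$, and clearing the denominators $\prod_t (1-W_t)$ — which are units, the $W_t$ having no constant term — turns it into a polynomial system. This system is well founded, since computing its right-hand sides truncated at $y$-degree~$n$ uses only the unknowns truncated at $y$-degree~$n-1$; hence it has a unique formal power series solution, the tuple $(\pebbleTreeGeneratingFunction[c,N-c])_c$, and the standard result that a well-founded polynomial system of equations has algebraic solutions \cite{FlajoletSedgewick} shows that each $\pebbleTreeGeneratingFunction[b,u](x,y)$ is algebraic over $\mathbb{Q}(x,y)$. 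The resummation over~$d$ and the inclusion--exclusion are routine once the decomposition is set up.
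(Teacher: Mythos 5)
Your proposal is correct and follows essentially the same route as the paper: the same decomposition of a pebble tree at its root (a leaf, a unary pebbled root whose pebble colors switch from balanced to unbalanced, or a root with $d \ge 2$ children whose per-color balanced-children sets give the arbitrary $X_i$ and strict $Y_j$), translated by standard generatingfunctionology, with algebraicity coming from the closed system of equations at fixed $b+u$. The only difference is that you carry out details the paper treats as routine — in particular the inclusion--exclusion resummation over $d$ that converts the infinite sum into a rational and then polynomial system, and the well-foundedness argument — which makes the algebraicity step more explicit but not essentially different.
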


\begin{proof}
A pebble tree of~$\pebbleTrees[{[s,t]}][B,U]$ is:
\begin{itemize}
\item either a leaf if~$s = t$ and~$B = \varnothing$,
\item or a unary node with some pebbles colored by a non-empty subset~$S \subseteq B$ (one pebble of each color in~$S$) and a child in~$\pebbleTrees[{[s,t]}][B \ssm S,U \cup S]$,
\item or a node with some pebbles and~$d \ge 2$ children in~$\pebbleTrees[{[s_1,t_1]}][B_1,U_1], \dots, \pebbleTrees[{[s_d,t_d]}][B_d,U_d]$ respectively, for some~$s = s_1 \le t_1 = s_2 \le \dots \le t_d = t$ and~$B_i \subseteq B$ and~$U_i \subseteq U$ for all~$i \in [d]$, such that~$\bigcap_{i \in [d]} U_i = \varnothing$.
\end{itemize}
The functional equations for~$\pebbleTreeGeneratingFunction(x)$ are immediate consequences of this structural decomposition by classical generatingfunctionology~\cite{FlajoletSedgewick}.
The algebraicity of~$\pebbleTreeGeneratingFunction(x)$ follows as it belongs to a system of finitely many polynomial equations (all equations for a given sum~$b+u$).
\end{proof}

\begin{example}
When~$b = u = 0$, we recover the functional equation
\[
\pebbleTreeGeneratingFunction[0,0](x,y) = x + \frac{y \cdot \pebbleTreeGeneratingFunction[0,0](x,y)^2}{1-\pebbleTreeGeneratingFunction[0,0](x,y)}
\]
which yield the classical Schr\"oder generating function
\begin{align*}
\pebbleTreeGeneratingFunction[0,0](x,y) 
& = \frac{y \big( x+y-\sqrt{x^2-2xy-4xy^2+y^2} \big)}{2(1+y)} \\
& = x + x^2 y + x^3 (y + 2 y^2) + x^4 (y + 5 y^2 + 5 y^3) + x^5 (y + 9 y^2 + 21 y^3 + 14 y^4) + \dots
\end{align*}
%sage: pebble_trees_generating_function(x, y, 0, 0, 5)                                                                                                                                                       
%(14*y^4 + 21*y^3 + 9*y^2 + y)*x^5 + (5*y^3 + 5*y^2 + y)*x^4 + (2*y^2 + y)*x^3 + x^2*y + x
\end{example}

The expansions of the generating functions~$\pebbleTreeGeneratingFunction[b,u](x,1)$ and~$\pebbleTreeGeneratingFunction[b,u](x,y)$ for~$b + u \le 2$ can be found in \cref{subsec:generatingFunctions}.
Finally, we observe that the evaluations of~$\pebbleTreeGeneratingFunction(x,y)$ at~$y = 1$ and~$y = -1$ have a geometric meaning.

\begin{proposition}
For any~$b,u \in \N$, the evaluation~$\pebbleTreeGeneratingFunction(x,1)$ is the generating function of the total number of faces of the pebble tree polytope~$\pebbleTreePolytope$, and 
\[
\pebbleTreeGeneratingFunction(x,-1) = \frac{x}{(-1)^b + (-1)^u x}.
\]
\end{proposition}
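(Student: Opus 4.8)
The plan is to handle the two assertions separately: the first is a direct translation of the earlier structural theorems, and the second follows from the Euler--Poincar\'e relation together with the summation of a geometric series.

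For the statement about $\pebbleTreeGeneratingFunction(x,1)$, I would argue as follows. By \cref{thm:pebbleTreePolytope1}, the pebble tree fan~$\pebbleTreeFan$ is the normal fan of the pebble tree polytope~$\pebbleTreePolytope$, so by \cref{thm:pebbleTreeFan} the face poset of~$\pebbleTreePolytope[\ell][b,u]$, in the convention where~$\pebbleTreePolytope[\ell][b,u]$ itself is the unique maximal face and the empty face is excluded, is isomorphic to the pebble tree contraction poset~$\pebbleTreePoset[\ell][b,u]$, whose ground set is~$\pebbleTrees[\ell][b,u]$. Writing~$n(T)$ for the number of nodes of a pebble tree~$T$, it follows that the total number of faces of~$\pebbleTreePolytope[\ell][b,u]$ equals $\lvert \pebbleTrees[\ell][b,u] \rvert = \sum_{n \ge 0} p_{\ell,n}^{b,u}$, which by \cref{def:generatingFunctionPT} is precisely the coefficient of~$x^\ell$ in~$\pebbleTreeGeneratingFunction(x,1)$. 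This settles the first claim.

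For the evaluation at~$y = -1$, I would first compute, for each fixed~$\ell$, the signed count $\sum_{T \in \pebbleTrees[\ell][b,u]} (-1)^{n(T)}$. Under the bijection above, a pebble tree~$T$ corresponds to the face of~$\pebbleTreePolytope[\ell][b,u]$ whose normal cone is~$\Cone(T)$; since the vectors~$\ray[S]$ vanish when~$S$ is a leaf or the whole tree, and the remaining~$n(T) - 1$ of them (one for each subtree rooted at a non-root node) are linearly independent by \cref{thm:pebbleTreeFan}, this normal cone has dimension~$n(T) - 1$, so the corresponding face has dimension $\dim \pebbleTreePolytope[\ell][b,u] - (n(T) - 1) = \ell(1+b+u) - u - 1 - n(T)$. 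Plugging this into the Euler--Poincar\'e relation $\sum_{\varnothing \ne F} (-1)^{\dim F} = 1$, where the sum runs over all nonempty faces of~$\pebbleTreePolytope[\ell][b,u]$, and using~$(-1)^{-n} = (-1)^n$, yields
\[
\sum_{T \in \pebbleTrees[\ell][b,u]} (-1)^{n(T)} = (-1)^{\ell(1+b+u)-u-1}.
\]
The only degenerate situation here is~$\ell = 1$, $b = 0$, where~$\pebbleTrees[1][0,u]$ reduces to the single bare leaf (with no node) and the identity reads~$1 = (-1)^0$; I would dispose of it by inspection.

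Finally I would sum over~$\ell$: by \cref{def:generatingFunctionPT} and the displayed identity,
\[
\pebbleTreeGeneratingFunction(x,-1) = \sum_{\ell \ge 1} (-1)^{\ell(1+b+u)-u-1} \, x^\ell = (-1)^{u+1} \cdot \frac{(-1)^{1+b+u} x}{1 - (-1)^{1+b+u} x} = \frac{(-1)^b x}{1 + (-1)^{b+u} x} = \frac{x}{(-1)^b + (-1)^u x},
\]
using~$(-1)^{u+1}(-1)^{1+b+u} = (-1)^b$ and then multiplying numerator and denominator by~$(-1)^b$. The substantive content --- that~$\pebbleTreeFan$ is a complete simplicial fan, that it is the normal fan of~$\pebbleTreePolytope$, and that its face poset is~$\pebbleTreePoset$ --- is already available from \cref{thm:pebbleTreeFan,thm:pebbleTreePolytope1}, so I expect the only real care points to be combinatorial bookkeeping: matching the ``total number of faces'' convention to the generating function (which records no empty tree but does record the corolla), reading off the face dimension correctly from~$n(T)$, and tracking parities in the geometric-series manipulation.
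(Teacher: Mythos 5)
Your proof is correct and follows essentially the same route as the paper: identify the pebble trees of~$\pebbleTrees[\ell][b,u]$ with the nonempty faces of~$\pebbleTreePolytope[\ell][b,u]$ via \cref{thm:pebbleTreeFan,thm:pebbleTreePolytope1}, read off the total face count at~$y=1$, and combine the Euler--Poincar\'e relation with a geometric series at~$y=-1$. Your dimension bookkeeping is in fact the accurate one --- a tree with~$n(T)$ nodes gives a normal cone of dimension~$n(T)-1$, hence a face of dimension~$\ell(1+b+u)-u-1-n(T)$ and the signed count~$(-1)^{\ell(1+b+u)-u-1}$ --- whereas the paper's proof states the face dimension as~$\ell+\ell b+\ell u-u-2-n$, an off-by-one slip (with the corresponding sign in its Euler identity) that your more careful treatment, including the degenerate case~$\ell=1$, $b=0$, silently repairs on the way to the same closed form.
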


\begin{proof}
By \cref{thm:pebbleTreePolytope1}, $p_{\ell,n}^{b,u}$ is the number of $(\ell + \ell b + \ell u - u - 2 - n)$-dimensional faces of the $(\ell + \ell b + \ell u - u - 2)$-dimensional pebble tree polytope~$\pebbleTreePolytope$.
This implies that
\begin{itemize}
\item $\sum_n p_{\ell,n}^{b,u}$ is the total number of faces of~$\pebbleTreePolytope$,
\item $\sum_n p_{\ell,n}^{b,u} (-1)^n = (-1)^{\ell + \ell b + \ell u - u - 2}$ by Euler's formula.
\end{itemize}
This immediately implies the statement.
\end{proof}

%%%%%%%%%%

\subsection{Expansions of generating functions}
\label{subsec:generatingFunctions}

Below are the expansions of the generating functions~$\maximalPebbleTreeGeneratingFunction[b,u](x)$, $\pebbleTreeGeneratingFunction[b,u](x,1)$ and~$\pebbleTreeGeneratingFunction[b,u](x,y)$ of \cref{def:generatingFunctionsMPT,def:generatingFunctionPT} for all $b + u \le 2$.

\para{$b = 0$ and $u = 0$}
\begin{align*}
\maximalPebbleTreeGeneratingFunction[0,0](x) & = x + x^2 + 2 x^3 + 5 x^4 + 14 x^5 + 42 x^6 + 132 x^7 + 429 x^8 + 1430 x^9 + 4862 x^10 + \dots \hspace{1.7cm} (\OEIS{A000108})
%sage: maximal_pebble_trees_generating_function(x, 0, 0, 12)                                                                                                                                               
%58786*x^12 + 16796*x^11 + 4862*x^10 + 1430*x^9 + 429*x^8 + 132*x^7 + 42*x^6 + 14*x^5 + 5*x^4 + 2*x^3 + x^2 + x
\\
\pebbleTreeGeneratingFunction[0,0](x,1) & = x + x^2 + 3 x^3 + 11 x^4 + 45 x^5 + 197 x^6 + 903 x^7 + 4279 x^8 + 20793 x^9 + 103049 x^{10} + \dots \hspace{.7cm} (\OEIS{A001003})
%sage: pebble_trees_generating_function(x, 1, 0, 0, 10)                                                                                                                                                    
%103049*x^10 + 20793*x^9 + 4279*x^8 + 903*x^7 + 197*x^6 + 45*x^5 + 11*x^4 + 3*x^3 + x^2 + x
\\
\pebbleTreeGeneratingFunction[0,0](x,y) & = x + x^2 y + x^3 (y + 2 y^2) + x^4 (y + 5 y^2 + 5 y^3) + x^5 (y + 9 y^2 + 21 y^3 + 14 y^4) + \dots
%sage: pebble_trees_generating_function(x, y, 0, 0, 5)                                                                                                                                                       
%(14*y^4 + 21*y^3 + 9*y^2 + y)*x^5 + (5*y^3 + 5*y^2 + y)*x^4 + (2*y^2 + y)*x^3 + x^2*y + x
\end{align*}

\para{$b = 1$ and $u = 0$}
\begin{align*}
\maximalPebbleTreeGeneratingFunction[1,0](x) & =  x + 3 x^2 + 16 x^3 + 105 x^4 + 768 x^5 + 6006 x^6 + 49152 x^7 + 415701 x^8 + 3604480 x^9 + \dots \hspace{.75cm} (\OEIS{A085614})
%sage: maximal_pebble_trees_generating_function(x, 1, 0, 9)                                                                                                                                                  
%3604480*x^9 + 415701*x^8 + 49152*x^7 + 6006*x^6 + 768*x^5 + 105*x^4 + 16*x^3 + 3*x^2 + x
\\
\pebbleTreeGeneratingFunction[1,0](x,1) & = x + 7 x^2 + 81 x^3 + 1151 x^4 + 18225 x^5 + 308519 x^6 + 5465313 x^7 + 100051903 x^8 + \dots
%sage: pebble_trees_generating_function(x, 1, 1, 0, 8)                                                                                                                                                       
%100051903*x^8 + 5465313*x^7 + 308519*x^6 + 18225*x^5 + 1151*x^4 + 81*x^3 + 7*x^2 + x
\\
\pebbleTreeGeneratingFunction[1,0](x,y) & = x y + x^2 (y + 3 y^2 + 3 y^3) + x^3 (y + 8 y^2 + 24 y^3 + 32 y^4 + 16 y^5) + \dots
%sage: pebble_trees_generating_function(x, y, 1, 0, 5)                                                                                                                                                       
%(768*y^9 + 3072*y^8 + 5248*y^7 + 4992*y^6 + 2880*y^5 + 1024*y^4 + 216*y^3 + 24*y^2 + y)*x^5 + (105*y^7 + 315*y^6 + 385*y^5 + 245*y^4 + 85*y^3 + 15*y^2 + y)*x^4 + (16*y^5 + 32*y^4 + 24*y^3 + 8*y^2 + y)*x^3 + (3*y^3 + 3*y^2 + y)*x^2 + x*y
\end{align*}

\para{$b = 0$ and $u = 1$}
\begin{align*}
\maximalPebbleTreeGeneratingFunction[0,1](x) & =  x + 2 x^2 + 10 x^3 + 64 x^4 + 462 x^5 + 3584 x^6 + 29172 x^7 + 245760 x^8 + 2124694 x^9 + \dots \hspace{.9cm} (\OEIS{A078531})
%sage: maximal_pebble_trees_generating_function(x, 0, 1, 9)                                                                                                                                                  
%2124694*x^9 + 245760*x^8 + 29172*x^7 + 3584*x^6 + 462*x^5 + 64*x^4 + 10*x^3 + 2*x^2 + x
\\
\pebbleTreeGeneratingFunction[0,1](x,1) & = x + 3 x^2 + 33 x^3 + 459 x^4 + 7185 x^5 + 120771 x^6 + 2129169 x^7 + 38843307 x^8 + \dots
%sage: pebble_trees_generating_function(x, 1, 0, 1, 8)                                                                                                                                                       
%38843307*x^8 + 2129169*x^7 + 120771*x^6 + 7185*x^5 + 459*x^4 + 33*x^3 + 3*x^2 + x
\\
\pebbleTreeGeneratingFunction[0,1](x,y) & = x + x^2 (y + 2 y^2) + x^3 (y + 7 y^2 + 15 y^3 + 10 y^4) + x^4 (y + 14 y^2 + 68 y^3 + 152 y^4 + 160 y^5 + 64 y^6) + \dots
%sage: pebble_trees_generating_function(x, y, 0, 1, 5)                                                                                                                                                       
%(462*y^8 + 1617*y^7 + 2331*y^6 + 1785*y^5 + 777*y^4 + 189*y^3 + 23*y^2 + y)*x^5 + (64*y^6 + 160*y^5 + 152*y^4 + 68*y^3 + 14*y^2 + y)*x^4 + (10*y^4 + 15*y^3 + 7*y^2 + y)*x^3 + (2*y^2 + y)*x^2 + x
\end{align*}

\para{$b = 2$ and $u = 0$}
\begin{align*}
\maximalPebbleTreeGeneratingFunction[2,0](x) & =  2 x + 24 x^2 + 496 x^3 + 12560 x^4 + 354048 x^5 + 10665088 x^6 + 336114176 x^7 + 10237958656 x^8 + \dots
%sage: maximal_pebble_trees_generating_function(x, 2, 0, 8)                                                                                                                                                  
%10237958656*x^8 + 336114176*x^7 + 10665088*x^6 + 354048*x^5 + 12560*x^4 + 496*x^3 + 24*x^2 + 2*x
\\
\pebbleTreeGeneratingFunction[2,0](x,1) & = 3 x + 115 x^2 + 7431 x^3 + 587591 x^4 + 51702219 x^5 + 4860786491 x^6 + 478068368655 x^7 + \dots
%sage: pebble_trees_generating_function(x, 1, 2, 0, 7)                                                                                                                                                       
%478068368655*x^7 + 4860786491*x^6 + 51702219*x^5 + 587591*x^4 + 7431*x^3 + 115*x^2 + 3*x
\\
\pebbleTreeGeneratingFunction[2,0](x,y) & = x (y + 2 y^2) + x^2 (y + 9 y^2 + 33 y^3 + 48 y^4 + 24 y^5) + \dots
%sage: pebble_trees_generating_function(x, y, 2, 0, 5)                                                                                                                                                       
%(354048*y^14 + 2301312*y^13 + 6681696*y^12 + 11434896*y^11 + 12821340*y^10 + 9905958*y^9 + 5402712*y^8 + 2097096*y^7 + 576936*y^6 + 110670*y^5 + 14324*y^4 + 1176*y^3 + 54*y^2 + y)*x^5 + (12560*y^11 + 62800*y^10 + 135540*y^9 + 165360*y^8 + 125525*y^7 + 61575*y^6 + 19665*y^5 + 4025*y^4 + 505*y^3 + 35*y^2 + y)*x^4 + (496*y^8 + 1736*y^7 + 2460*y^6 + 1810*y^5 + 740*y^4 + 168*y^3 + 20*y^2 + y)*x^3 + (24*y^5 + 48*y^4 + 33*y^3 + 9*y^2 + y)*x^2 + (2*y^2 + y)*x
\end{align*}

\para{$b = 1$ and $u = 1$}
\begin{align*}
\maximalPebbleTreeGeneratingFunction[1,1](x) & =  x + 10 x^2 + 200 x^3 + 5000 x^4 + 140000 x^5 + 4200000 x^6 + 132000000 x^7 + \dots \hspace{2.5cm} (\OEIS{A156275})
%sage: maximal_pebble_trees_generating_function(x, 1, 1, 8)                                                                                                                                                  
%4290000000*x^8 + 132000000*x^7 + 4200000*x^6 + 140000*x^5 + 5000*x^4 + 200*x^3 + 10*x^2 + x
\\
\pebbleTreeGeneratingFunction[1,1](x,1) & = x + 33 x^2 + 2061 x^3 + 160797 x^4 + 14049369 x^5 + 1315182201 x^6 + 128977070373 x^7 + \dots 
%sage: pebble_trees_generating_function(x, 1, 1, 1, 8)                                                                                                                                                       
%13079605193589*x^8 + 128977070373*x^7 + 1315182201*x^6 + 14049369*x^5 + 160797*x^4 + 2061*x^3 + 33*x^2 + x
\\
\pebbleTreeGeneratingFunction[1,1](x,y) & = x y + x^2 (y + 7 y^2 + 15 y^3 + 10 y^4) + x^3 (y + 18 y^2 + 124 y^3 + 412 y^4 + 706 y^5 + 600 y^6 + 200 y^7) + \dots
%sage: pebble_trees_generating_function(x, y, 1, 1, 4)                                                                                                                                                       
%(5000*y^10 + 22500*y^9 + 43050*y^8 + 45675*y^7 + 29393*y^6 + 11795*y^5 + 2923*y^4 + 427*y^3 + 33*y^2 + y)*x^4 + (200*y^7 + 600*y^6 + 706*y^5 + 412*y^4 + 124*y^3 + 18*y^2 + y)*x^3 + (10*y^4 + 15*y^3 + 7*y^2 + y)*x^2 + x*y
\end{align*}

\para{$b = 0$ and $u = 2$}
\begin{align*}
\maximalPebbleTreeGeneratingFunction[0,2](x) & =  x + 6 x^2 + 112 x^3 + 2728 x^4 + 75360 x^5 + 2242304 x^6 + 70084864 x^7 + 2268770688 x^8 + \dots
%sage: maximal_pebble_trees_generating_function(x, 0, 2, 8)                                                                                                                                                  
%2268770688*x^8 + 70084864*x^7 + 2242304*x^6 + 75360*x^5 + 2728*x^4 + 112*x^3 + 6*x^2 + x
\\
\pebbleTreeGeneratingFunction[0,2](x,1) & = x + 13 x^2 + 765 x^3 + 58297 x^4 + 5031129 x^5 + 467426661 x^6 + 45606874581 x^7 + \dots
%sage: pebble_trees_generating_function(x, 1, 0, 2, 8)                                                                                                                                                       
%4608131362129*x^8 + 45606874581*x^7 + 467426661*x^6 + 5031129*x^5 + 58297*x^4 + 765*x^3 + 13*x^2 + x
\\
\pebbleTreeGeneratingFunction[0,2](x,y) & = x + x^2 (y + 6 y^2 + 6 y^3) + x^3 (y + 17 y^2 + 101 y^3 + 254 y^4 + 280 y^5 + 112 y^6) + \dots
%sage: pebble_trees_generating_function(x, y, 0, 2, 4)                                                                                                                                                       
%(2728*y^9 + 10912*y^8 + 18014*y^7 + 15850*y^6 + 8020*y^5 + 2354*y^4 + 386*y^3 + 32*y^2 + y)*x^4 + (112*y^6 + 280*y^5 + 254*y^4 + 101*y^3 + 17*y^2 + y)*x^3 + (6*y^3 + 6*y^2 + y)*x^2 + x
\end{align*}

%%%%%%%%%%%%%%%%%%%%%%%%%%%%%%%%%%%%%%

\section*{Aknowledgements}

I am grateful to Doriann Albertin for discussions around \cref{exm:10Catalan} and for a very constructive proofreading, to Wenjie Fang for pointing out \cref{rem:CoriJacquardSchaeffer} and for discussions on \cref{prop:generatingFunctionsMPT,prop:generatingFunctionPT}, to Guillaume Laplante-Anfossi for motivating me to find polytopal realizations of the assocoipahedra, to Thibaut Mazuir for algebraic discussions on the assocoipahedra, and to Anna de Mier, Jordi Castellví Foguet and Clément Requilé for discussions on \cref{exm:10Catalan} during the \href{https://gapcomb.upc.edu/en/seminar-en/gapcomb-workshop/3rd-gapcomb-workshop}{3rd Workshop on Geometric, Algebraic and Probabilistic Combinatorics} in Monserrat on July 2022.
This research benefited from computations and tests done using the open-source mathematical software \texttt{Sage}~\cite{Sage} and its combinatorics features developed by the \texttt{Sage-combinat} community~\cite{SageCombinat}, and from the Online Encyclopedia of Integer Sequences~\cite{OEIS}.

%%%%%%%%%%%%%%%%%%%%%%%%%%%%%%%%%%%%%%

\bibliographystyle{alpha}
\bibliography{pebbleTrees}
\label{sec:biblio}

\end{document}